\numberwithin{equation}{section}
\newtheorem{theorem}{Theorem}
\newtheorem{lemma}{Lemma}
\newtheorem{corollary}{Corollary}
\newtheorem{Proposition}{Proposition}
\newtheorem{RHP}{RHP}
\DeclareMathOperator*{\res}{Res}
\begin{document}
	\title{\Large The   Cauchy problem of the Camassa-Holm equation in a  weighted Sobolev space: Long-time and Painlev\'e  asymptotics}
	\author{\Large Kai Xu$^1$, \  Yiling Yang$^1$ and  \  Engui Fan$^{1}$}
	\footnotetext[1]{ \  School of Mathematical Sciences  and Key Laboratory for Nonlinear Science, Fudan University, Shanghai 200433, P.R. China.}
	\footnotetext[2]{ \   Email address:\ Kai XU: \ 22110180047@fudan.edu.cn; Yiling YANG: 19110180006@fudan.edu.cn; Engui FAN:  faneg@fudan.edu.cn}
	\date{ }
	\maketitle
	
	\begin{abstract}
		\baselineskip=17pt
		
	Based on  the $\overline\partial$-generalization of the Deift-Zhou  steepest descent method,  we extend  the long-time and Painlev\'e  asymptotics  for     the  Camassa-Holm  (CH)  equation to
the solutions with initial data in   a  weighted Sobolev space $  H^{4,2}(\mathbb{R})$.
 With   a new scale $(y,t)$  and  a  RH problem associated
with the initial value problem,
		we derive   different long time asymptotic expansions for  the solutions of  the CH equation    in  different space-time solitonic regions.
 The  half-plane  $\{ (y,t): -\infty <y<\infty, \ t> 0\}$  is  divided   into four asymptotic regions:
		1. Fast decay region, $ y/t \in(-\infty,-1/4)$   with   an error   $\mathcal{O}(t^{-1/2})$;  2. Modulation-solitons region, $y/t \in(2,+\infty)$,   the  result can be
  characterized with  an modulation-solitons with   residual error    $\mathcal{O}(t^{-1/2 })$; 3. Zakhrov-Manakov  region,
		$y/t \in(0,2)$ and $y/t \in(-1/4,0)$.
		 The   asymptotic  approximations  is  characterized  by  the dispersion term  with residual error
		$\mathcal{O}(t^{-3/4})$;  4.  Two transition regions,  $|y/t|\approx 2$ and $|y/t| \approx -1/4$, the results   are  describe by the solution of Painlev\'e II equation with error order  $\mathcal{O}(t^{-1/2})$.\\[6pt]
		\noindent {\bf Keywords:}   Camassa-Holm   equation,  Riemann-Hilbert problem,    $\overline\partial$-steepest descent method, long time asymptotics, Painlev\'e II equation.\\[6pt]
		\noindent {\bf MSC 2020:} 35Q51; 35Q15; 37K15; 35C20.
		
	\end{abstract}
	
	\baselineskip=17pt
	
	\newpage
	
	\tableofcontents
	
	\newpage
	
	\section {Introduction}
	\quad
	In this paper, we   study the long time asymptotic behavior for  the solution  to  the Cauchy problem   of the  Camassa-Holm (CH) equation in a weighted Sobolev
space
	\begin{align}
		&u_{t}-u_{txx}+2\omega u_{x}+3uu_{x}=2u_{x}u_{xx}+uu_{xxx},  \label{ch} \\
		&u(x,t= 0)=u_{0}(x),\label{initial}
	\end{align}	
	where $u_{0} \in H^{4,2}(\mathbb{R})$ and  $\omega$ is a positive constant.
	The CH equation (\ref{ch}) was derived as a model for unidirectional propagation of small amplitude shallow water waves  by Camassa and Holm    \cite{Holm1},
	but already appeared earlier in a list by Fuchssteiner
	and Fokas \cite{Fuchssteiner1}.  It also
	arises in the study of the propagation of axially symmetric waves in hyperelastic
	rods \cite{Constantin4,Dai}  and its high-frequency limit models nematic liquid crystals \cite{Hunter,Constantin5}.
	It  has   attracted considerable interest and been studied extensively  due to their   rich mathematical structure and
	remarkable properties. For example,
	the CH equation (\ref{ch})  is an
	infinite-dimensional Hamiltonian system that is completely integrable \cite{Constantin0,Constantin1}.
	The CH equation admits non-smooth peakon-type  solitary or periodic  traveling wave solutions and it
	was shown that these solutions are orbitally stable \cite{Constantin3,Lenells1}.  It has been shown that the solitary waves of CH equation
	are smooth if  $\omega > 0 $   \cite{Johnson,Parker1,Parker2,Parker3}  and peaked   weak solutions if $\omega = 0 $ \cite{Holm1,Beals1,Beals2,Constantin6} .
	The  stability of   peakons  and orbital  stability  of solitary wave solution  for the  CH equation were further shown by Constantin \cite{Constantin2,Constantin9}.
	The global well-posedness for  the  Cauchy problem  of the  CH equation (\ref{ch}) was  studied  \cite{Xin1,Xin2,Xin3}.
	The  algebro-geometric quasiperiodic solutions were constructed by using algebro-geometric method  \cite{Fritz1,Qiao3}.
	With the aid of reciprocal transformation, Darboux  transformation  and
	multi-soliton solutions  for the CH equation were given \cite{Li}.   The inverse spectral transform for the conservative CH
	equation was studied \cite{Eckhardt0, Eckhardt1, Eckhardt2}.

    The CH equation admits a Lax pair which
	allows using the IST method to study its  initial value problem  with $\omega > 0 $ \cite{Constantin1,Constantin7,Kaup,Lenells2}.
	Boutet  de Monvel et al. first investigated the  related Riemann-Hilbert   problem  (RH problem or RHP)
 for the CH equation  \cite{Monvel1, Monvel2}.
   They  further   study  long-time asymptotics  and  Painlev\'e-type asymptotics for CH equation on the line and  half-line
	\cite{CH,CH1,Monvel22} by using the  nonlinear steepest descent method.
 Minakov used this method  to  obtain  long-time  for    the CH  equation with step-like initial data  \cite{Minakov}.
The nonlinear steepest descent method developed by  Deift and Zhou \cite{RN6} is an   effective tool to  rigorously obtain the long-time asymptotics behavior of
other integrable 	systems \cite{RN9,RN10,Grunert2009,MonvelCH,xu2015,xusp,Geng3,XF2020,Xu2013,HG2009}.

The aim of  our  paper is to extend the asymptotic results in  \cite{CH,CH1,Monvel22}     to solutions
with initial data in   lower regularity spaces.  The long time and Painlesv\'e  asymptotic behavior   for the solutions of the Cauchy    problem of  the
  (\ref{ch})-(\ref{initial}) is established in a weighted Sobolev  space $H^{4,2}(\mathbb{R})$.
	Our   results are different from those in \cite{CH,CH1,Monvel22} in three  aspects.
Firstly,  in order   to extend    the Schwartz initial data to    lower regularity  weighted  Sobolev  initial data $u_0  \in H^{4,2}(\mathbb{R})$,
we apply the $\overline\partial$-generalization of the steepest descent method  proposed by
	McLaughlin and   Miller  \cite{MandM2006,MandM2008}.  In recent years, this method
	 has been   successfully  used    to investigate  longtime asymptotics,  soliton
	resolution and  asymptotic stability of N-soliton solutions to
	integrable systems in a weighted Sobolev space
	\cite{DandMNLS,fNLS,Liu3,SandRNLS,YF1,YF3,YF4}.
In addition,  the initial data   considered   in our paper  allows    presence of    discrete spectrum.
  Secondly,  although a   row vector RH problem  was    already constructed in   \cite{CH},
which however is not suitable  for  asymptotic analysis  by applying  $\overline\partial$-steepest descent method.  In our paper we re-construct a new matrix RH problem associated with the Cauchy problem (\ref{ch})-(\ref{initial}).
 Thirdly, according to the number of phase points on the jump contour,   we  present a  complete classification of asymptotic regions   by   dividing
     the whole  half-plane   $(y,t)$   into  four   asymptotic regions, in which
    different leading  order  asymptotic approximations   for the  solutions of  the Cauchy problem (\ref{ch})-(\ref{initial}) are   obtained respectively
	(see Figure \ref{result1}).
	

	\begin{figure}
		\begin{center}
			\begin{tikzpicture}
				\draw[yellow!20, fill=yellow!20](0,0)--(4,0)--(4,2)--(0, 2);
                \draw[green!20, fill=green!20](0,0 )--(-1,3)--(-4,3)--(-4,0);

				\draw[orange!20, fill=orange!20](0,0 )--(0,3)--(4,3)--(4,2);
				\draw[orange!20, fill=orange!20](0,0)--(-1,3)--(0,3)--(0,0);
				\draw [blue!20, fill=blue!20] (0,0)--(4,1.8)--(4,2.2)--(0,0);
				\draw[blue!20, fill=blue!20] (0,0)--(-0.8,3)--(-1.2,3)--(0,0);
				\draw [ -> ] (-4.6,0)--(4.6,0);
				\draw [ -> ](0,0)--(0,3.9);
				\draw [red,thick  ](0,0 )--(4,2);
				\draw [red,thick  ](0,0 )--(-1,3);
				\node    at (0,-0.3)  {$0$};
				\node    at (5,0)  {y};
				\node    at (0,4.2)  {t};
				\node  [below]  at (1.5,2.3) {\scriptsize III. Zakhrov-Manakov};
				\node  [below]  at (2.9,0.7) {\scriptsize I. Modulation-Solitons};
				\node  [below]  at (-2.1,1.6) {\scriptsize IV. Fast-decay};
				\node  [below]  at (-1,3.8) {\scriptsize $ \xi=-1/4 $};
				\node  [below]  at (4.7,2.5) {\scriptsize $  \xi=2 $};
				\node  [below]  at (5,2.1) {\scriptsize II. Painlev\'e};
				\node  [below]  at (-1,3.4) {\scriptsize II. Painlev\'e};
			\end{tikzpicture}
		\end{center}
		\caption{\footnotesize  Asymptotic approximations    of the  CH equation  in  different space-time  solitonic regions,
			where without stationary  phase points  in yellow  region;  four and  two  stationary   phase points in  green and  orange   regions   respectively;  Painlev\'e  asymptotics  in two  blue  regions
  }
		\label{result1}
	\end{figure}
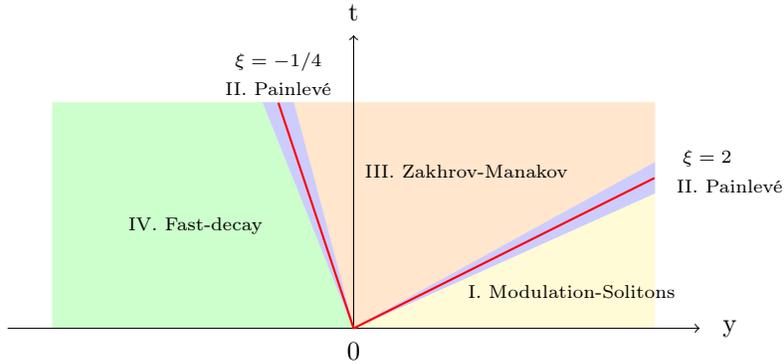

	Our paper is arranged as follows.  In section 2,  under weighted Sobolev initial data $u_0\in  H^{4,2}(\mathbb{R})$,
 we carry out the direct  scattering transform to establish
RH problem associated with the Cauchy problem (\ref{ch})-(\ref{initial}),  which will be used
	to analyze   long-time asymptotics  of the CH equation.
	In section 3,   we  present long-time asymptotics for the CH equation
	in the region I and IV in which the jump contours admit   no phase points.
	In this case, the main contribution to the asymptotic expansion comes from discrete spectrum and a $\bar\partial$-equation.
	In section 4,   we  present long-time asymptotics for the CH equation
	in the regions III.   The main contribution to leading term comes from the  jump contours near   two and four  phase points  respectively.
In section 5, we  present  the Painlev\'e  asymptotics for the CH equation  in  two   transition regions II. \vspace{6mm}

	\section {Inverse scattering transform  and the RH problem}\label{sec2}

In this section, we carry out the direct   scattering transform to establish
RH problem associated with the Cauchy problem (\ref{ch})-(\ref{initial}) with   weighted Sobolev initial data $u_0\in  H^{4,2}(\mathbb{R})$.

\subsection{Spectral analysis }

It is known that if $m(x,0)+\omega>0$ for all $x$,  then $m(x,t)$ exists for all t>0; moreover, $m(x,t)+\omega>0$,
	which justifies the equivalent form of the CH equation \cite{Monvel1}
	\begin{align}\label{ch1}
		(\sqrt{m+\omega})_t=-(u\sqrt{m+\omega})_x ,
	\end{align}	

Without loss of generality, we fix $\omega=1$  in the CH equation (\ref{ch})   replacing    $m$  by ${m}/{\omega}$ and $\lambda$ by $\lambda\omega$.
	The CH equation (\ref{ch}) is completely integrable and admits the Lax pair
	\begin{equation}
		\Psi_x = X \Psi,\hspace{0.5cm}\Psi_t =T \Psi, \label{lax2}
	\end{equation}
	where $X$ and $T$ in  above Lax pair (\ref{lax2})   are traceless matrixes
	\begin{equation}
		X=\left(\begin{array}{cc}
			0 & 1 \\
			\frac{1}{4}+\lambda(m+1) & 0
		\end{array}\right),\nonumber
	\end{equation}
	\begin{equation}
		T=\frac{1}{2}u_x\sigma_3+\left(\begin{array}{cc}
			0 & \frac{1}{2\lambda}-u \\
			(\frac{1}{2\lambda}-u)(\frac{1}{4}+\lambda(m+1))+\frac{1}{2}u_{xx} & 0
		\end{array}\right),
		\nonumber
	\end{equation}
   which
	implies  that  $\det \Psi(x,t)$    is independent of $x$ and $t$ according to the Abel theorem.

   The Lax pair   (\ref{lax2}) for the CH equation  has
	singularities at $\lambda=0, \lambda= \infty$, so the asymptotic  behavior of their eigenfunctions should   be controlled.
	Following the  idea  due to   Boutet de Monvel and Shepelsky \cite{CH},  we need to use   different transformations to analyze these  singularities  $\lambda=0$ and $\lambda=\infty$ respectively.
	
	\noindent \textbf{Case I: $\lambda=\infty$},\quad $m+1>0$
	
	Let $u(x,t)$ be a solution of (\ref{ch}) with $\omega=1$ such that $u(x,t)\in C^0(\mathbb{R}^+,H^4)$.
 Make a transformation
	\begin{align}
		 \widetilde{\Phi}:=\left(\begin{array}{cc}
			1 & 1 \\
			-ik & ik
		\end{array}\right)^{-1}\left(\begin{array}{cc}
			(m+1)^{\frac{1}{4}} & 0 \\
			0 & (m+1)^{\frac{-1}{4}}
		\end{array}\right)\Psi, \nonumber
	\end{align}
then the Lax pair (\ref{lax2}) is changed  into
	\begin{align}
		&\widetilde{\Phi}_x = -ik\sqrt{m+1}\sigma_3\widetilde{\Phi}+U\widetilde{\Phi},\label{lax3}\\
		&\widetilde{\Phi}_t =-ik(\frac{1}{2\lambda}-u\sqrt{m+1})\sigma_3\widetilde{\Phi}+V\widetilde{\Phi}, \label{lax4}
	\end{align}
	where $\lambda=- k^2 -1/4$, and
	\begin{align}
		&U =\frac{1}{4}\frac{m_x}{m+1}\left(\begin{array}{cc}
			0 & 1 \\
			1 &0
		\end{array}\right)-\frac{1}{8ik}\frac{m}{\sqrt{m+1}}\left(\begin{array}{cc}
			-1 & -1 \\
			1 &  1
		\end{array}\right), \nonumber\\
		&V =-uU+\frac{ik}{4\lambda}\sqrt{m+1}\left(\begin{array}{cc}
			-1 & 1 \\
			-1 & 1
		\end{array}\right)+(\frac{u}{4ik}+\frac{ik}{4\lambda})\frac{1}{\sqrt{m+1}}\left(\begin{array}{cc}
			-1 & -1 \\
			1 & 1
		\end{array}\right)+\frac{ik}{2\lambda}\sigma_3. \nonumber
	\end{align}
	  By conservation law of (\ref{ch1}), we define
	\begin{equation}
		p(x,t,k)=x-\int_{x}^{\infty} (\sqrt{m(\xi,t)+1}-1) d\xi+\frac{t}{2\lambda(k)},
	\end{equation}
Making a transformation
	\begin{equation}
		\Phi(k) =\widetilde{\Phi}(k) e^{ikp(x,t,k)\sigma_3}, \label{trans}
	\end{equation}
	then Lax pair  (\ref{lax3})-(\ref{lax4}) becomes
	\begin{align}
		&\Phi_x (k) = -ikp_x[\sigma_3,\Phi(k)]+U\Phi(k),\label{lax5}\\
		&\Phi_t(k)  = -ikp_t[\sigma_3,\Phi(k)]+V\Phi(k), \label{lax6}
	\end{align}
which  leads  to two  Volterra  integral equations
	\begin{equation}
		\Phi_\pm( k) =I+\int^{x}_{\pm \infty}e^{-ik(p(x)-p(y))\hat{\sigma}_3}(U\Phi_\pm(k) ) dy\label{int1}.
	\end{equation}
	
Denote
	 $\Phi_\pm (  k)=\left( \Phi_\pm^{(1)}(  k),  \Phi_\pm^{(2)}(  k)\right), $
	where  $\Phi_\pm^{(1)}$ and $\Phi_\pm^{(2)}( k)$ are
	the first and second columns of $\Phi_\pm ( k)$, respectively.
	With  the  Volterra  integral equation  (\ref{int1}),  it can be shown that

\begin{Proposition} Let the initial data $u_0  \in  H^{4,2}(\mathbb{R})$.  Then we have

\begin{itemize}

\item[(i)]    $\Phi_{-}^{(1)}(  k)$ and $\Phi_{+}^{(2)}(  k)$ are analytical  in the upper-half complex plane $\mathbb{C}^+$;  $\Phi_{+}^{(1)}(  k)$ and $\Phi_{-}^{(2)}(  k)$ are analytical  in
the lower-half complex plane $\mathbb{C}^-$ (see   Figure \ref{fig:figure2}).

\item [(ii)]  As $k\rightarrow \pm\infty$,
\begin{align}
&(\Phi_{-}^{(1)}(  k), \Phi_{+}^{(2)}(  k)) \rightarrow I, \ \ (\Phi_{+}^{(1)}( k), \Phi_{-}^{(2)}(k)) \rightarrow I,\nonumber\\
&	\Phi(  k)=  \frac{1}{k} \alpha(x,t)\left(\begin{array}{cc}
			1 &  1\\
			-1 & -1
		\end{array}\right)+\mathcal{O}\left( 1 \right),\label{asyphi0}
\end{align}
where $\alpha(x,t)$ is an real function.
	
\item[(iii)]  The Jost functions $ \Phi_\pm (  k) $ admit two kinds of  reduction conditions
	\begin{equation}
		\Phi_\pm (  k)=\overline{\Phi_\pm (  -\bar{k})}, \ \ \Phi^{(1)}_\pm (  -k)= \overline{\Phi^{(1)}_\pm (  \bar{k})}. \label{symPhi1}
	\end{equation}
\end{itemize}

\end{Proposition}

	Since   $\widetilde{\Phi}_\pm (  k) $ are two fundamental matrix solutions of the  Lax  pair (\ref{lax3}),  there exists a linear  relation
	\begin{equation}
		\widetilde{\Phi}_+(  k)=\widetilde{\Phi}_-(x, t, k)S(k). \label{scattering}
	\end{equation}
	Combining  with (\ref{trans}),    the  equation (\ref{scattering})  is changed to
	\begin{equation}
		\Phi_{+}(  k)=\Phi_{-}(  k) \mathrm{e}^{-\mathrm{i} k p(x, t, k) \hat{\sigma}_{3}} S(k), \quad k \in \mathbb{R}, k \neq 0, \label{scattering1}
	\end{equation}
	where $S(k)$ is called scattering matrix
	\begin{equation}
		S(k)=\left(\begin{array}{ll}
			\overline{a(\bar{k})} & b(k) \\
			\overline{b(\bar{k})} & a(k)
		\end{array}\right)=\left(\begin{array}{ll}
			a(-k) & b(k) \\
			b(-k) & a(k)
		\end{array}\right). \label{scattering2}
	\end{equation}
	
	From (\ref{scattering1}), $a(k)$ and  $b(k)$ can be expressed by $\Phi_\pm (k) $ as
	\begin{align}
		&a(k)=\det( \Phi_-^{11}\Phi_+^{22}-\Phi_+^{12}\Phi_-^{21}),\quad \quad b(k)=\det(\Phi_+^{12}\Phi_-^{22}-\Phi_-^{12}\Phi_+^{22})e^{2ikp}.\label{scatteringcoefficient1}
	\end{align}
	In addition,   $\Phi_\pm (  k) $  admit the  asymptotics
	\begin{align}
		\Phi_\pm( , k) =I+\dfrac{D_1}{k}+\mathcal{O}(k^{-2}),\hspace{0.5cm}k \rightarrow \infty.\label{asy1}
	\end{align}
	From  (\ref{scatteringcoefficient1}) and (\ref{asy1}),     we obtain  the asymptotic   of $a(k)$
	\begin{align}
		&a(k)=1+\mathcal{O}(k^{-1}),\hspace{0.5cm}k \rightarrow \infty.\label{asya}\\
		&a(k)=\frac{\beta(x,t)}{k}+\mathcal{O}(1),\hspace{0.5cm}k \rightarrow 0,\label{asya0}
	\end{align}
	where $\beta(x,t)$ is an unknown function.
	
	We define  the reflection coefficients   by
	\begin{equation}
		r(k)=\frac{b(k)}{a(k)},\label{symr}
	\end{equation}
	which admits  symmetry reductions	$r(k)=\overline{r(-\bar{k})}.$
	
	Since  $a(k)$ and $b(k)$  have the same coefficient of  $\frac{1}{k}$  in the asymptotic expansion,  we have  $r(0) = 1$.
	
    It has  shown that the zeros of $a(k)$ in the upper half-plane  are simple zeros lying  on the interval $(-\frac{i}{2},0)\cup (0,\frac{i}{2})$ \cite{PD}.
     Suppose that $a(k)$ has $N$ simple zeros $ik_1,...,ik_{N}$ on $\{k|k\in i\mathbb{R}^+\}$. The  symmetries in (\ref{scattering2})  imply that
	\begin{equation}
		a( ik_n)=0 \Leftrightarrow  a\left(- ik_n\right)=0 , \hspace{0.5cm}n=1,...,N.\nonumber
	\end{equation}
	Therefore, the discrete spectrum is
	\begin{equation}
		\mathcal{Z}=\left\{ ik_n,  -ik_n ,  \right\}_{n=1}^{N}, \label{spectrals}
	\end{equation}
	with $k_n\in (0,\frac{1}{2})$ and $-k_n \in (-\frac{1}{2},0)$. And the distribution  of $	\mathcal{Z}$ on   $iR$   is shown  in Figure \ref{fig:figure2}.
	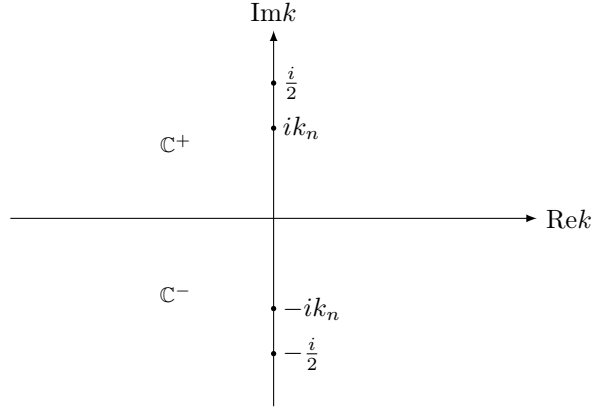
\begin{figure}[H]
		\centering
		\begin{tikzpicture}[node distance=2cm]
			\draw[-latex](-3.5,0)--(3.5,0)node[right]{Re$k$};
			\draw[-latex](0,-2.5)--(0,2.5)node[above]{{\rm Im}$k$};
			\node   at (-1.3,1) {\footnotesize $\mathbb{C}^+$};
			\node   at (-1.3,-1) {\footnotesize $\mathbb{C}^-$};
			\coordinate (A) at (0,1.2);
			\coordinate (B) at (0,-1.2);
			\coordinate (C) at (0,1.8);
			\coordinate (D) at (0,-1.8);
			\fill (A) circle (1pt) node[right] {$ik_n$};
			\fill (B) circle (1pt) node[right] {$-ik_n$};
			\fill (C) circle (1pt) node[right] {$\frac{i}{2}$};
			\fill (D) circle (1pt) node[right] {$-\frac{i}{2}$};
		\end{tikzpicture}
		\caption{\footnotesize   Analytical domains and distribution of the discrete spectrum $\mathcal{Z}$.
			$\Phi_{-}^{(1)}(k), \Phi_{+}^{(2)} (k), a(k)$ are analytical in  $\mathbb{C}^+$ ;
			$\Phi_{+}^{(1)} (k), \Phi_{-}^{(2)}(k)$ are analytical in  $\mathbb{C}^-$ ;   }
		\label{fig:figure2}
	\end{figure}
	
	\noindent \textbf{Case II: $\lambda=0$},\quad $m+1>0$
	
	Define a new transformation
	\begin{equation}
		\Phi^0(k) =Y^{-1}\Psi e^{i(kx+\frac{k}{2\lambda}t)\sigma_3},\label{trans2}
	\end{equation}
	then
	\begin{equation*}
		\Phi^0_\pm (k)  \sim I, \hspace{0.5cm} x \rightarrow \pm\infty,
	\end{equation*}
	and  the Lax pair (\ref{lax2}) change to
	\begin{align}
		&\Phi^0_x = -ik[\sigma_3,\Phi^0]+U^0\Phi^0,\label{lax0.1}\\
		&\Phi^0_t = \frac{-ik}{2\lambda}[\sigma_3,\Phi^0]+V^0\Phi^0, \label{lax0.2}
	\end{align}
	where
	\begin{align}
		&U^0=\frac{-\lambda m}{2ik}\sigma_3+\frac{\lambda m}{2ik}\left(\begin{array}{cc}
			0 & -1 \\
			1 & 0
		\end{array}\right),\nonumber\\
		&V^{0}=u(ik\sigma_3-\frac{\lambda m}{2ik}\left(\begin{array}{cc}
			-1 & -1 \\
			1 & 1
		\end{array}\right))+\frac{m}{4ik}\left(\begin{array}{cc}
			-1 & -1 \\
			1 & 1
		\end{array}\right)\nonumber\\ &+\frac{1}{2}u_x\left(\begin{array}{cc}
			0 & 1 \\
			1 & 0
		\end{array}\right)-\frac{u_{xx}}{4ik}\left(\begin{array}{cc}
			1 & 1 \\
			-1 & -1
		\end{array}\right).\nonumber
	\end{align}
	
	Consider asymptotic expansion as $k\rightarrow \frac{i}{2}$,
	\begin{align}
		\Phi^0=I+ \Phi_1^0 (k-\frac{i}{2})+\mathcal{O}\left( (k-\frac{i}{2})^2\right),\label{asymu0}
	\end{align}
	where
	$$\Phi^0_1= \left(\begin{array}{cc}
		0 & i(u-u_x) \\
		i(u+u_x) & 0
	\end{array}\right). $$
	
	Since $\Phi(k)$ and $\Phi^{0}(k)$ are derived from the same Lax pair (\ref{lax2}), they are linearly dependent. The relations (\ref{trans}) and (\ref{trans2}) lead to
	\begin{equation}
		\Phi_\pm(k):=F(x,t)\Phi^0_\pm (k) e^{ik(\int_{\pm\infty}^x \sqrt{m+1}-1d\xi)\sigma_3},\label{dep}
	\end{equation}
	where
	\begin{align}
		F(x,t)=\frac{1}{2}\left(\begin{array}{cc}
			(m+1)^{\frac{1}{4}}+(m+1)^{-\frac{1}{4}} & (m+1)^{\frac{1}{4}}-(m+1)^{-\frac{1}{4}} \\
			(m+1)^{\frac{1}{4}}-(m+1)^{-\frac{1}{4}}  & (m+1)^{\frac{1}{4}}+(m+1)^{-\frac{1}{4}}
		\end{array}\right).\label{dep1}
	\end{align}

	\subsection{Reflection coefficients}

In this subsection, we establish a mapping between the initial data $u_0(x)$ and the reflection coefficient $r(k)$.
The key is to prove the following proposition

	\begin{Proposition}\label{pror}
		If the initial data $u_0  \in  H^{4,2}(\mathbb{R})$ and $m_0(x)+1=u_{0}(x)-u_{0,xx}(x)+1\geq c_0>0$, then corresponding  reflection  coefficient   $ r(k)\in  H^{1,1}(\mathbb{R})$.
	\end{Proposition}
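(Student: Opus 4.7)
The plan is to derive a Fourier-type representation for $a(k)$ and $b(k)$ from the Volterra integral equations (\ref{int1}), then translate weighted Sobolev bounds on $u_0$ into decay/smoothness bounds on $r(k)=b(k)/a(k)$ through standard integration-by-parts and multiplication arguments. The whole argument is carried out at $t=0$, so throughout we may write $m(x)=u_0(x)-u_0''(x)$ and $\sqrt{m+1}$ is well-defined by the positivity assumption on $m+1$.

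First I would read off $a$ and $b$ from the scattering relation (\ref{scattering1})--(\ref{scattering2}) by setting, say, $t=0$ and examining the limits $x\to\pm\infty$ of the columns of $\Phi_\pm$. Using (\ref{int1}) and the explicit matrix $U(x,0,k)$ written in the excerpt, one obtains
\begin{equation}
a(k)-1 = \int_{\mathbb{R}} \bigl[U(x,0,k)\Phi_-(x,0,k)\bigr]_{11}\,dx,\qquad b(k) = \int_{\mathbb{R}} e^{2ikp(x,0,k)}\bigl[U(x,0,k)\Phi_-(x,0,k)\bigr]_{21}\,dx,
\end{equation}
and similarly with $\Phi_+$. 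Because $p(x,0,k)=x-\int_x^\infty(\sqrt{m+1}-1)d\xi$ is a $k$-independent, monotone Liouville change of variable $x\mapsto p$ with Jacobian $\sqrt{m+1}$, the oscillatory factor becomes $e^{2ikp}$ with no spurious $k$-dependence, and the prefactor $U$ is a bounded rational combination of $m_x/(m+1)$ and $m/\sqrt{m+1}$ divided by $(1,ik)$. Thus, after the change of variable, $b(k)$ is essentially a Fourier transform in $p$ of a function built out of at most two derivatives of $u_0$ together with $1/k$-corrections that are handled separately.

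Next I would check the two $H^{1,1}$ requirements in turn. For $L^2$-decay of $kr(k)$ (equivalently smoothness one derivative past $L^2$), one integrates by parts once in the oscillatory integral for $b(k)$; each integration by parts costs one derivative of $m_x/(m+1)$ and $m/\sqrt{m+1}$, hence of $u_0$, so one needs $u_0\in H^3$ just for $b\in H^1$ (the extra derivative from $H^{4,2}$ absorbs the loss from differentiating the Jost function itself, which by the Volterra estimates sits in the same class as its coefficient). For the weighted norm $\langle k\rangle r(k)\in L^2$, multiplication by $k$ is converted via $k e^{2ikp}=\tfrac{1}{2i}\partial_p e^{2ikp}$ into a derivative, which after integration by parts produces a factor $x$ (through the $p$-derivative of the density); this is where the weight $|x|^2$ from $H^{4,2}$ is consumed. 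The standard Beals--Coifman / Zhou mapping theorems then give $b\in H^{1,1}(\mathbb{R})$, and the symmetry (\ref{symr}) together with $a(-k)=\overline{a(\bar k)}$ (Proposition~\ref{sym}) is used to identify $a$ with the same regularity class after subtracting $1$.

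The main obstacle is the quotient $r=b/a$: one must rule out real zeros of $a$ and quantify $|a(k)|$ from below, otherwise differentiating $r$ introduces singularities. Under the generic assumption that there are no spectral singularities on $\mathbb{R}$, $a(k)\to 1$ as $|k|\to\infty$ and $\inf_{k\in\mathbb{R}}|a(k)|>0$, so $1/a\in L^\infty$ and $(1/a)'=-a'/a^2\in L^2$, which, combined with the Banach-algebra property of $H^{1,1}(\mathbb{R})$ on compact subsets, yields $r\in H^{1,1}(\mathbb{R})$. The most delicate bookkeeping is tracking exactly how many derivatives and weights are spent in passing from $u_0$ to $m$ (losing two derivatives), then to the coefficient matrix $U$ (losing one more derivative through $m_x$), and finally to $r$; this is precisely what forces the hypothesis $u_0\in H^{4,2}$ rather than the $H^{1,1}$ threshold familiar from NLS.
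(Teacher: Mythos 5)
The paper states Proposition~\ref{pror} without any proof, so there is no internal argument to compare against; your outline has to be judged on its own. For $|k|$ bounded away from the origin it follows the standard and essentially correct route: represent $a-1$ and $b$ as oscillatory integrals via (\ref{int1}), trade derivatives of $u_0$ for powers of $k$ by integration by parts and weights $|x|^2$ for $k$-smoothness via $\partial_k e^{2ikp}=2ip\,e^{2ikp}$, then divide by $a$. Two remarks there. First, the lower bound on $|a|$ that you introduce as a ``generic assumption'' is actually free: $\det S=1$ together with the symmetries in (\ref{scattering2}) gives $|a(k)|^2=1+|b(k)|^2\ge 1$ on $\mathbb{R}\setminus\{0\}$, so no spectral singularities can occur and no extra hypothesis is needed. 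Second, your derivative/weight bookkeeping is internally inconsistent: writing $ke^{2ikp}=\frac{1}{2i}\partial_p e^{2ikp}$ and integrating by parts puts a $p$-derivative on the density and therefore costs a derivative of $u_0$, not ``a factor $x$''; the weight is consumed only by $\partial_k$. Since the paper's $H^{1,1}$ requires $(1+|k|)r'\in L^2$, the relevant terms are mixed ones of the form $x\,\partial_x(\text{density})$, and your count ``$u_0\in H^3$ for $b\in H^1$'' attaches the derivative cost to the wrong norm. This is repairable, and the total budget $H^{4,2}$ (two derivatives for $m=u_0-u_0''$, one for $m_x$ in $U$, one for the integration by parts, two weights for $(1+|k|)\partial_k$) is consistent.

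The genuine gap is at $k=0$, which for the Camassa--Holm spectral problem is not a harmless ``$1/k$-correction to be handled separately.'' The coefficient $U$ in (\ref{lax3}) contains the term $\frac{1}{8ik}\frac{m}{\sqrt{m+1}}(\cdots)$, so the Jost solutions, and with them $a(k)$ and $b(k)$, are generically singular as $k\to 0$: $k=0$ is the edge of the continuous spectrum, $a(k)\sim c/k$, and $|r(0)|=1$ (consistent with $|r|^2=1-|a|^{-2}$). Consequently your plan --- establish $b\in H^{1,1}$, establish $a-1$ in the same class, then divide --- fails at its first step, because $b$ is not even locally $L^2$ near the origin in the generic case. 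One must instead analyze the ratio $r=b/a$ directly in a neighborhood of $k=0$, e.g.\ via a low-frequency expansion of the solutions $\Phi^0$ of (\ref{lax0.1}) and the matching relation (\ref{dep}), and show that the weights in $H^{4,2}$ give enough terms in the expansions of $a$ and $b$ at $k=0$ to control $r$ and $r'$ there. This CH-specific low-energy analysis is the heart of the proposition and is entirely absent from your proposal.
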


		Let $I$ is an interval on the real line, $C^0(I,X)$ denotes the space of continuous functions on $I$ taking values in $X$. It is equipped with the norm
	\begin{equation}
		\|f\|_{C^0(I,X)}=\sup\limits_{x\in I}\|f(x)\|_X.\nonumber
	\end{equation}

We prove Proposition \ref{pror}  from  two parts    $I_{\infty}=\{k||k|\geq1\}$ and $I_{0}=\{k||k|<1\}$.
	\subsubsection{Large-k estimates}
	
According to the representation of $a(k),b(k)$, as well as the symmetry of $\Phi(x,t,k)$,  we only need to  consider the first column of Jost solutions.
	
	Let
	\begin{align}
		\textbf{n}_\pm(x,k):=\left(\begin{array}{c}
			n_{11}^{\pm} \\
			n_{21}^{\pm}
		\end{array}\right)=\Phi_\pm^{(1)}(x,k)-\textbf{e}_1, \quad \textbf{e}_1=\left(\begin{array}{c}
			1 \\
			0
		\end{array}\right).\label{n}
	\end{align}
	From (\ref{scatteringcoefficient1}) and (\ref{n}), we rephrase $a(k), b(k)$ as
	\begin{align}
		&	a(k)=(n_{11}^-(0,k)+1)\overline{n_{21}^+(0,k)}-(\overline{n_{11}^+(0,k)}+1)n_{21}^-(0,k),\label{newa}\\
		&	b(k)=\left((\overline{n_{11}^-(0,k)}+1)\overline{n_{21}^+(0,k)}-(\overline{n_{11}^+(0,k)}+1)\overline{n_{21}^-(0,k)}\right)e^{2ikp(0)}.\label{newb}
	\end{align}
    Taking the derivative of (\ref{symr}), we get
	\begin{align}
		r'(k)=\frac{b'(k)}{a(k)}-\frac{b(k)a'(k)}{a^2(k)}.
	\end{align}
	Define the operator $T_\pm$ as follows
	\begin{align}
		T_\pm(f)(x,k)=\int_{\pm\infty}^{x}K_\pm(x,y,k)f(y,k)dy,\label{tpm}
	\end{align}
	where
	\begin{align}
		K_\pm(x,y,k)=\frac{1}{4}\frac{m_x}{m+1}
		\left(\begin{array}{cc}
			0 & 1\\
			e^{2ik(p(x)-p(y))} & 0
		\end{array}\right)-\frac{1}{8ik}\frac{m}{\sqrt{m
				+1}}\left(\begin{array}{cc}
			-1& -1\\
			e^{2ik(p(x)-p(y))} & e^{2ik(p(x)-p(y))}
		\end{array}\right).\label{kpm}
	\end{align}
	From (\ref{int1}),  $\textbf{n}(x,k)$ admits
	\begin{align}
		\textbf{n}_\pm=\textbf{n}_{\pm,0}+T_\pm(\textbf{n}_\pm)=T_\pm(\textbf{e}_1)+T_\pm(\textbf{n}_\pm). \label{n1}
	\end{align}
	Deriving both sides of the equation for $k$, (\ref{n1}) becomes
	\begin{align}
		\textbf{n}_{\pm,k}:=\textbf{n}_{\pm,1}+T_\pm(\textbf{n}_{\pm,k})=(\textbf{n}_{\pm,0})_k+T_{\pm,k}(\textbf{n}_\pm)+T_\pm(\textbf{n}_{\pm,k}),\label{npmk}
	\end{align}
	where
	\begin{align}
		T_{\pm,k}(f)(x,k)=\int_{\pm\infty}^{x}K_{\pm,k}(x,y,k)f(y,k)dy,\nonumber
	\end{align}
	\begin{align}
		K_{\pm,k}(x,y,k)&=\frac{i}{2}\frac{m_x}{m+1}(p(x)-p(y))
		\left(\begin{array}{cc}
			0 & 0\\
			e^{2ik(p(x)-p(y))} & 0
		\end{array}\right)\label{kk}\\ \nonumber
		&+\frac{m}{4k\sqrt{m+1}}(p(x)-p(y))\left(\begin{array}{cc}
			0& 0\\
			-e^{2ik(p(x)-p(y))}& -e^{2ik(p(x)-p(y))}
		\end{array}\right)\\ \nonumber
		&+\frac{1}{8ik^2}\frac{m}{\sqrt{m
				+1}}\left(\begin{array}{cc}
			-1& -1\\
			e^{2ik(p(x)-p(y))} & e^{2ik(p(x)-p(y))}
		\end{array}\right).
	\end{align}
		For simplicity of notation, we omit the subscript "$\pm$".
	\begin{lemma}\label{l1}
		Let $I_{\infty}=\{k||k|\geq1\}$. The following estimates hold.
		\begin{align}
			&\|\textbf{n}_{0}\|_{C^0(\mathbb{R}^
				+,L^2(I_{\infty}))}\lesssim\|m\|_{L^1}+\|m_x\|_{L^2},\label{n0c}\\
			&\|\textbf{n}_{0}\|_{L^2(\mathbb{R}^+,I_{\infty}))}\lesssim\|m\|_{L^{2,\frac{1}{2}}}+\|m_x\|_{L^{2,\frac{1}{2}}},\label{n0l}\\
			&\|(\textbf{n}_{0})_k\|_{C^0(\mathbb{R}^
				+,L^2(I_{\infty}))}\lesssim\|m\|_{L^1}+\|m\|_{L^{2,2}}+\|m_x\|_{L^2}+\|m_x\|_{L^{2,1}}.\label{n0kc}
		\end{align}
		\begin{proof}
			Let
			\begin{align}
				\|\textbf{n}_{0}\|_{C^0(\mathbb{R}^	+,L^2(I_{\infty}))}=\sup\limits_x\|n_{0,1}\|_{L^2(I_{\infty})}+\sup\limits_x\|n_{0,2}\|_{L^2(I_{\infty})},\nonumber
			\end{align}
			where
			\begin{align}
				\|n_{0,1}\|_{L^2(I_{\infty})}&=\left(\int_{I_{\infty}}\left|\int_x^{\infty}\frac{m}{8ik\sqrt{m+1}}dy\right|^{2}dk\right)^{\frac{1}{2}}\leq \int_x^{\infty}\left(\int_{I_{\infty}}\left|\frac{m}{8ik\sqrt{m+1}}\right|^{2}dk\right)^{\frac{1}{2}}dy \nonumber\\
				&=\int_x^{\infty}\left|\frac{m}{8\sqrt{m+1}}\right|\left(\int_{I_{\infty}}\frac{1}{k^{2}}dk\right)^{\frac{1}{2}}dy\lesssim \|m\|_{L^1},\nonumber
			\end{align}
			\begin{align}
				\|n_{0,2}\|_{L^2(I_{\infty})}=\|\frac{1}{4}\int_x^{\infty}\frac{m_y}{m+1}e^{2ik(p(x)-p(y))}dy\|_{L^2(I_{\infty})}+\|\frac{1}{8}\int_x^{\infty}\frac{m}{k\sqrt{m+1}}e^{2ik(p(x)-p(y))}dy\|_{L^2(I_{\infty})}. \label{n02}
			\end{align}
			Using the pairwise definition of the $L^2$ parametrization , the first integral of (\ref{n02}) has the following estimates
			\begin{align}
				 &\|\frac{1}{4}\int_x^{\infty}\frac{m_y}{m+1}e^{2ik(p(x)-p(y))}dy\|_{L^2(I_{\infty})}=\sup\limits_{||\varphi||_{L^2(I_{\infty})}=1}\left|\int_{I_{\infty}}\varphi(k)\int_x^{\infty}\frac{1}{4}\frac{m_y}{m+1}e^{2ik(p(x)-p(y))}dydk\right|\nonumber \\
				&=\sup\limits_{||\varphi||_{L^2(I_{\infty})}=1}\left|\int_x^{\infty}\frac{1}{4}\frac{m_y}{m+1}\widehat {\varphi}(2(p(y)-p(x)))dy\right|\lesssim ||m_y||_{L^2}.\nonumber
			\end{align}
	With  the Minkovski inequality, the second integral of (\ref{n02}) is controlled by $||m||_{L^1}$. Therefore, we get the estimate in (\ref{n0c}).
			
			Let
			\begin{align}
				\|\textbf{n}_{0}\|_{L^2(\mathbb{R}^	+\times L^2(I_{\infty}))}=\|n_{0,1}\|_{L^2(\mathbb{R}^	+\times L^2(I_{\infty}))}+\|n_{0,2}\|_{L^2(\mathbb{R}^	+\times L^2(I_{\infty}))},\nonumber
			\end{align}
			where
			\begin{align}
				\|n_{0,1}\|_{L^2(\mathbb{R}^	+\times L^2(I_{\infty}))}=\left(\int_0^{\infty}\int_{I_{\infty}}\left|\int_x^{\infty}\frac{m}{8ik\sqrt{m+1}}dy\right|^{2}dkdx\right)^{\frac{1}{2}}\lesssim \left(\int_0^{\infty}\int_0^y|m|^2dxdy\right)^{\frac{1}{2}}\lesssim \|m\|_{L^{2,\frac{1}{2}}}, \nonumber
			\end{align}
			\begin{align}
				\|n_{0,2}\|_{L^2(\mathbb{R}^	+\times L^2(I_{\infty}))}=&\left|\int_0^{\infty}\int_{I_{\infty}}\int_x^{\infty}\left|\frac{1}{4}\frac{m_y}{m+1}e^{2ik(p(x)-p(y))}dy\right|^{\frac{1}{2}}dkdx\right|^{\frac{1}{2}}\nonumber\\
				&+\left|\int_0^{\infty}\int_{I_{\infty}}\int_x^{\infty}\left|\frac{1}{8}\frac{m}{k\sqrt{m+1}}e^{2ik(p(x)-p(y))}dy\right|^{\frac{1}{2}}dkdx\right|^{\frac{1}{2}}\nonumber\\
				&\lesssim \left|\int_0^{\infty}\int_0^{y}\Big|m_y\Big|^2dxdy\right|^{\frac{1}{2}}+\left|\int_0^{\infty}\int_0^{y}\Big|m\Big|^2dxdy\right|^{\frac{1}{2}}\lesssim\|m_x\|_{L^{2,\frac{1}{2}}}+\|m\|_{L^{2,\frac{1}{2}}}.\nonumber
			\end{align}
			Thus (\ref{n0l}) has been proved.
			
			Noticing that
			\begin{align}
				|p(x)-p(y)|=|y-x+\int_x^y\sqrt{m+1}-1d\xi|\leq|y-x|+\|m\|_{L^1},\label{p-p}
			\end{align}
			we can apply the method used in estimating $\textbf{n}_{0}$ to obtain the estimate for $(\textbf{n}_{0})_k$.
		\end{proof}
	\end{lemma}
	
	\begin{lemma}\label{l2}
		Supposing that $u_0 \in H^{4,2}(\mathbb{R})$, the following bound of operator $T_k$ holds uniformly in $u_0(x)\in H^{4,2}(\mathbb{R})$, and $T_k$ is Lipschitz continuous for $u_0(x)$ with
		\begin{equation}
			\|T_k\|_{L^2(\mathbb{R}^+\times I_{\infty})\to C^0(\mathbb{R}^+,\ L^2(I_{\infty}))}\lesssim
			\|m\|_{L^1}(\|m\|_{L^2}+\|m_x\|_{L^{2,1}})+\|m\|_{H^{1,1}}.\label{tk}
		\end{equation}
		\begin{proof}
			Notice that
			\begin{equation}
				T_k(f)(x,k)=\int_{\pm\infty}^xK_k(x,y,k)f(y,k)dy,
			\end{equation}
			where $K_k(x,y,k)$ is shown in (\ref{kk}).
			Thus, for any given function $f(x,k)\in L^2(\mathbb{R}^+\times I_{\infty})$, it follows that
				\begin{align}
					&|T_k(f)(x,k)|\nonumber\\
					&\lesssim \left[(\int_x^{\infty}|(p(y)-p(x))m_y|^2dy)^{\frac{1}{2}}+(\int_x^{\infty}|m|^2dy)^{\frac{1}{2}}+
					(\int_x^{\infty}|m(p(y)-p(x))|)|^2dy)^{\frac{1}{2}}\right]
					(\int_x^{\infty}\|f\|^2_{L^2(I_\infty)}dy)^{\frac{1}{2}},\nonumber
				\end{align}
			then
			\begin{equation}
				||T_k(f)(x,k)||_{C^0(\mathbb{R}^+,L^2(I_{\infty}))}\lesssim
				\left(||m||_{L^1}(||m||_{L^2}+||m_x||_{L^{2,1}})+||m||_{H^{1,1}}\right)||f||_{L^2(\mathbb{R}^+\times I_{\infty})},\nonumber
			\end{equation}
			which implies (\ref{tk}).
		\end{proof}
	\end{lemma}
	
	\begin{lemma}\label{l3}
		Suppose that $u_0 \in H^{4,2}(\mathbb{R})$. The resolvent $(I-T)^{-1}$ exists as a bounded operator in $C^0(\mathbb{R}^+,L^2(I_{\infty}))$, and the operator $L=(I-T)^{-1}-I$ is an integral operator with the continuous integral kernel $L(x,y,k)$, which satisfies the following estimate
		\begin{equation}
			|L(x,y,k)|\leqslant e^{||h||_{L^1}}h(y).\label{lguji}
		\end{equation}
		\begin{proof}
			By (\ref{tpm}) and (\ref{kpm}), it's obvious that $T$ is a Volterra operator and the integral kernel $K(x,y,k)$ satisfies
			\begin{equation}
				\sup_{k\in I_{\infty}}|K(x,y,k)|\leqslant h(y),\quad
				h(y)=|m_y|+|m|.\label{suph}
			\end{equation}
			Suppose $f(x,k)\in C^0(\mathbb{R}^+,L^2(I_{\infty})$, and denote $f^*(x)\triangleq\sup_{y\geqslant x}||f(\cdot,k)||_{L^2(I_{\infty})}$. From above estimate (\ref{suph}), we deduce that
			\begin{equation}
				(Tf)^*(x)\leqslant\int_x^{\infty}
				h(y)f^*(y)dy.\label{tf}
			\end{equation}
			Thus we obtain that $(I-T)^{-1}$ exists and is a bounded operator on $C^0(\mathbb{R}^+,L^2(I_{\infty}))$. The operator $L$ has an integral kernel as follows
			\begin{equation}
				L(x,y,k)=\left\{\begin{array}{ll}\sum_{n=1}^{\infty}K_n(x,y,k),\quad &x\leqslant y,\\
					0,& x>y,\end{array}\right.  \nonumber
			\end{equation}
			where
			\begin{equation}
				K_n(x,y,k)=\int_{x\leqslant y_1\leqslant \dots \leqslant y_{n-1}}K(x,y_1,k)K(y_1,y_2,k)\dots K(y_{n-1},y,k)dy_{n-1}...dy_1.\label{kn}
			\end{equation}
			Combining (\ref{kn}) with (\ref{suph}), the estimate
			\begin{equation}
				|K_n(x,y,k)|\leqslant \frac{1}{(n-1)!}\left(\int_x^{\infty}h(\zeta)d\zeta\right)^{n-1}h(y) \nonumber
			\end{equation}
			holds, which proves (\ref{lguji}).
		\end{proof}
	\end{lemma}
	\noindent\textbf{Remark:}
	Notice that the formula $L=(I-T)^{-1}-I$ can be rewritten as $L=T+T(I-T)^{-1}T$, and the bounds on $T:L^2\to C^0,$ $T:C^0\to L^2 ,$ $T:L^2\to L^2$ are proved by (\ref{suph}) and (\ref{tf}). It's easy to deduce that $L$ belongs to  $\mathcal{B}(C^0(\mathbb{R}^+,L^2(I_{\infty})))\cap\mathcal{B}(L^2(\mathbb{R}^+\times I_{\infty}))$ with the following estimates
	\begin{eqnarray}
		||L||_{\mathcal{B}(C^0(\mathbb{R}^+,L^2(I_{\infty})))}\lesssim
		(||m_x||_{L^1}+||m||_{L^1})\exp(||m_x||_{L^1}+||m||_{L^1}),\nonumber\\
		||L||_{\mathcal{B}(L^2(\mathbb{R}^+\times I_{\infty}))}\lesssim
		(||m_x||_{L^{2,1}}+||m||_{L^{2,1}})\exp(||m_x||_{L^1}+||m||_{L^1}). \nonumber
	\end{eqnarray}

	In order to prove Proposition \ref{pror} under the condition that $k\in I_{\infty}$, we  need to prove the  three propositions.
	\begin{Proposition}\label{pron}
		The map $u_0(x)$ $\rightarrow$ $\textbf{n}_\pm(0,k)$ is Lipschitz continuous from $H^{4,2}(\mathbb{R})$ into $H^{1}(I_{\infty})$.
	\end{Proposition}

\begin{proof}
	To prove $\textbf{n}\in H^1(I_{\infty})$, we need to show that $\textbf{n}\in L^2(I_{\infty})$ and $\textbf{n}_{k}\in L^2(I_{\infty})$:
	\begin{enumerate}
		\item For each $u_0 \in H^{4,2}(\mathbb{R})$ and $k\in I_{\infty}$, there exists a unique solution of (\ref{n1}) such that $\textbf{n}\in C^0(\mathbb{R}^+,L^2(I_{\infty}))\cap L^2(\mathbb{R}^+\times I_{\infty})$ and the map $u_0(x)\to \textbf{n}$ is Lipschitz continuous from $H^{4,2}(\mathbb{R})$ to $C^0(\mathbb{R}^+,L^2(I_{\infty}))\cap L^2(\mathbb{R}^+\times I_{\infty})$, which can be directly proved by Lemma \ref{l1}, Lemma \ref{l3} and the Remark;
		\item By (\ref{n0kc}), Lemma \ref{l2} and the result of item 1, the inhomogeneous term of (\ref{npmk})
		\[\textbf{n}_{1}=(\textbf{n}_{0})_k+T_{k}(\textbf{n})\]
		belongs to $C^0(\mathbb{R}^+,L^2(I_{\infty}))$. Combining this result with Lemma \ref{l3} and the Remark, for $u_0  \in H^{4,2}(\mathbb{R})$ and $k\in I_{\infty}$, (\ref{npmk}) admits a unique solution $\textbf{n}_{k}$ with $\textbf{n}_{k} \in C^0(\mathbb{R}^+,L^2(I_{\infty}))$, and the map $u_0(x)\to\textbf{n}_{k} $ is Lipschitz continuous from $H^{4,2}(\mathbb{R})$ to $C^0(\mathbb{R}^+,L^2(I_{\infty}))$.
	\end{enumerate}
\end{proof}

	\begin{Proposition}\label{prokb}
		The map $u_0(x)$ $\rightarrow$ $kb(k)$ is Lipschitz continuous from $H^{4,2}(\mathbb{R})$ into $L^2(I_{\infty})$.
	\end{Proposition}

\begin{proof}
	By  (\ref{newb}), we find that
	\begin{equation}
		kb(k)=e^{2ikp(0)}\left(\overline{n_{11}^-(0,k)}(k\overline{n_{21}^+(0,k)})-\overline{n_{11}^+(0,k)}(k\overline{n_{21}^-(0,k)})+k\overline{n_{21}^+(0,k)}
		-k\overline{n_{21}^-(0,k)}\right). \nonumber
	\end{equation}
	We first handle with the item $kn_{21}$:
	\begin{equation}
		\begin{aligned}
			&kn_{21}:=\Pi_1+\Pi_2,
		\end{aligned}
	\end{equation}
	where
	\begin{equation}
		\begin{aligned}
			&\Pi_1=\int^x_{\pm \infty}\frac{k}{4}\frac{m_y}{m+1}e^{2ik(p(x)-p(y))}(n_{11}+1)dy;\\
			&\Pi_2=\int^x_{\pm \infty}-\frac{1}{8i}
			\frac{m}{\sqrt{m+1}}e^{2ik(p(x)-p(y))}dy-\int^x_{\pm \infty}\frac{1}{8i}
			\frac{m}{\sqrt{m+1}}e^{2ik(p(x)-p(y))}(n_{11}+n_{21})dy.
		\end{aligned}
	\end{equation}
	As for $\Pi_1$, integrating by part over $y$, it comes to
	\begin{equation}
		\begin{aligned}
			&\Pi_1=-\frac{1}{8i}\int_{\pm\infty}^x\frac{m_y}{(m+1)^{3/2}}(n_{11}+1)de^{2ik(p(x)-p(y))}\\
			&=-\frac{1}{8i}\frac{m_x}{(m+1)^{3/2}}-\frac{1}{8i}\frac{m_x}{(m+1)^{3/2}}n_{11}+\frac{1}{8i}\int_{\pm\infty}^x\left(\frac{m_y}{(m+1)^{3/2}}(n_{11}+1)\right)_ye^{2ik(p(x)-p(y))}dy\\
			&:=\Pi_0+\Pi_{11}=-\frac{1}{8i}\frac{m_x}{(m+1)^{3/2}}+\Pi_{11}. \nonumber
		\end{aligned}
	\end{equation}
	By the similar method as in Lemma \ref{l1}, it's easy to deduce that $\Pi_{11},\ \Pi_{2}\in L^2(I_{\infty})$ and their norms are controlled by $\|m\|_{H^{2,2}}$. We now denote $kn_{21}$ as
	\[kn_{21}=\Pi_0(x)+\Pi_{11}(x,k)+\Pi_{2}(x,k),\]
	where $\Pi_0$ is bounded and independent with $k$.\\
	Thus,
	\begin{equation}
		\begin{aligned}
			 kb(k)=e^{2ikp(0)}&\left[\overline{n_{11}^-(0,k)}\left(\overline{\Pi_0(0)}+\overline{\Pi_{11}^+(0,k)}+\overline{\Pi_2^+(0,k)}\right)-\overline{n_{11}^+(0,k)}\left(\overline{\Pi_0(0)}+\overline{\Pi_{11}^-(0,k)}+\overline{\Pi_2^-(0,k)}\right)\right.\\
			&+\left.\overline{\Pi_{11}^+(0,k)}+\overline{\Pi_2^+(0,k)}-\overline{\Pi_{11}^-(0,k)}-\overline{\Pi_2^-(0,k)}\right]\in L^2(I_{\infty}).\nonumber
		\end{aligned}
	\end{equation}
	\end{proof}

	\begin{Proposition}\label{prokbb}
		The map $u_0(x)$ $\rightarrow$ $kb'(k)$ is Lipschitz continuous from $H^{4,2}(\mathbb{R})$ into $L^2(I_{\infty})$.
	\end{Proposition}
	(\ref{newa}), (\ref{newb}) and Proposition \ref{pron} state that $a(k), b(k)\in H^1(I_{\infty})$, which proved Proposition \ref{pror} by combining the results of Propositions \ref{prokb} and \ref{prokbb}.

\begin{proof}
	
	We first take the derivative of $b(k)$, and multiply it by $k$:
	\begin{equation}
		\begin{aligned}
			kb'(k)=2ip(0)(kb(k))+e^{2ikp(0)}&\left[(\overline{n_{11}^-(0,k)})_k(k\overline{n_{21}^+(0,k)})-
			(\overline{n_{11}^+(0,k)})_k(k\overline{n_{21}^-(0,k)})\right.\\
			&+\overline{n_{11}^-(0,k)}(k(\overline{n_{21}^+(0,k)})_k)
			-\overline{n_{11}^+(0,k)}(k(\overline{n_{21}^-(0,k)})_k)\\
			&+\left.k(\overline{n_{21}^+(0,k)})_k-k(\overline{n_{21}^-(0,k)})_k\right].
		\end{aligned}\label{kbp}
	\end{equation}
	As proved above, the first line of (\ref{kbp}) belongs to $L^2(I_{\infty})$. Denote the remains as
	\begin{equation}
		\tilde{\Pi}=e^{2ikp(0)}\left[\overline{n_{11}^-(0,k)}(k(\overline{n_{21}^+(0,k)})_k)
		-\overline{n_{11}^+(0,k)}(k(\overline{n_{21}^-(0,k)})_k)+k(\overline{n_{21}^+(0,k)})_k-k(\overline{n_{21}^-(0,k)})_k\right].\label{pip}
	\end{equation}
	Using the same method shown in Proposition \ref{prokb}, we deal with $k(n_{21})_k$:
	\begin{equation}
		\begin{aligned}
			&k(n_{21})_k=\tilde{\Pi}_1+\tilde{\Pi}_2,\\
			&\tilde{\Pi}_1=\int_{\pm\infty}^x\left[\frac{ik}{2}(p(x)-p(y))e^{2ik(p(x)-p(y))}(n_{11}+1)+\frac{k}{4}\frac{m_y}{m+1}e^{2ik(p(x)-p(y))}(n_{11})_k\right]dy,\\
			&\tilde{\Pi}_2=\int_{\pm\infty}^x\left[\left(\frac{1}{8ik}+\frac{1}{4}(p(x)-p(y))\right)\frac{m}{\sqrt{m+1}}e^{2ik(p(x)-p(y))}\left((n_{11}+1)+n_{21}\right)\right]dy\\
			&+\int_{\pm\infty}^x\frac{1}{4}\frac{m}{\sqrt{m+1}}e^{2ik(p(x)-p(y))}\left((n_{11})_k+(n_{21})_k\right)dy. \nonumber
		\end{aligned}
	\end{equation}
	From the estimate (\ref{p-p}) and the fact that $m(x)\in H^{2,2}$, it's obvious that
	\begin{equation}
		\lim_{x\to \pm\infty}|m_x(p(x)-p(y))|=0.
	\end{equation}
	Take a divisional integral on $\tilde{\Pi}_1$:
	\begin{equation}
		\begin{aligned}
			\tilde{\Pi}_1&=\int_{\pm\infty}^x\left[-\frac{1}{4}(p(x)-p(y))\frac{m_y}{(m+1)^{3/2}}(n_{11}+1)+
			\frac{i}{2}\frac{m_y}{(m+1)^{3/2}}(n_{11})_k\right]de^{2ik(p(x)-p(y))}\\
			&=\frac{i}{2}\frac{m_x}{(m+1)^{3/2}}(n_{11})_k+\int_{\pm\infty}^x\left[\frac{1}{4}\frac{m_y}{(m+1)^{3/2}}\left((p(x)-p(y))(n_{11}+1)-2i (n_{11})_k\right)\right]_ydy.
		\end{aligned}\nonumber
	\end{equation}
	Using the results we have deduced all above, it's not hard to get that $\tilde{\Pi}_1(0,k),\ \tilde{\Pi}_2(0,k)\in L^2(I_{\infty})$, thus $k(n_{21})_k(0,k)\in L^2(I_{\infty})$. Finally we have proved $\tilde{\Pi}$ defined as (\ref{pip}) belongs to $L^2(I_{\infty})$.

	\end{proof}

	\subsubsection{Small-k estimates}	
	We need to find a new set of Jost solutions for the estimation in the case of small $k$.
	Let
	  $$\Phi_{\pm}^*( k)=Y(k)\Phi_{\pm}( k), \ \ \ Y(k):=\left(\begin{array}{cc}
		1 & 1 \\
		-ik & ik
	\end{array}\right).$$
Then 	
\begin{align}
		\Phi_{\pm}^*( k)=Y(k)+\int^{x}_{\pm \infty}e^{-ik(p(x)-p(y))\hat{\sigma}_3}(U^*\Phi_\pm^*)(y,t,k)dy,\label{phi*}
	\end{align}
	where
	\begin{align}
		U^* =Y(k)U Y^{-1}(k)=\frac{1}{4}\frac{m_x}{m+1}\sigma_3-\frac{1}{2}\frac{m}{\sqrt{m+1}}\left(\begin{array}{cc}
			0 & 0 \\
			1 & 0
		\end{array}\right). \nonumber
	\end{align}
	Direct calculation gives
	\begin{align}
		&a^*(k)=\Phi_-^{11,*}\Phi_+^{22,*}-\Phi_+^{12,*}\Phi_-^{21,*} ,\quad \quad b(k)=(\Phi_+^{12,*}\Phi_-^{22,*}-\Phi_-^{12,*}\Phi_+^{22,*})e^{2ikp(0)}, \nonumber
	\end{align}
which implies that
	\begin{align}
		r^*(k)=\frac{b^*(k)}{a^*(k)}=\frac{b(k)}{a(k)}=r(k).\label{newr*}
	\end{align}
	
	The feature of (\ref{phi*}) and (\ref{newr*}) suggest that we can proceed with this part of the proof as in large $k$ estimates, leading to the corresponding conclusion.
	\begin{Proposition}\label{pron1}
		The maps $u_0(x)$ $\rightarrow$ $a^*(k)$, $u_0(x)$ $\rightarrow$ $b^*(k)$ are Lipschitz continuous from $H^{4,2}(\mathbb{R})$ into $H^{1}(I_{0})$, and the maps $u_0(x)$ $\rightarrow$ $kb^*(k)$, $u_0(x)$ $\rightarrow$ $k(b^{*})'(k)$ are Lipschitz continuous from $H^{4,2}(\mathbb{R})$ into $L^2(I_{0})$.
\end{Proposition}
		
\begin{proof}
			For the case $k\in I_0$, we define $\tilde{\textbf{n}}=(\Phi^{*}-I)\textbf{e}_1$. Then the key process is to prove that $u_0(x)$ $\rightarrow$ $\tilde{\textbf{n}}$ is Lipschitz continuous from $H^{4,2}(\mathbb{R})$ into $H^{1}(I_{0})$. However, this problem can be solved with the methods we used in Proposition {\ref{pron}}. The remaining conclusion of the Proposition \ref{pron1} is simply  shown by using divisional integrals to make integral estimates, like the process of proving Proposition \ref{prokb} and \ref{prokbb}.
		\end{proof}

	
	Combining the conclusions of Proposition \ref{pron}-\ref{pron1}, we have a complete proof of Proposition \ref{pror}.

	\subsection{A  basic  RH problem}
	\quad
	We introduce a  new   scale
	\begin{equation}
		y(x,t)=x-\int_{x}^{+\infty} \left(\sqrt{m+1}-1\right) d\xi. \label{trans3}
	\end{equation}
	The price to pay for this scale is that the solution of the initial problem can be given only implicitly.
	By the definition of the new scale $y(x, t)$, we define
	\begin{equation}
		M(k)=M(k;y,t)\triangleq \left\{ \begin{array}{ll}
			\left(\Phi_{-}^{(1)}(k;y,t), \frac{ \Phi_{+}^{(2)} (k;y,t) } {a(k)}\right),   &\text{as } k\in \mathbb{C}^+,\\[12pt]
			\left( \frac{\Phi_{+}^{(1)}(k;y,t)}{a(-k)},\Phi_{-}^{(2)}(k;y,t)\right)  , &\text{as }k\in \mathbb{C}^-,\\
		\end{array}\right. \label{tr1}
	\end{equation}
	where the  phase function is
	\begin{equation}
		\theta(k)= k \left[\frac{y}{t}+\frac{1}{2\lambda (k)}  \right].\label{theta}
	\end{equation}
	Then $M(k)$    satisfies  the following  RH problem for the new variable $(y,t)$
	\begin{RHP}\label{RHP2}
		Find a matrix-valued function $	M(k) $ which satisfies:
		
		$\blacktriangleright$ Analyticity: $M(k)$ is meromorphic in $\mathbb{C}\setminus \mathbb{R}$ and has single poles;
		
		$\blacktriangleright$ Symmetry: $M(k)=\overline{M(-\bar{k})}$;
		
		$\blacktriangleright$ Jump condition: $M(k)$ has continuous boundary values $M_\pm(k)$ on $\mathbb{R}$ and
		\begin{equation}
			M_+(k)=M_-(k)V(k),\hspace{0.5cm}k \in \mathbb{R}, \nonumber
		\end{equation}
		where
		\begin{equation}
			V(k)=\left(\begin{array}{cc}
				1 & r(k)e^{-2it\theta}\\
				-\overline{r(k)}e^{2it\theta}&  1-|r(k)|^2
			\end{array}\right);\nonumber
		\end{equation}
		
		$\blacktriangleright$ Asymptotic behaviors:
		\begin{align}
			&M(k) = I+\mathcal{O}(k^{-1}),\hspace{0.5cm}k \rightarrow \infty,\nonumber\\
			&M(k)=\frac{\alpha_-(x,t)}{k}\left(\begin{array}{cc}
				1 &0\\
				-1& 0
			\end{array}\right)+\mathcal{O}(1),\hspace{0.5cm}k \to 0,\ {\rm Im} k>0,\nonumber\\
			&M(k)=\frac{\alpha_-(x,t)}{k}\left(\begin{array}{cc}
				0 &1\\
				0& -1
			\end{array}\right)+\mathcal{O}(1),\hspace{0.5cm}k \to 0,\ {\rm Im}k<0. \nonumber
		\end{align}
		$\blacktriangleright$ Residue conditions: $M(k)$ has simple poles at each point in $ \mathcal{Z} $ with:
		\begin{align}
			&\res_{k=ik_n}M(k)=\lim_{k\to ik_n}M(k)\left(\begin{array}{cc}
				0 & c_ne^{-2it\theta(ik_n)}\\
				0& 0
			\end{array}\right),\\
			&\res_{k=-ik_n}M(k)=\lim_{k\to -ik_n}M(k)\left(\begin{array}{cc}
				0 &0\\
				\bar{c}_ne^{2it\theta(-ik_n)} & 0
			\end{array}\right),
		\end{align}
	\end{RHP}
where $c_n=b(k_n)/a'(k_n), \ n=1,\cdots, N$  are called norming constants.
	To reconstruct the potential $u(x,t)$, we define two functions:
	\begin{align}
		&\mu_{1}(y, t, k):=\frac{\left(\Phi_{-}^{11}\left(y, t, k\right)+\Phi_{-}^{21}\left(y, t, k\right)\right)} {a\left(k\right)}, \ &\mu_{2}(y, t, k):=\Phi_{+}^{12}\left(y, t, k\right)+\Phi_{+}^{22}\left(y, t, k\right). \nonumber
	\end{align}
	Therefore,   we  get the following formula
	\begin{align}
		&u(y, t)=\frac{1}{2 \mathrm{i}} \lim _{k \rightarrow \frac{\mathrm{i}}{2}}\left(\frac{\mu_{1}(y, t ; k) \mu_{2}(y, t ; k)}{\mu_{1}(y, t ; \frac{\mathrm{i}}{2}) \mu_{2}(y, t ; \frac{\mathrm{i}}{2})}-1\right) \frac{1}{k-\frac{\mathrm{i}}{2}},\label{weishi}\\
		&x(y, t)=y+\ln \frac{\mu_{1}\left(y, t ; \frac{\mathrm{i}}{2}\right)}{\mu_{2}\left(y, t ; \frac{\mathrm{i}}{2}\right)}. \label{xy}
	\end{align}
	Combining (\ref{tr1}) and (\ref{weishi}), the solution $u(x, t)$ of the initial value problem of the CH equation is represented by the solution of RHP \ref{RHP2}
	\begin{align}
		&u(y, t)=\frac{1}{2 \mathrm{i}} \lim _{k \rightarrow \frac{\mathrm{i}}{2}}\left(\frac{\left(M_{11}(k)+M_{21}(k)\right)\left(M_{12}(k)+M_{22}(k)\right)}{\left(M_{11}(\frac{i}{2})+M_{21}(\frac{i}{2})\right)\left(M_{12}(\frac{i}{2})+M_{22}(\frac{i}{2})\right)}-1\right) \frac{1}{k-\frac{\mathrm{i}}{2}} . \label{cover}
	\end{align}
	
	Noticing that $k = 0$ is a singularity point of $M(k)$,  we  make a  transformation to remove it.
	\begin{align}
		\left(I-\frac{1}{k-1}\sigma_1\right)M(k) =\left(I-\frac{1}{k-1}\sigma_1\left(M^J(y,t,1)\right)^{-1}\right)M^J( k),\label{fenjie}
	\end{align}
	then $M^J( k)$  satisfies a new RH problem
	\begin{RHP}\label{RHP2-2}
		Find a matrix-valued function $	M^J(k) $ which satisfies:
		
		$\blacktriangleright$ Analyticity: $M^J(k)$ is analytic in $\mathbb{C}\setminus \left(\mathbb{R} \cup \mathcal{Z}  \right)$;
		
		$\blacktriangleright$ Jump condition: $M^J(k)$ has continuous boundary values $M^J_\pm(k)$ on $\mathbb{R}$ and
		\begin{equation}
			M^J_+(k)=M^J_-(k)V(k),\hspace{0.5cm}k \in \mathbb{R},
		\end{equation}
		
		$\blacktriangleright$ Asymptotic behaviors:
		\begin{align}
			&M^J(k) = I+\mathcal{O}(k^{-1}),\hspace{0.5cm}k \rightarrow \infty,\label{jianjin5}
		\end{align}
		
		$\blacktriangleright$ Residue conditions: $M^J(k)$ has simple poles at each point in $ \mathcal{Z} $ with:
		\begin{align}
			&\res_{k=ik_n}M^J(k)=\lim_{k\to ik_n}M^J(k)\left(\begin{array}{cc}
				0 & c_ne^{-2it\theta(ik_n)}\\
				0& 0
			\end{array}\right),\\
			&\res_{k=-ik_n}M^J(k)=\lim_{k\to -ik_n}M^J(k)\left(\begin{array}{cc}
				0 &0\\
				\bar{c}_ne^{2it\theta(-ik_n)} & 0
			\end{array}\right).
		\end{align}
	\end{RHP}
	
	\subsection{Solvability of RH problem}\label{subsec2.3}

In this subsection, we show the solvability of   RHP \ref{RHP2-2}  with
  Zhou's Vanishing Lemma.
	Although  the jumps that poles  convert to  do not satisfy Hermite symmetry,
they  are oriented to preserve Schwartz reflection symmetry in the real axis $\mathbb{R}$ and the matrix-valued function $F(k)=M^J(k)M^J(\bar k)^H$  is still analytic for $k \in \mathbb{C}\setminus \mathbb{R}$.
 We get the following proposition
	
	\begin{Proposition}
		By  changing the asymptotic condition (\ref{jianjin5}) into $M^J(k)= \mathcal{O}(k^{-1})  \hspace{0.2cm} as \hspace{0.2cm} k \rightarrow \infty$, the solution to the modified RHP \ref{RHP2-2}  is identically zero.
	\end{Proposition}

		\begin{proof}
			We first recall the jump matrix produced by poles.
			\begin{align}
				V(k)=\left\{\begin{array}{ll}\left(\begin{array}{cc}
						1 & \frac{c_ne^{-2it\theta(ik_n)}}{k-ik_n} \\
						0&  1
					\end{array}\right),   &\text{as } k\in\partial \mathcal{D}_n:=\{k:|k-ik_n|=\rho\},\\[12pt]
					\left(\begin{array}{cc}
						1 & 0\\
						\frac{\bar{c}_ne^{2it\theta(-ik_n)}}{k+ik_n} & 1
					\end{array}\right),   &\text{as } k\in\partial \bar{\mathcal{D}}_n:=\{k:|k+ik_n|=\rho\}.\\[12pt]
				\end{array}\right.
			\end{align}
			Note that $V(k)=V(\bar{k})^H$ for $k \in \partial \mathcal{D}_n \cup \partial \bar{\mathcal{D}}_n$, where the superscript $H $ denotes the complex conjugation and transpose of a given matrix. Although the jumps on the real axis do not have good symmetry, the required properties can be obtained by simple calculations.
			
			Now we focus on the matrix-valued function $F(k)=M^J(k)M^J(\bar k)^H$,  we  prove that
			\begin{equation}\label{F_+}
				\int_{\mathbb{R}}F_{\pm}(k)dk=0.
			\end{equation}
			From the definition of $F(k)$, it is clear that $F(k)$ is analytic in  $\mathbb{C} \setminus (\mathbb{R}\cup \partial \mathcal{D}_n \cup \partial \bar{\mathcal{D}}_n )$. For $k \in  \partial \mathcal{D}_n $, we  find that
			\begin{align}
				F_+(k)=&M_+^J(k)M_-^J(\bar k)^H=M_-^J(k)V(k)\left(V^{-1}(\bar{k})\right)^HM_+^J(\bar k)^H \nonumber \\
				=&M_-^J(k)M_+^J(\bar k)^H=F_-(k).\nonumber
			\end{align}
			Thus by Morera's theorem, $F(k)$ is analytic in $\mathbb{C} \setminus \mathbb{R}$. The Beals-Coifman theorem implies that $M^J(k)\sim \mathcal{O}(k^{-1}),\hspace{0.2cm} k \in \mathbb{C} \setminus \mathbb{R},$ and $F(k) \sim \mathcal{O}(k^{-2}),\hspace{0.2cm} k \in \mathbb{C} \setminus \mathbb{R}$, then (\ref{F_+}) follows from the Cauchy intergral theorem.
			
			Take the expression of $F(k)$ into (\ref{F_+}). Notice that for $k \in \mathbb{R}$,
			\begin{equation}
				F_+(k)=M_+^J(k)M_-^J(\bar k)^H=M_-^J(k)V(k)M_-^J(\bar k)^H,\nonumber
			\end{equation}
			and
			\begin{equation}
				F_-(k)=M_-^J(k)M_+^J(\bar k)^H=M_-^J(k)V(k)^HM_-^J(\bar k)^H,\nonumber
			\end{equation}
			so
			\begin{equation}\label{V}
				\int_{\mathbb{R}}M_-^J(k)\left(V(k)+V(k)^H\right)M_-^J(k)^H=0.
			\end{equation}
			For $k \in \mathbb{R}\setminus \{0\}$,
			\begin{align}
				V(k)+V(k)^H=2\left(\begin{array}{cc}
					1 & 0 \\
					0&  1-|r(k)|^2
				\end{array}\right), \nonumber
			\end{align}
			since the element $1-|r(k)|^2=\frac{1}{|a|^2}>0$, the matrix above is positive definite and has positive eigenvalues. Meanwhile, $M^J(0)\neq0$. Therefore, we can deduce from (\ref{V}) that $M_-^J(k)=0$. Furthermore , $M_+^J(k)=M_-^J(k)V(k)=0,\hspace{0.2cm} k \in  \mathbb{R}$. The rest part of the proof is similar.
		\end{proof}

	Further by  the Fredholm alternative theorem, we then obtain
	
	\begin{Proposition}
		Let the initial data $u_0  \in H^{4,2}(\mathbb{R})$, then  the RHP \ref{RHP2-2} has a unique solution.
	\end{Proposition}

	\subsection{Classification  of  asymptotic regions }\label{subsec2.4}
	We note that the jump matrix and residue conditions of RHP \ref{RHP2} contain the exponential function $e^{\pm2it\theta}$, which is an oscillatory term in long-time asymptotics. Therefore we need to control the real part of $2it\theta$ and decompose the jump matrix by decay region.
	\begin{align}
		&\text{Re}(2it\theta)=-2t\text{Im}k\left[\xi+\frac{2(\text{Re}^2k+\text{Im}^2k)-\frac{1}{2} }{(2(\text{Re}^2k-\text{Im}^2k)+\frac{1}{2})^2+(4\text{Re}k\text{Im}k)^2}\right],\label{Reitheta}
	\end{align}
	where $\xi=\frac{y}{t}$. The signature  of $\text{Im}\theta$ are shown in Figure \ref{figtheta}. Calculation gives that $\kappa_0^2(\xi)=\frac{\sqrt{1+4\xi}-1-\xi}{4\xi}$, which suggest us to divide half-plane  $-\infty <y<\infty, \ t> 0$  in four space-time regions:
	
	\noindent Case I: $\xi<-1/4$ in   Figure \ref{figtheta} (a),  \hspace{0.7cm} Case II: $-1/4<\xi<0$  in  Figures  \ref{figtheta} (b),
	
	\noindent Case III: $0\leq\xi<2$ in   Figure \ref{figtheta} (c),\hspace{0.3cm} \quad Case IV: $\xi>2$  in   Figure \ref{figtheta} (d).
	
	\begin{figure}[h]
		\centering
		\subfigure[]{\includegraphics[width=4cm,height=4cm]{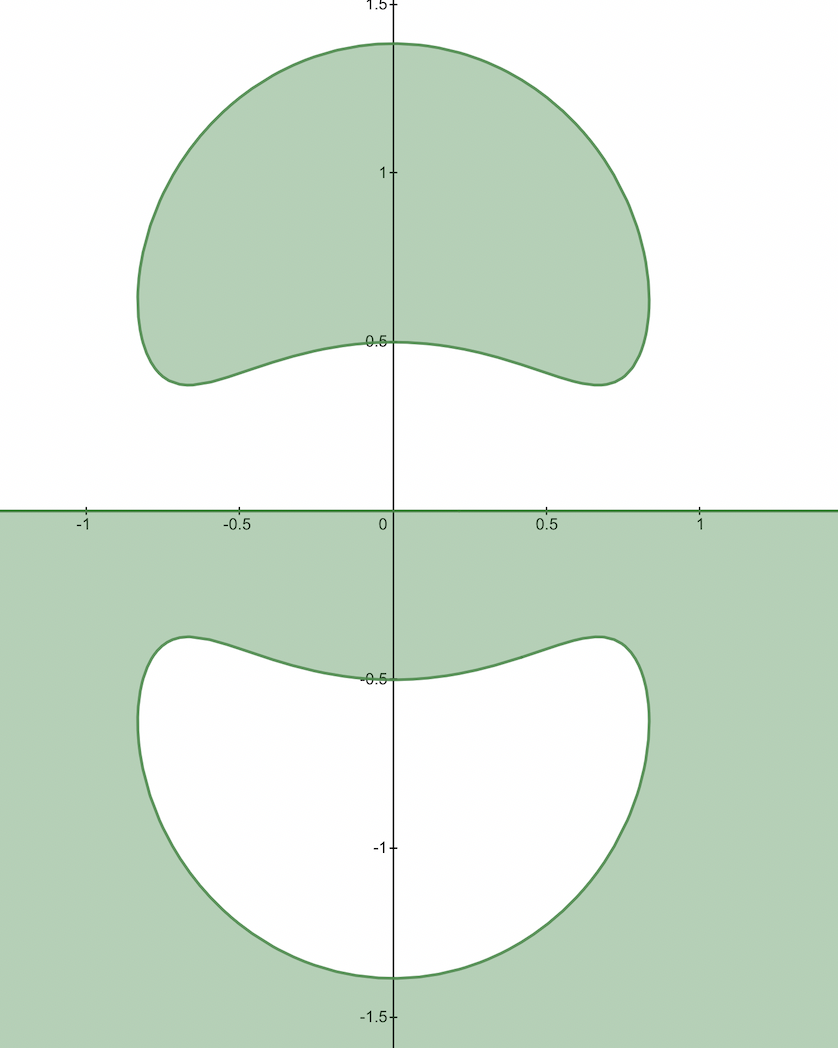}\hspace{0.5cm}
			\label{fig:desmos-graph}}\hspace{3mm}
		\subfigure[]{\includegraphics[width=4cm, height=4cm]{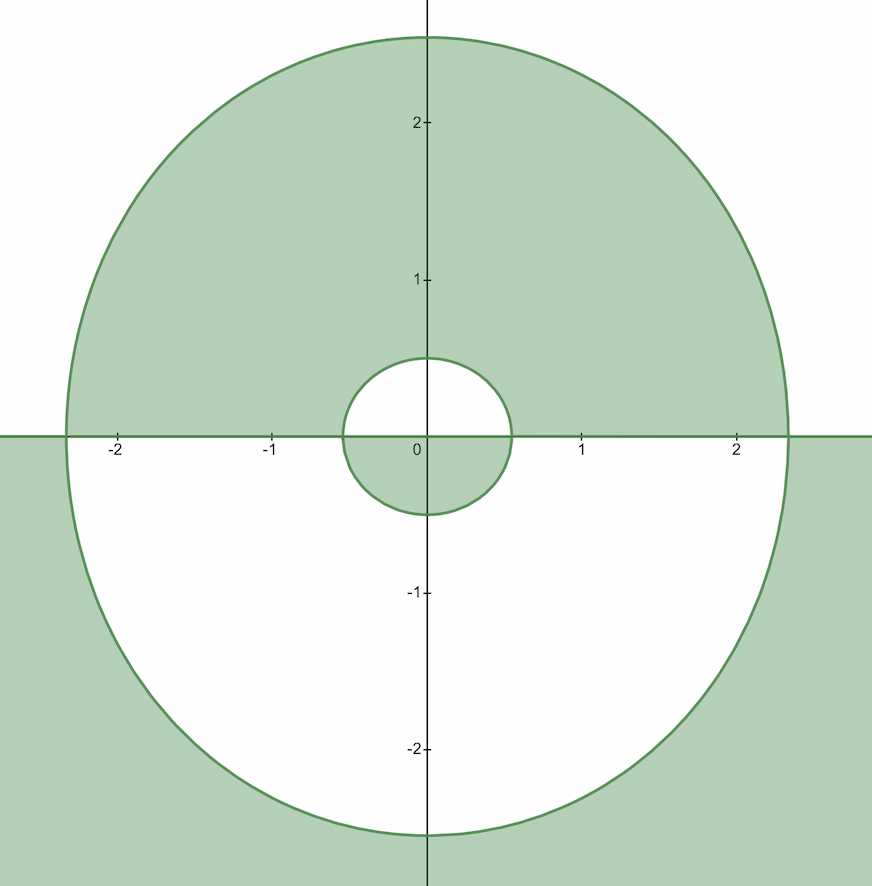}
			\label{fig:desmos-graph-5}}	

		\subfigure[]{\includegraphics[width=4cm,height=4cm]{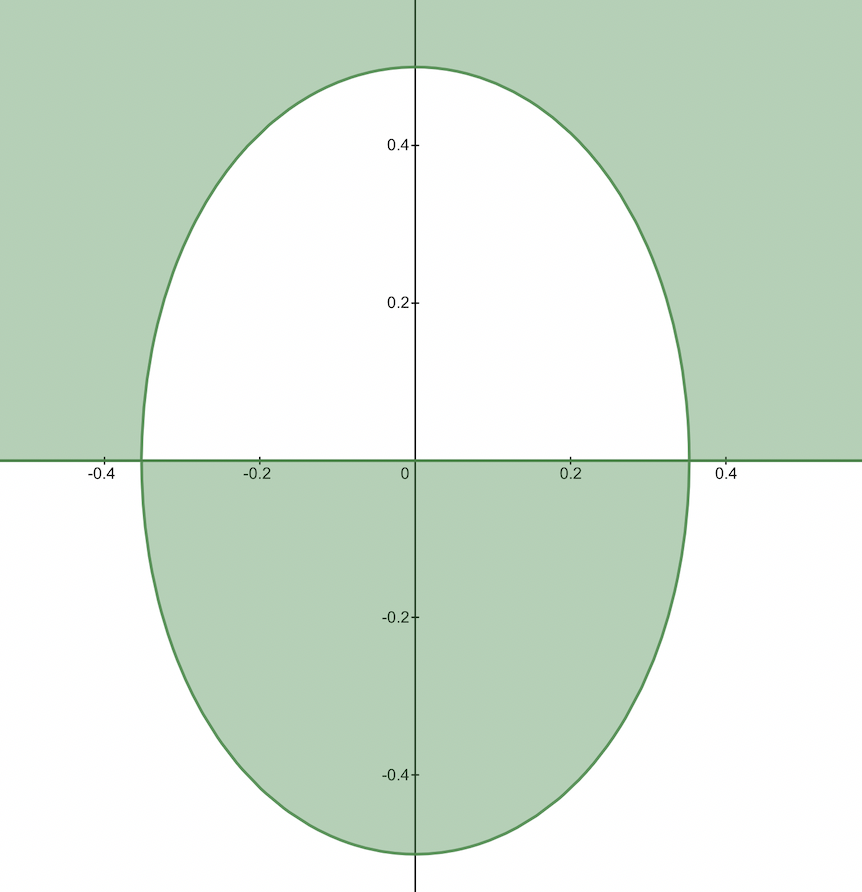}\hspace{0.5cm}
			\label{fig:desmos-graph-3}}\hspace{3mm}
		\subfigure[]{\includegraphics[width=4cm, height=4cm]{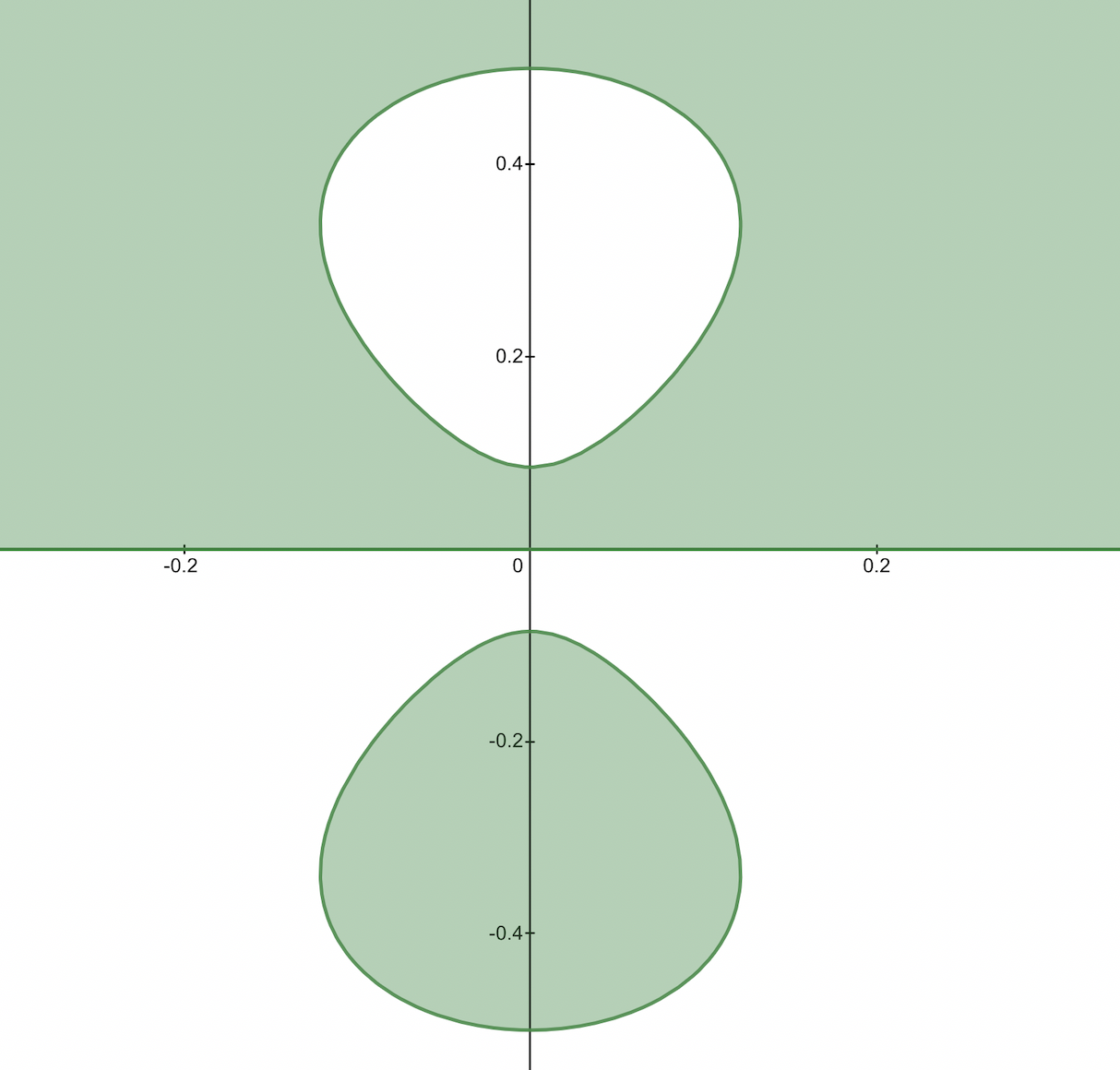}
			\label{fig:desmos-graph-6}}
		\caption{\footnotesize The classification  of $\text{sign Im}\theta$.  In the green regions, $\text{Im}\theta>0$,  which  implies that $|e^{2it\theta}|\to 0$ as $t\to\infty$.While  in the white regions,   $\text{Im}\theta<0$, which implies  $|e^{-2it\theta}|\to 0$ as $t\to\infty$.   The green curves  $\text{Im}\theta=0$ are critical lines between decay and growth regions.  }
		\label{figtheta}
	\end{figure}
	
	Notice that the division above omits two critical cases : $\xi=-\frac{1}{4}$ and $\xi=2$. For the sake of completeness of the discussion, we will place these two cases in Section 5.
	
	For the cases I and IV, there is no stationary phase point on the real axis,
	while for the cases II and III, there exist four and two stationary phase points on
	the real axis, denoted as $\xi_1> \xi_2>\xi_3 > \xi_4$ and $\xi_1>\xi_2$ respectively.

	The jump matrix  $V(k)$  admits the following two  factorizations
	\begin{align}
		V(k)&=\left(\begin{array}{cc}
			1 &  0 \\
			-\bar{r} e^{2it\theta} & 1
		\end{array}\right)\left(\begin{array}{cc}
			1 & r e^{-2it\theta}\\
			0 & 1
		\end{array}\right) \label{V11}\\
		&=\left(\begin{array}{cc}
			1 & \frac{r e^{-2it\theta}}{1-|r|^2}\\
			0& 1
		\end{array}\right)(1-|r|^2)^{-\sigma_3}\left(\begin{array}{cc}
			1 & 0 \\
			\frac{-\bar{r} e^{2it\theta}}{1-|r|^2} & 1
		\end{array}\right).\label{V2}
	\end{align}
	We will utilize  these factorizations to open  the jump contours  so that the oscillating factor $e^{\pm2it\theta}$ are decaying in corresponding region respectively.


	\section{Long-time asymptotics  in   region   without phase point}\label{sec3}
	\subsection{Deformation of the RH problem}\label{subsec3.1}
	One will observe that the long time asymptotic of RHP \ref{RHP2} is influenced by the growth and decay of the oscillatory term. In this subsection, our aim is to introduce a transform of $M(k;x,t)\rightarrow M^{(1)} (k;x,t)$ so that $M^{(1)} (k;x,t)$ is well behaved as $t\rightarrow \infty$ along the characteristic line.
	\subsubsection{Conjugation}
	For $\xi<-\frac{1}{4}$  and  $\xi>2$, we choose the decomposition (\ref{V11}) , (\ref{V2}) respectively. Introduce the notation that will be used later. Define
	\begin{align}
		&\Delta^+=\{ik_n|Im\theta(ik_n)\geq0,k_n \in \mathbb{R}^+\}\nonumber\\
		&\Delta^-=\{ik_n|Im\theta(ik_n)<0,k_n \in \mathbb{R}^+\}
	\end{align}
	and
	\begin{align}
		&\Lambda=\{ik_n||ik_n-i\kappa_1|<\rho\},\hspace{0.3cm} \{\kappa_1,-\kappa_1\}=i\mathbb{R}\cap\{k|Im\theta(k)=0\},\nonumber\\
		&\rho=\frac{1}{2}min\{min|ik_j-ik_l|,min|ik_j-\frac{i}{2}|,min|ik_j-i\kappa_1|\}.\nonumber
	\end{align}
	Denote
	\begin{align}
		L(\xi)=\left\{ \begin{array}{ll}
			\emptyset,   &\text{as } \xi<-1/4,\\[4pt]
			\mathbb{R} , &\text{as }\xi>2,\\
		\end{array}\right.
	\end{align}
where   $\emptyset$ means that  for any function $f(x)$,  we make a definition of engagement
	\begin{align*}
		\int_{\emptyset}f(x)dx=0.
	\end{align*}
	Define the function
	\begin{equation}
		T(k)=\prod_{ik_n\in\Delta^+}\frac{ k+ik_n}{k-ik_n}\delta(k,\xi),\label{T_1}
	\end{equation}
	where
	\begin{equation}
		\delta(k,\xi)=exp\{-i\int_{L(\xi)}\frac{\mu(s)}{s-k}ds\},\ \  \ \ \mu(k)=\frac{1}{2\pi}log(1-|r|^2). \nonumber
	\end{equation}
	
	\begin{Proposition}\label{proT_1}
		The function $T(k)$ defined by (\ref{T_1}) has the following properties:\\
		(a) $T(k)$ is meromorphic in $\mathbb{C}$, and for each $ik_n\in\Delta^+$, $T(k)$ has  simple poles at $ik_n $ and  simple zeros at $-ik_n $; $T(k)$ is analytic and nonzero elsewhere.\\
		(b) $\overline{T(\bar k)}=\frac{1}{T(k)}$;\\
		(c) $\lim_{k\to \infty}T(k)=1$; \\
		(d)   $T(k)$ has boundary values $T_\pm(k)$ satisfying:
		\begin{align}
			&T_+(k)=(1-|r(k)|^2)T_-(k),\hspace{0.5cm}\xi>2, \\
			&T_+(k)=T_-(k),\hspace{0.5cm}\xi<-\frac{1}{4};
		\end{align}
		(e) For $k=\frac{i}{2}$
		\begin{equation}
			T(\frac{i}{2})=\prod_{ik_n\in\Delta^+}\frac{ \frac{i}{2}+ik_n}{\frac{i}{2}-ik_n}\delta(\frac{i}{2},\xi);\nonumber
 		\end{equation}
		(f) As $k\to \frac{i}{2}$, $T(k)$ has asymptotic expansion  as
		\begin{align}
			T(k)&:=T(\frac{i}{2})+T_1(k-\frac{i}{2}) +\mathcal{O}((k-\frac{i}{2})^2)\label{expT0},
		\end{align}
		with
		\begin{align}
		T_1=\sum_{j,s=1;j\neq s}^{\mathcal{N}(\Delta^+)}\left(\frac{-2ik_{n_s}(ik_{n_j}+\frac{i}{2})}{(\frac{i}{2}-ik_{n_j})(\frac{i}{2}-ik_{n_s})^2}\right) J_0+\prod_{ik_n\in\Delta^+}\frac{ \frac{i}{2}+ik_n}{\frac{i}{2}-ik_n}J_1,
		\end{align}
	where
		\begin{equation}
			J_0=\exp\{-\frac{i}{2\pi}\int_{L(\xi)}\frac{ln(1-|r(s)|^2)}{s-\frac{i}{2}}ds\};
		\end{equation}
		\begin{equation}
			J_1=-\frac{i}{2\pi}\int_{L(\xi)}\frac{ln(1-|r(s)|^2)}{(s-\frac{i}{2})^2}ds\}.
		\end{equation}
		\begin{proof}
			Properties (a)-(e) are obtained by simple calculation from the definition of $T(k)$ in (\ref{T_1}). Property (f) is from the  Laurent expansion of $T(k)$ at $k=\frac{i}{2}$.
		\end{proof}
	\end{Proposition}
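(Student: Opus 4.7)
The plan is to decompose $T(k)=B(k)\,\delta(k,\xi)$, where $B(k)=\prod_{n\in\Delta_1^+}\frac{k+k_n}{k-k_n}$ is a finite Blaschke-type rational factor and $\delta(k,\xi)$ is the exponential of a Cauchy-type integral, and then verify the six properties by combining standard facts about each piece. The proof that follows is essentially bookkeeping once this split is in place.

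First I would check (a), (b), (c). Property (a) is immediate: $B$ is rational with the stated simple zeros/poles, and since $r\in H^{1,1}(\mathbb{R})$ (Proposition \ref{pror}) we have $v_1=\frac{1}{2\pi}\log(1-|r|^2)\in L^1\cap L^2$, so the Cauchy integral in $\delta$ is analytic on $\mathbb{C}\setminus L(\xi)$. For (b), one uses $k_n\in i\mathbb{R}_+$, hence $\overline{k_n}=-k_n$, so
\[
\overline{\frac{\bar k+k_n}{\bar k-k_n}}=\frac{k-k_n}{k+k_n},
\]
which gives $\overline{B(\bar k)}=1/B(k)$; reality of $v_1$ on the real contour $L(\xi)\subseteq\mathbb{R}$ yields $\overline{\delta(\bar k,\xi)}=1/\delta(k,\xi)$, and one multiplies the two. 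For (c), the limit at $\infty$ follows since each rational factor tends to $1$ and $1/(s-k)\to 0$ uniformly on $L(\xi)$. Continuity and $T(0)=1$ exploit the symmetry $v_1(-s)=v_1(s)$ (from $|r(-k)|=|r(k)|$) to make $v_1(s)/s$ odd, forcing $\delta(0,\xi)=1$, together with the sign contribution from $B(0)$.

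Next, (d) is obtained by applying the Plemelj--Sokhotski formula to $\delta$. For $\xi>2$, so that $L(\xi)=\mathbb{R}$,
\[
\frac{\delta_+(k)}{\delta_-(k)}=\exp\bigl\{-i\cdot 2\pi i\, v_1(k)\bigr\}=e^{2\pi v_1(k)}=1-|r(k)|^2,
\]
and since $B$ has no singularities on $\mathbb{R}$ this gives $T_+=(1-|r|^2)T_-$; for $\xi<-1/4$ we have $\delta\equiv 1$ so the jump is trivial. Property (e) is immediate substitution.

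Finally, for (f), both $B$ and $\delta$ are analytic in a neighborhood of $k=i/2$ (the $k_n$ lie on the imaginary axis but are assumed distinct from $i/2$, and $L(\xi)\subset\mathbb{R}$ avoids $i/2$), so I would Taylor expand each factor to first order and multiply. The value at $i/2$ is $T(i/2)\,J_0$ with $T(i/2)=\prod(i/2+k_{n_j})/(i/2-k_{n_j})$; the derivative of $\log\delta$ at $i/2$ produces $J_1$, while the derivative of $\log B$ at $i/2$ is
\[
\sum_{s=1}^{\mathcal{N}(\Delta_1^+)}\frac{-2k_{n_s}}{(i/2+k_{n_s})(i/2-k_{n_s})},
\]
and the product rule then gives the displayed sum in (\ref{expT0}) after clearing denominators. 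The main obstacle is strictly (c): verifying $T(0)=1$ cleanly requires the symmetry of $v_1$ at $k=0$ and the exact sign structure of the Blaschke factor to conspire; the rest is routine but the Taylor computation in (f) is the most calculation-intensive step.
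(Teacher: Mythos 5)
Your proof follows the same route as the paper's (whose entire argument is ``simple calculation from the definition'' for (a)--(e) and ``Laurent expansion at $\frac{i}{2}$'' for (f)), merely writing out the details of the Blaschke/Cauchy-integral split, the Plemelj jump for (d), and the logarithmic derivatives for (f). The one loose end you rightly flag in (c) --- namely $\prod_{n\in\Delta_1^+}\frac{0+k_n}{0-k_n}=(-1)^{\#\Delta_1^+}$, so $T(0)=1$ only up to this sign --- is a defect of the statement as printed rather than of your argument.
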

	
	We define a new unknown function $M^{(1)}(k)$ using the function defined above
	\begin{equation}
		M^{(1)}(k)=M^J(k)T^{\sigma_3}(k),\nonumber
	\end{equation}
	then $M^{(1)}$ solves the following RH problem:
	\begin{RHP}\label{RHP3}
		Find a matrix-valued function $ M^{(1)}(k) $ which satisfies:
		
		$\blacktriangleright$ Analyticity: $M^{(1)}(k)$ is analytic in $\mathbb{C}\setminus (\mathbb{R}\bigcup\mathcal{Z})$ and has single poles;
		
		$\blacktriangleright$ Jump condition: $M^{(1)}(k)$ has continuous boundary values $M^{(1)}_\pm(k)$ on $\mathbb{R}$ and
		\begin{equation}
			M_+^{(1)}(k)=M_-^{(1)}(k)V^{(1)}(k,\xi),\hspace{0.5cm}k \in \mathbb{R},
		\end{equation}
		where
		\begin{equation}
			V^{(1)}(k,\xi)=\left\{\begin{array}{ll}\left(\begin{array}{cc}
					1 & 0 \\
					-T^2\overline{r}e^{2it\theta}&  1
				\end{array}\right)\left(\begin{array}{cc}
					1 & T^{-2}re^{-2it\theta} \\
					0 &  1
				\end{array}\right),   &\text{as } \xi<-\frac{1}{4} , k \in \mathbb{R},\\[12pt]
				\left(\begin{array}{cc}
					1 & \frac{r(k)e^{-2it\theta}T_-^{-2}(k)}{1-|r(k)|^2}\\
					0 & 1
				\end{array}\right)\left(\begin{array}{cc}
					1 & 0 \\
					\frac{-\bar{r}(k)e^{2it\theta}T_+^{2}(k)}{1-|r(k)|^2} & 1
				\end{array}\right),   &\text{as } \xi>2 , k \in \mathbb{R};\\[12pt]
			\end{array}\right.\label{jumpv1}
		\end{equation}
		
		$\blacktriangleright$ Asymptotic behaviors:
		\begin{align}
			M^{(1)}(k) = I+\mathcal{O}(k^{-1}),\hspace{0.5cm}k \rightarrow \infty;
		\end{align}
		$\blacktriangleright$ Residue conditions: $M^{(1)}(k)$ has simple poles at each point in $ \mathcal{Z} $ with:
		\begin{align}
			\res_{k=ik_n}M^{(1)}(k)=\left\{\begin{array}{ll}\lim_{k\rightarrow ik_n}M^{(1)}(k)\left(\begin{array}{cc}
					0 & 0\\
					c_n^{-1} e^{2it\theta{(ik_n)}}[(\frac{1}{T})^{'}(ik_n)]^{-2}& 0
				\end{array}\right),   &\text{as } ik_n\in 	\Delta^+,\\[12pt]
				\lim_{k\rightarrow ik_n}M^{(1)}(k)\left(\begin{array}{cc}
					0 & c_nT^{-2}e^{-2it\theta{(ik_n)}}\\
					0 & 0
				\end{array}\right),   &\text{as } ik_n\in 	\Delta^-\bigcup\Lambda;\\[12pt]
			\end{array}
			\right. \nonumber
		\end{align}
								\begin{align}
									\res_{k=-ik_n}M^{(1)}(k)=\left\{\begin{array}{ll}\lim_{k\rightarrow -ik_n}M^{(1)}(k)\left(\begin{array}{cc}
											0 &  	\bar{c}_n^{-1} e^{-2it\theta{(-ik_n)}}[T^{'}(-ik_n)]^{-2}\\
											0 & 0
										\end{array}\right),   &\text{as } ik_n\in 	\Delta^+,\\[12pt]
										\lim_{k\rightarrow -ik_n}M^{(1)}(k)\left(\begin{array}{cc}
											0 & 0\\
											\bar{c}_nT^{2}e^{2it\theta{(-ik_n)}} & 0
										\end{array}\right),   &\text{as } ik_n\in 	\Delta^-\bigcup\Lambda.\\[12pt]
									\end{array}
									\right.\nonumber
								\end{align}
							\end{RHP}

							\subsubsection{A mixed $\bar{\partial}$-RH problem}
							The next step is to introduce factorizations of the jump matrix whose factors admit continuous-but
							not necessarily analytic-extensions off the real axis. The price we pay for this non-analytic transformation is that the new
							unknown matrix has nonzero $\bar{\partial}$-derivatives inside the regions in which the extensions are introduced and satisfies a hybrid
							$\bar{\partial}$-RH problem.
							
							Define the domain and contour as follows:
							\begin{align}
								&\Omega_{2n+1}=\left\lbrace k\in\mathbb{C}|n\pi \leq\arg k \leq n\pi+\varphi \right\rbrace,\nonumber\\
								&\Omega_{2n+2}=\left\lbrace k\in\mathbb{C}|(n+1)\pi -\varphi\leq\arg k \leq (n+1)\pi \right\rbrace,\nonumber
							\end{align}
							where $n=0,1$. And
							\begin{align}
								&\Sigma_k=e^{(k-1)i\pi/2+i\varphi}R_+,\hspace{0.5cm}k=1,3,\nonumber\\
								&\Sigma_k=e^{ki\pi/2-i\varphi}R_+,\hspace{0.5cm}k=2,4,\nonumber
							\end{align}
							which is the boundary of $\Omega_k$ respectively.
							Take  $\varphi $  be  a sufficiently small fixed angle achieving following conditions: \\
							(i) each $\Omega_i$ doesn't intersect $\left\lbrace k\in\mathbb{C}|\text{Im }\theta(k)=0\right\rbrace  $,\\
							(ii) $\cos2\varphi >\frac{(-1+\sqrt{1-\frac{2}{\xi}})^2-2}{2}$ for $\xi<-\frac{1}{4}$;\\
							(iii) $\frac{(1-\sqrt{1-\frac{2}{\xi}})^2-2}{2}<\cos2\varphi<\frac{(1+\sqrt{1-\frac{2}{\xi}})^2-2}{2}$ for $\xi>2$.
							
							A more intuitive expression of the above process can be seen in the figure \ref{figR2}.
							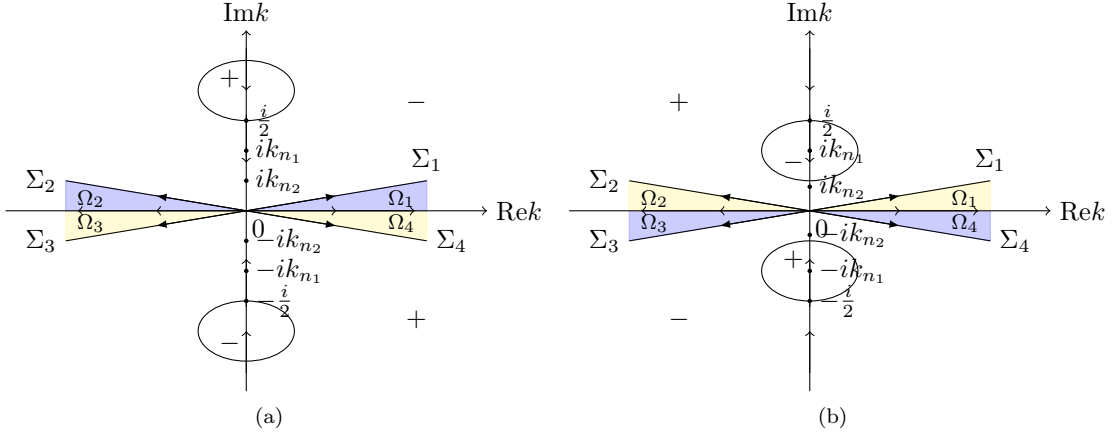
\begin{figure}[h]
								\centering
								\subfigure[]{
									\begin{tikzpicture}[node distance=2cm,  scale = 0.8]
										\draw[yellow!30, fill=yellow!20] (0,0)--(3,-0.5)--(3,0)--(0,0)--(-3,-0.5)--(-3,0)--(0,0);
										\draw[blue!30, fill=blue!20] (0,0)--(3,0.5)--(3,0)--(0,0)--(-3, 0.5)--(-3,0)--(0,0);
										\draw(0,0)--(3,0.5)node[above]{$\Sigma_1$};
										\draw(0,0)--(-3,0.5)node[left]{$\Sigma_2$};
										\draw(0,0)--(-3,-0.5)node[left]{$\Sigma_3$};
										\draw(0,0)--(3,-0.5)node[right]{$\Sigma_4$};
										\draw[->](-4,0)--(4,0)node[right]{ Re$k$};
										\draw[->](0,-3)--(0,3)node[above]{ Im$k$};
										\draw[-latex](0,0)--(-1.5,-0.25);
										\draw[-latex](0,0)--(-1.5,0.25);
										\draw[-latex](0,0)--(1.5,0.25);
										\draw[-latex](0,0)--(1.5,-0.25);
										\coordinate (C) at (-0.2,2.2);
										\coordinate (D) at (2.2,0.2);
										\fill (D) circle (0pt) node[right] {\footnotesize $\Omega_1$};
										\coordinate (J) at (-2.2,-0.2);
										\fill (J) circle (0pt) node[left] {\footnotesize $\Omega_3$};
										\coordinate (k) at (-2.2,0.2);
										\fill (k) circle (0pt) node[left] {\footnotesize $\Omega_2$};
										\coordinate (k) at (2.2,-0.2);
										\fill (k) circle (0pt) node[right] {\footnotesize $\Omega_4$};
										\coordinate (I) at (0.2,0);
										\fill (I) circle (0pt) node[below] {$0$};
										\draw[  ][->](0,0)--(-1.5,0);
										\draw[ ][->](-1.5,0)--(-2.8,0);
										\draw[ ][->](0,0)--(1.5,0);
										\draw[ ][->](1.5,0)--(2.8,0);
										\draw[ ][->](0,2.7)--(0,2);
										\draw[ ][->](0,1.6)--(0,0.8);
										\draw[ ][->](0,-2.7)--(0,-2);
										\draw[ ][->](0,-1.6)--(0,-0.8);
										\draw[ ](0,2) ellipse (0.8 and 0.5);
										\draw[ ](0,-2) ellipse (0.8 and 0.5);
										\coordinate (A) at (-0.6,2.2);
										\fill (A) circle (0pt) node[right] {$+$};
										\coordinate (B) at (-0.6,-2.2);
										\fill (B) circle (0pt) node[right] {$-$};
										\coordinate (C) at (2.5,1.8);
										\fill (C) circle (0pt) node[right] {$-$};
										\coordinate (D) at (2.5,-1.8);
										\fill (D) circle (0pt) node[right] {$+$};
										\coordinate (E) at (0,1.5);
										\fill (E) circle (1pt) ;
										\coordinate (F) at (0,-1.5);
										\fill (F) circle (1pt) ;
										\fill (E) circle (1pt)node[right] {$\frac{i}{2}$};
										\fill (F) circle (1pt)node[right] {$-\frac{i}{2}$};
										\coordinate (G) at (0,-1);
										\fill (G) circle (1pt)node[right] {$-ik_{n_1}$}; 
										\coordinate (H) at (0,1);
										\fill (H) circle (1pt) node[right] {$ik_{n_1}$}; 
										\coordinate (I) at (0,1);
										\fill (I) circle (1pt) ;
										\coordinate (J) at (0,-1);
										\fill (J) circle (1pt) ;
										\coordinate (K) at (0,-0.5);
										\fill (K) circle (1pt) ;
										\coordinate (L) at (0,0.5);
										\fill (L) circle (1pt) ;
										\fill (K) circle (1pt)node[right] {$-ik_{n_2}$}; 
										\fill (L) circle (1pt) node[right] {$ik_{n_2}$}; 

								\end{tikzpicture}}
								\subfigure[]{
									\begin{tikzpicture}[node distance=2cm,  scale = 0.8]
										\draw[blue!30, fill=blue!20] (0,0)--(3,-0.5)--(3,0)--(0,0)--(-3,-0.5)--(-3,0)--(0,0);
										\draw[yellow!30, fill=yellow!20] (0,0)--(3,0.5)--(3,0)--(0,0)--(-3, 0.5)--(-3,0)--(0,0);
										\draw(0,0)--(3,0.5)node[above]{$\Sigma_1$};
										\draw(0,0)--(-3,0.5)node[left]{$\Sigma_2$};
										\draw(0,0)--(-3,-0.5)node[left]{$\Sigma_3$};
										\draw(0,0)--(3,-0.5)node[right]{$\Sigma_4$};
										\draw[->](-4,0)--(4,0)node[right]{ Re$k$};
										\draw[->](0,-3)--(0,3)node[above]{ Im$k$};
										\draw[-latex](0,0)--(-1.5,-0.25);
										\draw[-latex](0,0)--(-1.5,0.25);
										\draw[-latex](0,0)--(1.5,0.25);
										\draw[-latex](0,0)--(1.5,-0.25);
										\coordinate (C) at (-0.2,2.2);
										\coordinate (D) at (2.2,0.2);
										\fill (D) circle (0pt) node[right] {\footnotesize $\Omega_1$};
										\coordinate (J) at (-2.2,-0.2);
										\fill (J) circle (0pt) node[left] {\footnotesize $\Omega_3$};
										\coordinate (k) at (-2.2,0.2);
										\fill (k) circle (0pt) node[left] {\footnotesize $\Omega_2$};
										\coordinate (k) at (2.2,-0.2);
										\fill (k) circle (0pt) node[right] {\footnotesize $\Omega_4$};
										\coordinate (I) at (0.2,0);
										\fill (I) circle (0pt) node[below] {$0$};
										\draw[  ][->](0,0)--(-1.5,0);
										\draw[ ][->](-1.5,0)--(-2.8,0);
										\draw[ ][->](0,0)--(1.5,0);
										\draw[ ][->](1.5,0)--(2.8,0);
										\draw[ ][->](0,2.7)--(0,2);
										\draw[ ][->](0,1.6)--(0,0.8);
										\draw[ ][->](0,-2.7)--(0,-2);
										\draw[ ][->](0,-1.6)--(0,-0.8);
										\draw[ ](0,1) ellipse (0.8 and 0.5);
										\draw[ ](0,-1) ellipse (0.8 and 0.5);
										\coordinate (A) at (-0.6,0.8);
										\fill (A) circle (0pt) node[right] {$-$};
										\coordinate (B) at (-0.6,-0.8);
										\fill (B) circle (0pt) node[right] {$+$};
										\coordinate (C) at (-2.5,1.8);
										\fill (C) circle (0pt) node[right] {$+$};
										\coordinate (D) at (-2.5,-1.8);
										\fill (D) circle (0pt) node[right] {$-$};
										\coordinate (E) at (0,1.5);
										\fill (E) circle (1pt) ;
										\coordinate (F) at (0,-1.5);
										\fill (F) circle (1pt) ;
										\fill (E) circle (1pt)node[right] {$\frac{i}{2}$};
										\fill (F) circle (1pt)node[right] {$-\frac{i}{2}$};
										\coordinate (G) at (0,-1);
										\fill (G) circle (1pt)node[right] {$-ik_{n_1}$}; 
										\coordinate (H) at (0,1);
										\fill (H) circle (1pt) node[right] {$ik_{n_1}$}; 
										\coordinate (I) at (0,1);
										\fill (I) circle (1pt) ;
										\coordinate (J) at (0,-1);
										\fill (J) circle (1pt) ;
										\coordinate (K) at (0,-0.4);
										\fill (K) circle (1pt) ;
										\coordinate (L) at (0,0.4);
										\fill (L) circle (1pt) ;
										\fill (K) circle (1pt)node[right] {$-ik_{n_2}$}; 
										\fill (L) circle (1pt) node[right] {$ik_{n_2}$}; 

								\end{tikzpicture}}
								\caption{\footnotesize  Figures  (a) and (b)  corresponding to  the cases $\xi<-1/4$ and  $\xi>2$ respectively.   The blue regions  have same decay/grow  properties; The same for yellow  regions.}
								\label{figR2}
							\end{figure}

							Define a new unknown function
							\begin{equation}
								R^{(2)}(k,\xi)=\left\{\begin{array}{lll}
									\left(\begin{array}{cc}
										1 & R_j(k,\xi)e^{-2it\theta}\\
										0 & 1
									\end{array}\right), & \xi<-\frac{1}{4},k\in \Omega_j,j=1,2;\\
									\\
									\left(\begin{array}{cc}
										1 & 0\\
										R_j(k,\xi)e^{2it\theta} & 1
									\end{array}\right),  &\xi<-\frac{1}{4},k\in \Omega_j,j=3,4;\\
									\\
									\left(\begin{array}{cc}
										1 & 0\\
										R_j(k,\xi)e^{2it\theta} & 1
									\end{array}\right), & \xi>2,k\in \Omega_j,j=1,2;\\
									\\
									\left(\begin{array}{cc}
										1 & R_j(k,\xi)e^{-2it\theta}\\
										0 & 1
									\end{array}\right),  &\xi>2,k\in \Omega_j,j=3,4;\\
									\\
									I,  &elsewhere,\\
								\end{array}\right.\label{R(2)+}
							\end{equation}
							where the $R_j$ are given in the following Proposition.
							\begin{Proposition}\label{proR}
								The functions $R_j$: $\bar{\Omega}_j\to C$, $j=1,2,3,4$ have boundary values as follows:\\
								For $\xi \in(-\infty,-1/4)$,
								\begin{align}
									&R_j(k,\xi)=\Bigg\{\begin{array}{ll}
										-r(k)T^{-2}(k) & k\in \mathbb{R}^i,\\
										-r(0)T^{-2}(0)  &k\in \Sigma_j,\\
									\end{array} \hspace{0.6cm}j=1,2 \nonumber \\
									&R_j(k,\xi)=\Bigg\{\begin{array}{ll}
										-\bar{r}(k)T^{2} (k)&k\in \mathbb{R}^i, \\
										-\bar{r}(0)T^{2} (0) &k\in \Sigma_j,\\
									\end{array} \hspace{0.6cm}j=3,4.\nonumber
								\end{align}
								For $\xi \in(2,+\infty)$,
								\begin{align}
									&R_j(k,\xi)=\Bigg\{\begin{array}{ll}
										\frac{\bar{r}}{1-|r(k)|^2}T_+^{2} & k\in \mathbb{R}^i,\\
										\bar{r}(0)exp\{\frac{1}{\pi i}\int_{\mathbb{R}}\frac{\log{(1-|r|^2)}}{s}ds\}  &k\in \Sigma_j,\\
									\end{array} \hspace{0.6cm} j=1,2 \nonumber\\
									&R_j(k,\xi)=\Bigg\{\begin{array}{ll}
										\frac{r}{1-|r(k)|^2}T_-^{-2} &k\in \mathbb{R}^i, \\
										-r(0)exp\{\frac{1}{\pi i}\int_{\mathbb{R}}\frac{\log{(1-|r|^2)}}{s}ds\} &k\in \Sigma_j,\\
									\end{array} \hspace{0.6cm}j=3,4. \nonumber
								\end{align}	
								where 	\begin{align}
									i=\Bigg\{\begin{array}{ll}
										+, & j=1,4,\\
										-,  &j=2,3.\\
									\end{array} \nonumber
								\end{align}	
								$R_j$  have the following properties:
								for $j=1,2,3,4,$
								\begin{align}
									&|\bar{\partial}R_j(k)|\lesssim|r'(|k|)|+|k|^{-1/2}, \  \text{for all $k\in \Omega_j$, j=1,2,}\label{dbarRj}\\
									&|\bar{\partial}R_j(k)|\lesssim|\bar{r}{'}(|k|)|+|k|^{-1/2}, \  \text{for all $k\in \Omega_j$, j=3,4}\label{dbarRj2}\\
									&\bar{\partial}R_j(k)=0,\hspace{0.5cm}\text{if } k\in elsewhere. \label{dbarRj0}
								\end{align}
								\begin{proof}
									The proof is similar with \cite{YF3}. Here, we only take $R_1(k)$ of Case IV as an example. By Plemelj formula, the boundary value on $\mathbb{R}^+ $ of $R_1(k)$ can be rewritten as
									\begin{align}
										\tilde{r}(k):=\frac{\bar{r}(k)}{1-|r(k)|^2}T_+^{2} =\prod_{ik_n\in\Delta^+}\left(\frac{ k+ik_n}{k-ik_n}\right)^2\bar{r}(k)exp\{\frac{1}{\pi i}\int_{\mathbb{R}}\frac{\log{(1-|r|^2)}}{s}ds\}.\nonumber
									\end{align}	
									Construct a continuous function $R_1(k)$ on $\Omega_1$:
									\begin{align}
										R_1(k)=\left(\tilde{r}(|k|)-\tilde{r}(0)\right)cos(\frac{\pi}{2\varphi}\phi)+\tilde{r}(0).
									\end{align}	
									Denote $k=\rho e^{i\phi}$, then $\bar{\partial}=\frac{e^{i \phi}}{2}\left(\partial_{\rho}+i \rho^{-1} \partial_\phi\right)$. So
									\begin{align}
										\bar{\partial}R_1(k)&=\frac{1}{2}\overline{r^{'}(|k|)}e^{i\phi}\prod_{ik_n\in\Delta^+}\left(\frac{ k+ik_n}{k-ik_n}\right)^2exp\{\frac{1}{\pi i}\int_{\mathbb{R}}\frac{\log{(1-|r|^2)}}{s}ds\}cos(\frac{\pi}{2\varphi}\phi)\nonumber\\
										&\quad-\frac{1}{2}e^{i\phi}\rho^{-1}\left(\tilde{r}(|k|)-\tilde{r}(0)\right)sin\left(\frac{\pi}{2\varphi}\phi\right)\frac{\pi}{2\varphi}.
									\end{align}	
									By Holder inequality, we bound the second term and then get the estimates.
								\end{proof}
							\end{Proposition}
							
							Make the matrix transformation:
							\begin{align}
								M^{(2)}(k)=M^{(1)}(k)R^{(2)}(k),\nonumber
							\end{align}
							then $M^{(2)}$ solves the mixed $\bar{\partial}$-RH problem:
							
							\begin{RHP}\label{RHP4}
								Find a matrix valued function  $ M^{(2)}(k)$ with following properties:
								
								$\blacktriangleright$ Analyticity:  $M^{(2)}(k)$ is continuous in $\mathbb{C}\backslash(\Sigma^{(2)}\cup\mathcal{Z})$, where $\Sigma^{(2)}=\cup_{j=1}^4\Sigma_j$;
								
								$\blacktriangleright$ Jump condition: $M^{(2)}(k)$ has continuous boundary values $M^{(2)}_\pm(k)$ on $\Sigma^{(2)}$ and
								\begin{equation}
									M^{(2)}_{+}(k)=M^{(2)}_{-}(k)V^{(2)}(k),\hspace{0.5cm}k \in \Sigma^{(2)},
								\end{equation}
								where
								\begin{align}
									V^{(2)}(k,\xi)=\Bigg\{\begin{array}{ll}
										R^{(2)}(k)^{-1}|_{\Sigma_{1}\cup\Sigma_{3}}&\text{as } 	k\in\Sigma_{1}\cup\Sigma_{3};\\[12pt]
										R^{(2)}(k)|_{\Sigma_{2}\cup\Sigma_{4}}&\text{as } 	k\in\Sigma_{2}\cup\Sigma_{4};\\[12pt]
									\end{array}\label{jumpv2}
								\end{align}
								
								$\blacktriangleright$ Asymptotic behaviors:
								\begin{align}
									M^{(2)}(k) = I+\mathcal{O}(k^{-1}),\hspace{0.5cm}k \rightarrow \infty;
								\end{align}
								
								$\blacktriangleright$ $\bar{\partial}$-Derivative: For $k\in\mathbb{C}$
								we have
								\begin{align}
									\bar{\partial}M^{(2)}(k)=M^{(2)}(k)\bar{\partial}R^{(2)}(k),
								\end{align}
								where
								\begin{equation}
									\bar{\partial}R^{(2)}(k,\xi)=\left\{\begin{array}{lll}
										\left(\begin{array}{cc}
											0 & \bar{\partial}R_j(k,\xi)e^{-2it\theta}\\
											0 & 0
										\end{array}\right), & \xi<-\frac{1}{4},k\in \Omega_j,j=1,2;\\
										\\
										\left(\begin{array}{cc}
											0 & 0\\
											\bar{\partial}R_j(k,\xi)e^{2it\theta} & 0
										\end{array}\right),  &\xi<-\frac{1}{4},k\in \Omega_j,j=3,4;\\
										\\
										\left(\begin{array}{cc}
											0 & 0\\
											\bar{\partial}R_j(k,\xi)e^{2it\theta} & 0
										\end{array}\right), & \xi>2,k\in \Omega_j,j=1,2;\\
										\\
										\left(\begin{array}{cc}
											0 & \bar{\partial}R_j(k,\xi)e^{-2it\theta}\\
											0 & 0
										\end{array}\right),  &\xi>2,k\in \Omega_j,j=3,4;\\
										\\
										0,  &elsewhere;\\
									\end{array}\right.\label{DBARR1}
								\end{equation}
								
								$\blacktriangleright$ Residue conditions: $M^{(2)}$(k) has the same residue conditions with the RHP \ref{RHP3}.
								
							\end{RHP}
							
							In order to solve the RHP \ref{RHP4}, we decompose it into a pure RH problem  $ M^{(R)}(k)$  with $\bar\partial R^{(2)}\equiv0$
							and a pure $\bar{\partial}$ problem $ M^{(3)}(k)$ with $\bar\partial R^{(2)}\not=0$, that is,
							\begin{equation}
								M^{(2)}(k)=M^{(3)}(k) M^{(R)}(k),\label{M2}
							\end{equation}
							where  the  matrix function $M^{(R)}$ satisfies
							\begin{RHP}\label{RHP5}
								Find a matrix valued function  $ M^{(R)}(k)$ admits same condition as RHP \ref{RHP4}.
								
								$\blacktriangleright$ Analyticity:  $M^{(R)}(k)$ is meromorphic in $\mathbb{C} \backslash\Sigma^{(2)}$;
								
								$\blacktriangleright$ Jump condition: $M^{(R)}(k)$ has the same jump matrix
								with $M^{(2)}(k)$ ;
								
								$\blacktriangleright$ Asymptotic behaviors:
								\begin{align}
									M^{(R)}(k) =I+\mathcal{O}(k^{-1}),\hspace{0.5cm}k \rightarrow \infty;
								\end{align}
								
								$\blacktriangleright$ Residue conditions: $M^{(R)}$(k) has the same residue conditions with the RHP \ref{RHP3}.
								
							\end{RHP}

							\subsection{Analysis on a pure RH problem}
							\subsubsection{Classify the poles}
							
							In this section, our aim is to convert the poles away from $\left\lbrace k\in\mathbb{C}|\text{Im }\theta(k)=0\right\rbrace  $ into jumps.
							
					Define
							\begin{equation}
								G(k)=\left\{ \begin{array}{ll}
									\left(\begin{array}{cc}
										1 & 0\\
										-c_n^{-1}T(k)e^{2it\theta(ik_n)}[(\frac{1}{T})^{'}(ik_n)]^{-1} & 1
									\end{array}\right),   &\text{as } 	k\in\partial \mathcal{D}_n,ik_n\in\Delta^+;\\[12pt]
									\left(\begin{array}{cc}
										1 & c_nT^{-2}e^{-2it\theta(ik_n)}(k-ik_n)^{-1}\\
										0 & 1
									\end{array}\right),   &\text{as } k\in\partial \mathcal{D}_n,ik_n\in\Delta^-;\\[12pt]
									\left(\begin{array}{cc}
										1 & -\bar{c}_n^{-1}T^{-1}e^{-2it\theta(-ik_n)}[T^{'}(-ik_n)]^{-1}\\
										0 & 1
									\end{array}\right),   &\text{as } 	k\in\partial \bar{\mathcal{D}}_n,ik_n\in\Delta^+;\\[12pt]
									\left(\begin{array}{cc}
										1 & 0\\
										\bar{c}_nT^2e^{2it\theta(-ik_n)}(k+ik_n) ^{-1}& 1
									\end{array}\right),   &\text{as } k\in\partial \bar{\mathcal{D}}_n,ik_n\in\Delta^-;\\[12pt]
									I,   &\text{elsewhere }.
												\end{array}\right.\label{jumpG}
											\end{equation}
										and make a  transformation
							\begin{equation}
								M_R(k)=M^{(R)}(k)G(k),
							\end{equation}
then  $M_R(k)$  satisfies the following RH problem:
											
											\begin{RHP}\label{RHP6}
												Find a matrix valued function  $ M_R(k)$ with following properties:
												
												$\blacktriangleright$ Analyticity:  $M_R(k)$ is meromorphic in $\mathbb{C}\backslash\Sigma_R$;
												
												$\blacktriangleright$ Jump condition:
												\begin{equation}
													M_{+,R}(k)=M_{-,R}(k)V_R(k),  \hspace{0.5cm}k \in \Sigma_R;\nonumber
												\end{equation}
												where
												\begin{align}
													V_R(k)=\Bigg\{\begin{array}{ll}
														V^{(2)}(k),&\text{as } 	k\in\Sigma^{(2)};\\[12pt]
														G(k),&\text{as } 	k\in\bigcup_{\{n|ik_n\in (\mathbb{C}^+\cap\mathcal{Z})\backslash\Lambda\}}\left(\partial\mathcal{D}_n\cup\partial\bar{\mathcal{D}}_n\right); \nonumber
													\end{array}
												\end{align}
												The $\Sigma_R$ is shown in Figure \ref{figVR}.
												
												$\blacktriangleright$ Asymptotic behaviors:
												\begin{align}
													M_{R}(k) =& I+\mathcal{O}(k^{-1}),\hspace{0.5cm}k \rightarrow \infty; \nonumber
												\end{align}
												
												$\blacktriangleright$ Residue conditions:
												\begin{align}
													&\res_{k=ik_n}M_R(k)=\lim_{k\rightarrow ik_n}M_R(k)\left(\begin{array}{cc}
														0 & c_nT^{-2}e^{-2it\theta{(ik_n)}}\\
														0 & 0
													\end{array}\right),  \label{resMR} &\text{as } \hspace{0.2cm}ik_n\in \Lambda;
												\end{align}
												\begin{align}
													&\res_{k=-ik_n}M_R(k)=\lim_{k\rightarrow -ik_n}M_R(k)\left(\begin{array}{cc}
														0 & 0\\
														\bar{c}_nT^{2}e^{2it\theta{(-ik_n)}} & 0
													\end{array}\right),   &\text{as } \hspace{0.2cm} ik_n\in \Lambda;
												\end{align}
											\end{RHP}
											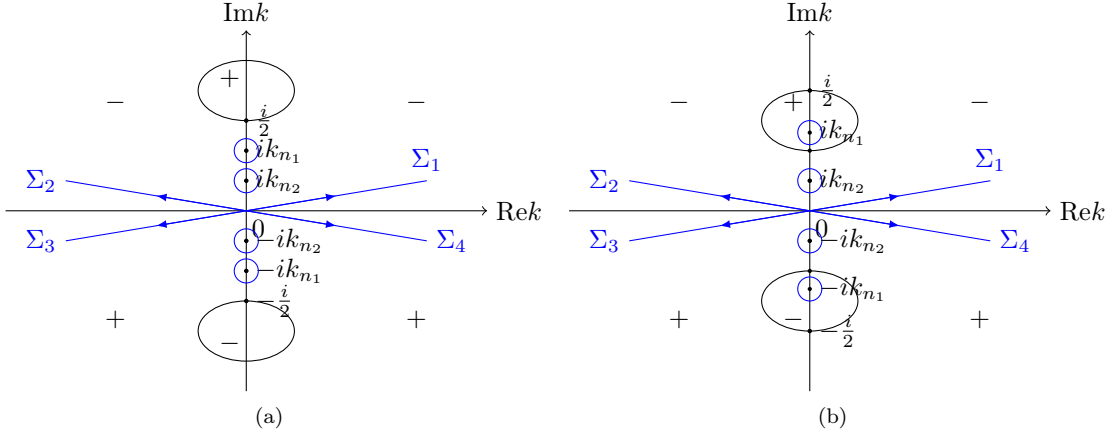
\begin{figure}[h]
												\centering
													\subfigure[]{
												\begin{tikzpicture}[node distance=2cm,  scale = 0.8]
													\draw[->](-4,0)--(4,0)node[right]{ Re$k$};
													\draw[->](0,-3)--(0,3)node[above]{ Im$k$};
													
													\coordinate (I) at (0.2,0);
													\fill (I) circle (0pt) node[below] {$0$};
													\draw[blue](0,0)--(3,0.5)node[above]{$\Sigma_1$};
													\draw[blue](0,0)--(-3,0.5)node[left]{$\Sigma_2$};
													\draw[blue](0,0)--(-3,-0.5)node[left]{$\Sigma_3$};
													\draw[blue](0,0)--(3,-0.5)node[right]{$\Sigma_4$};
													\draw[blue][-latex](0,0)--(-1.5,-0.25);
													\draw[blue][-latex](0,0)--(-1.5,0.25);
													\draw[blue][-latex](0,0)--(1.5,0.25);
													\draw[blue][-latex](0,0)--(1.5,-0.25);
													\draw[ ](0,2) ellipse (0.8 and 0.5);
													\draw[ ](0,-2) ellipse (0.8 and 0.5);
													\coordinate (A) at (-0.6,2.2);
													\fill (A) circle (0pt) node[right] {$+$};
													\coordinate (A1) at (-2.5,-1.8);
													\fill (A1) circle (0pt) node[right] {$+$};
													\coordinate (B) at (-0.6,-2.2);
													\fill (B) circle (0pt) node[right] {$-$};
													\coordinate (C) at (2.5,1.8);
													\fill (C) circle (0pt) node[right] {$-$};
													\coordinate (C1) at (-2.5,1.8);
													\fill (C1) circle (0pt) node[right] {$-$};
													\coordinate (D) at (2.5,-1.8);
													\fill (D) circle (0pt) node[right] {$+$};
													\coordinate (E) at (0,1.5);
													\fill (E) circle (1pt) ;
													\coordinate (F) at (0,-1.5);
													\fill (F) circle (1pt) ;
													\fill (E) circle (1pt)node[right] {$\frac{i}{2}$};
													\fill (F) circle (1pt)node[right] {$-\frac{i}{2}$};
													\coordinate (G) at (0,-1);
													\fill (G) circle (1pt)node[right] {$-ik_{n_1}$}; 
													\coordinate (H) at (0,1);
													\fill (H) circle (1pt) node[right] {$ik_{n_1}$}; 
													\coordinate (I) at (0,1);
													\fill (I) circle (1pt) ;
													\coordinate (J) at (0,-1);
													\fill (J) circle (1pt) ;
													\coordinate (K) at (0,-0.5);
													\fill (K) circle (1pt) ;
													\coordinate (L) at (0,0.5);
													\fill (L) circle (1pt) ;
													\draw[blue] (0,1) circle (0.2);
													\draw[blue] (0,-1) circle (0.2);
													\draw[blue] (0,0.5) circle (0.2);
													\draw[blue] (0,-0.5) circle (0.2);
													\fill (K) circle (1pt)node[right] {$-ik_{n_2}$};
													\fill (L) circle (1pt) node[right] {$ik_{n_2}$};
												\end{tikzpicture}}
											\centering
											\subfigure[]{
												\begin{tikzpicture}[node distance=2cm,  scale = 0.8]
													\draw[->](-4,0)--(4,0)node[right]{ Re$k$};
												\draw[->](0,-3)--(0,3)node[above]{ Im$k$};
												\coordinate (I) at (0.2,0);
												\fill (I) circle (0pt) node[below] {$0$};
												\draw[blue](0,0)--(3,0.5)node[above]{$\Sigma_1$};
												\draw[blue](0,0)--(-3,0.5)node[left]{$\Sigma_2$};
												\draw[blue](0,0)--(-3,-0.5)node[left]{$\Sigma_3$};
												\draw[blue](0,0)--(3,-0.5)node[right]{$\Sigma_4$};
												\draw[blue][-latex](0,0)--(-1.5,-0.25);
												\draw[blue][-latex](0,0)--(-1.5,0.25);
												\draw[blue][-latex](0,0)--(1.5,0.25);
												\draw[blue][-latex](0,0)--(1.5,-0.25);
												\draw[ ](0,1.5) ellipse (0.8 and 0.5);
												\draw[ ](0,-1.5) ellipse (0.8 and 0.5);
												\coordinate (A) at (-0.6,1.8);
												\fill (A) circle (0pt) node[right] {$+$};
												\coordinate (A1) at (-2.5,-1.8);
												\fill (A1) circle (0pt) node[right] {$+$};
												\coordinate (B) at (-0.6,-1.8);
												\fill (B) circle (0pt) node[right] {$-$};
												\coordinate (C) at (2.5,1.8);
												\fill (C) circle (0pt) node[right] {$-$};
												\coordinate (C1) at (-2.5,1.8);
												\fill (C1) circle (0pt) node[right] {$-$};
												\coordinate (D) at (2.5,-1.8);
												\fill (D) circle (0pt) node[right] {$+$};
												\coordinate (E) at (0,2);
												\fill (E) circle (1pt) ;
												\coordinate (F) at (0,-2);
												\fill (F) circle (1pt) ;
												\fill (E) circle (1pt)node[right] {$\frac{i}{2}$};
												\fill (F) circle (1pt)node[right] {$-\frac{i}{2}$};
												\coordinate (G) at (0,-1.3);
												\fill (G) circle (1pt)node[right] {$-ik_{n_1}$}; 
												\coordinate (H) at (0,1.3);
												\fill (H) circle (1pt) node[right] {$ik_{n_1}$}; 
												\coordinate (I) at (0,1);
												\fill (I) circle (1pt) ;
												\coordinate (J) at (0,-1);
												\fill (J) circle (1pt) ;
												\coordinate (K) at (0,-0.5);
												\fill (K) circle (1pt) ;
												\coordinate (L) at (0,0.5);
												\fill (L) circle (1pt) ;
												\draw[blue] (0,1.3) circle (0.2);
												\draw[blue] (0,-1.3) circle (0.2);
												\draw[blue] (0,0.5) circle (0.2);
												\draw[blue] (0,-0.5) circle (0.2);
												\fill (K) circle (1pt)node[right] {$-ik_{n_2}$};
												\fill (L) circle (1pt) node[right] {$ik_{n_2}$};
											\end{tikzpicture}}
												\caption{\footnotesize  (a), (b) show the Jump path of $M_R$ under the condition $\xi<-1/4$ and  $\xi>2$ respectively.  $\Sigma_R$ composed of the blue circles and lines in the figure.}
												\label{figVR}
											\end{figure}
	                                     Next, we prove the existence and uniqueness of solution of the above RHP \ref{RHP6}.
											
											\begin{Proposition}
												For any given scattering data $\mathcal{D}=\{r(k),\{ik_n,c_n\}_{n\in \aleph}\}$, the solution $M_R(k)$ of the above RH problem exists uniquely and is equivalent to the RHP \ref{RHP2} corresponding to the reflection-free N soliton solution of the corrected scattering data $\tilde{\mathcal{D}}=\{\tilde{r}=0,\{ik_n,\tilde{c}_n\}_{n\in \aleph}\}$ by a display transformation, where the correlation function $\tilde{c}_n=c_n\delta(ik_n)$.
												\begin{proof}
													$M_R(k)$ contains a number of jump lines consisting of small circles and poles in $\Lambda$. Take a transformation
													\begin{equation}
														\hat{M}(k)=M_R(k)G^{-1}(k)\left(\prod_{ik_n\in\Delta^+}\frac{ k+ik_n}{k-ik_n}\right)^{-\sigma_3},
													\end{equation}
													 then   \\
													a. $\hat{M}(k)$ is asymptotic to I when $k \rightarrow\infty$; \\
													b. $\hat{M}(k)$ has no jump on the small circles ; \\
													c. When $ik_n\not\in \Lambda$, the transformation converts the jump into the poles. For $ik_n\in \Lambda$,
													\begin{align}
														\nonumber &\res_{k=ik_n}\hat{M}(k)=\res_{k=ik_n}M_R(k)G^{-1}(k)\left(\prod_{ik_n\in\Delta^+}\frac{ k+ik_n}{k-ik_n}\right)^{-\sigma_3}\\ \nonumber
														=&\lim_{k \rightarrow ik_n}\hat{M}(k)\left(\prod_{ik_n\in\Delta^+}\frac{ k+ik_n}{k-ik_n}\right)^{-\sigma_3}\left(\begin{array}{cc}
															0 & c_nT^{-2}e^{-2it\theta{(ik_n)}}\\
															0 & 0
														\end{array}\right)G^{-1}(k)\left(\prod_{ik_n\in\Delta^+}\frac{ k+i}{k-ik_n}\right)^{-\sigma_3}\nonumber\\ =&\lim_{k \rightarrow ik_n}\hat{M}(k)\left(\begin{array}{cc}
															0 & c_n\delta(ik_n) e^{-2it\theta{(ik_n)}}\\
															0 & 0
														\end{array}\right),
													\end{align}
													thus, $\tilde{c}_n=c_n\delta(ik_n)$;
													
													So far, we have proved that $\hat{M}(k)$ is the solution of RHP \ref{RHP2} under the corrected scattering data $\tilde{\mathcal{D}}=\{\tilde{r}=0,\{ik_n,c_n\delta(ik_n)\}_{ik_n\in \aleph}\}$. And the solution of this kind of RH problem  is existentially unique, so $M_R(k)$ exists uniquely.
												\end{proof}
											\end{Proposition}
										
											\subsubsection{Soliton solutions}
											
Let  $M^{\Lambda}(k)$ denote the RH problem for $M_R(k)$  without jumps,  then  $M^{\Lambda}(k)$ satisfies
											
											\begin{RHP}\label{RHP7}
												Find a matrix valued function  $ M^\Lambda(k)$ with following properties:
												
												$\blacktriangleright$ Analyticity:  $M^{\Lambda}(k)$ is meromorphic in $\mathbb{C}$;
												
												$\blacktriangleright$ $\bar{\partial}$-Derivative: $\bar{\partial}R^{(2)}(k)=0$;
												
												$\blacktriangleright$ Asymptotic behaviors:
												\begin{align}
													M^\Lambda(k) =& I+\mathcal{O}(k^{-1}),\hspace{0.5cm}k \rightarrow \infty; \nonumber
												\end{align}
												
												$\blacktriangleright$ Residue conditions: $M^{\Lambda}$(k) has the same residue conditions with the RHP \ref{RHP6}.
												
											\end{RHP}

											\begin{Proposition}
												For any given corrected scattering data $\tilde{\mathcal{D}}_\Lambda=\{\tilde{r}=0,\{ik_n,\tilde{c}_n\},ik_n\in \Lambda\}$, the solution of RHP \ref{RHP7} exists uniquely and is shown to be constructed:\\
												\textbf{I}: if $\Lambda=\varnothing$, then
												\begin{equation}
													M^\Lambda(k)=I;\label{msol1}
												\end{equation}
												\textbf{II}: if $\Lambda\neq\varnothing$ with $\Lambda=\left\lbrace ik_n\right\rbrace_{n=1}^{\Xi} $, where the symbol $\Xi$ denotes the number of poles in $\Lambda$, then
												\begin{align}
													M^\Lambda(k)&=I+
													\sum_{n=1}^{\Xi}\left(\begin{array}{cc}
														\frac{\gamma_n}{k+ik_n} & \frac{\beta_n}{k-ik_n}\\
														\frac{\zeta_n}{k+ik_n} & \frac{\alpha_n}{k-ik_n}
													\end{array}\right) .\label{msol2}
												\end{align}
											Here  $\alpha_n,\beta_n,\gamma_n,\zeta_n$  are determined by linearly dependant equations:
												\begin{align}
													&c_n^{-1}T^2(ik_n)e^{2it\theta{(ik_n)}}\beta_n=1+\sum_{h=1}^{\Xi}\frac{\gamma_h}{ik_n+ik_h} , \label{cn1}\\	
													&c_n^{-1}T^2(ik_n)e^{2it\theta{(ik_n)}}\alpha_n=\sum_{h=1}^{\Xi}\frac{\zeta_h}{ik_n+ik_h} ,\label{cn2} \\
													&c_n^{-1}T^2(ik_n)e^{2it\theta{(ik_n)}}\gamma_n=\sum_{h=1}^{\Xi}\frac{-\beta_h}{ik_n+ik_h},\label{cn3}\\
													&c_n^{-1}T^2(ik_n)e^{2it\theta{(ik_n)}}\zeta_n=1+\sum_{h=1}^{\Xi}\frac{-\alpha_h}{ik_n+ik_h}.\label{cn4}
												\end{align}
											\end{Proposition}
											\begin{proof}
												The uniqueness of solution follows from the Liouville's theorem. The result of \textbf{I} can be simple obtain.
												
												As for \textbf{II}, the residue and asymptotic conditions ensure that $M^\Lambda(k)$ can have an expansion of the form as mentioned above. $\alpha_n,\beta_n,\gamma_n,\zeta_n$ are obtained from  four linearly dependant equations  (\ref{cn1})-(\ref{cn4}) which come from  substituting  (\ref{msol2}) into (\ref{resMR}).
											\end{proof}
											
											For convenience, denote the asymptotic expansion of $M^{\Lambda}(k)$ as $k\to \frac{i}{2}$:
											\begin{align}
												M^{\Lambda}(k)=M^{\Lambda}(\frac{i}{2})+M^{\Lambda}_1(k-\frac{i}{2})+\mathcal{O}((k-\frac{i}{2})^{2}).\label{asymr}
											\end{align}
											
											\subsubsection{Error estimate between $M_R$ and $M^\Lambda$}
											Here we try to match $M_R(k)$ to the soliton solution $M^{\Lambda}(k)$ and show that the error between the two is a small norm RHP. We define $M^{err}$ as the error between $M_R$ and $M^\Lambda$ with
											\begin{equation}
												M^{err}(k)=M_R(k)\left(M^\Lambda(k)\right)^{-1}, \label{Merr}
											\end{equation}
											 which  satisfies the following   RH problem
											
											\begin{RHP}\label{RHP8}
												Find a matrix-valued function $M^{err}(k)$ with following identities:
												
												$\blacktriangleright$ Analyticity: $M^{err}(k)$  is analytical  in $\mathbb{C}\setminus  \Sigma_{R} $;

												$\blacktriangleright$ Asymptotic behaviors:
												\begin{align}
													&  M^{err}(k) \sim I+\mathcal{O}(k^{-1}),\hspace{0.5cm}|k| \rightarrow \infty;
												\end{align}

												$\blacktriangleright$ Jump condition: $M^{err}(k)$ has continuous boundary values $M^{err}_\pm(k)$ on $\Sigma_R$ satisfying
												$$M^{err}_+(k)=M^{err}_-(k)V^{err}(k),$$
												where the jump matrix $V^{err}(k) $ is given by
												\begin{equation}
													V^{err}(k)=M^{\Lambda}(k)V_R(k)M^{\Lambda}(k)^{-1}. \label{Verr}
												\end{equation}
											\end{RHP}

										\begin{lemma}\label{lemmav2}
											The jump matrix $ V_R(k)$ defined by (\ref{jumpG}) satisfies
											\begin{align}
												&\parallel V_R(k)-I\parallel_{L^\infty(\Sigma_R\setminus\Sigma^{(2)})}=\mathcal{\mathcal{O}}(e^{- 2\rho_0t} ),\label{guji1}
											\end{align}
											where
											\begin{equation}
												\rho_0=\min_{ik_n\in\Delta\setminus \Lambda}\left\lbrace |\text{Im}\theta_n| >\delta_0\right\rbrace.\label{rho0}
											\end{equation}
										\end{lemma}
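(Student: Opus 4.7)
The plan is to exploit the fact that in every one of the four branches of the definition (\ref{jumpG}) of $G(k)=V_R(k)$, the matrix $V_R(k)-I$ is strictly triangular with a single nonzero entry of the form $C_n(k)\,e^{\pm 2it\theta(\pm k_n)}$ supported on the small circle $|k\mp k_n|=\rho$. Thus it suffices to bound the exponential factor by $e^{-2\rho_0 t}$ and to check that the prefactor $C_n(k)$ is uniformly $\mathcal{O}(1)$ on that circle.

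First I would extract the exponential decay. Since $\theta(k)=k[\xi+1/(2\lambda(k))]$ and $\lambda(-k)=\lambda(k)$, we have $\theta(-k)=-\theta(k)$, hence $\mathrm{Im}\,\theta(-k_n)=-\mathrm{Im}\,\theta(k_n)$. By the very definition of $\Delta_1^+$ and $\Delta_1^-$, the signs match up in each branch so that the oscillatory factor always contributes decay:
\begin{equation*}
\bigl|e^{2it\theta(k_n)}\bigr|=e^{-2t\,\mathrm{Im}\,\theta(k_n)},\quad n\in\Delta_1^+;\qquad \bigl|e^{-2it\theta(k_n)}\bigr|=e^{2t\,\mathrm{Im}\,\theta(k_n)},\quad n\in\Delta_1^-,
\end{equation*}
and analogously for the circles around $-k_n$ after using $\theta(-k_n)=-\theta(k_n)$. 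In each case $|\mathrm{Im}\,\theta(\pm k_n)|\geq \rho_0$ by the definition (\ref{rho0}), so every exponential in (\ref{jumpG}) is bounded by $e^{-2\rho_0 t}$.

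Next I would verify that all remaining prefactors are uniformly bounded on the circles, independently of $t$. Choose $\rho>0$ so that the discs $\{|k-k_n|\leq\rho\}$ and $\{|k+k_n|\leq\rho\}$ are pairwise disjoint and disjoint from $\mathbb{R}$. On such a circle: the norming constants $c_n,\bar c_n^{-1}$ are fixed nonzero numbers; the derivatives $(1/T)'(k_n)$ and $T'(-k_n)$ appearing in the $\Delta_1^+$ branches are nonzero and finite by Proposition \ref{proT_1}(a) (simple pole at $k_n$, simple zero at $-k_n$); the functions $T(k)$ and $T^{\pm 2}(k)$ are analytic and bounded on the circle because $\rho$ has been chosen small enough to exclude the other discrete singularities; and the factors $(k\mp k_n)^{-1}$ appearing in the $\Delta_1^-$ branches contribute only a fixed $\rho^{-1}$. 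Combining the two observations gives $\|V_R-I\|_{L^\infty(\Sigma^{(R)})}\leq C\,e^{-2\rho_0 t}$, which is exactly (\ref{guji1}).

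The only (minor) obstacle is bookkeeping: making sure the constant $C$ is uniform in $t$ and in $\xi$ ranging over a fixed sub-interval of $(-\infty,-1/4)\cup(2,+\infty)$. The discrete eigenvalues $k_n$ themselves do not depend on $\xi$, so the geometry of the circles is fixed; the $\xi$-dependence of $T$ enters only through the Cauchy integral $\delta(k,\xi)$ over $L(\xi)$, which is controlled by the $H^{1,1}$-regularity of $r$ supplied by Proposition \ref{pror}. There is no analytic difficulty beyond this; the lemma is essentially a triangular-matrix estimate combined with the spectral dichotomy encoded in the definitions of $\Delta_1^\pm$.
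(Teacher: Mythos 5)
Your proposal is correct and follows essentially the same route as the paper's proof: bound the single nonzero off-diagonal entry of $V_R-I$ on each small circle by a $t$-independent prefactor times the exponential $e^{\pm 2it\theta(\pm k_n)}$, whose modulus is at most $e^{-2\rho_0 t}$ by the sign dichotomy built into $\Delta_1^\pm$. You merely spell out the oddness $\theta(-k)=-\theta(k)$ and the uniform boundedness of $c_n$, $T^{\pm2}$, $[(1/T)'(k_n)]^{-1}$, $[T'(-k_n)]^{-1}$ and $(k\mp k_n)^{-1}$ on the circles, which the paper leaves implicit by treating one case and declaring the rest similar.
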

										
										\begin{proof}
											Take $k\in\{k:|k-ik_n|=\rho,ik_n\in\Delta^+\}$ as an example.
											\begin{align}
												\parallel V_R(k)-I\parallel_{L^\infty}&=|c_n^{-1}T(k)e^{2it\theta(ik_n)}[(\frac{1}{T})^{'}(ik_n)]^{-1}|\nonumber\\
												&\lesssim e^{-\text{Re}(2it\theta_n)}\lesssim e^{2t\text{Im}(\theta_n)}\leq e^{-2\rho_0t}.\nonumber
											\end{align}
											The proofs for the rest of the cases are similar.
										\end{proof}
										\begin{corollary}\label{v2p}
											For $1\leq p\leq +\infty$, the jump matrix $V_R(k)$ satisfies
											\begin{align}
												&	\parallel V_R(k)-I\parallel_{L^p(\Sigma_R\setminus\Sigma^{(2)})}\leq C_pe^{- 2\rho_0t} ,\\
												&	\parallel V_R(k)-I\parallel_{L^p(\Sigma^{(2)})}\leq C_pt^{-\frac{1}{p}},
											\end{align}
											where  $C_p\geq 0$  depends  on  $p$.
										\end{corollary}
											
											\begin{lemma}\label{lemmav2}
												The jump matrix $ V^{err}(k)$ satisfies
												\begin{align}
													&\parallel V^{err}(k)-I\parallel_{L^\infty(\Sigma_R\setminus\Sigma^{(2)})}=\mathcal{\mathcal{O}}(e^{- 2\rho_0t} ),\label{guji2}
												\end{align}
											\end{lemma}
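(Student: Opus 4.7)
The approach is the standard one for estimating error jump matrices in the $\bar\partial$-steepest descent framework: factor the deviation $V^{err} - I$ through the already-controlled quantity $V_R - I$, with the conjugating factors coming from the explicit soliton model.

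First I would rewrite the identity $V^{err}(k) = M^{\Lambda}(k) V_R(k) M^{\Lambda}(k)^{-1}$ in the form
\begin{equation*}
V^{err}(k) - I = M^{\Lambda}(k)\bigl(V_R(k) - I\bigr)M^{\Lambda}(k)^{-1},
\end{equation*}
which is valid on $\Sigma^{(R)}$ since $M^{\Lambda}(k)$ has no jump. Taking the operator norm and using submultiplicativity gives
\begin{equation*}
\bigl\|V^{err}(k) - I\bigr\|_{L^\infty(\Sigma^{(R)})} \;\le\; \bigl\|M^{\Lambda}\bigr\|_{L^\infty(\Sigma^{(R)})} \,\bigl\|V_R - I\bigr\|_{L^\infty(\Sigma^{(R)})} \,\bigl\|(M^{\Lambda})^{-1}\bigr\|_{L^\infty(\Sigma^{(R)})}.
\end{equation*}

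Next I would use the earlier Lemma establishing $\|V_R - I\|_{L^\infty} = \mathcal{O}(e^{-2\rho_0 t})$. What remains is to show that the two factors involving $M^{\Lambda}$ are uniformly bounded in $t$ (and in $y$) on the contour $\Sigma^{(R)}$. By construction, $\Sigma^{(R)}$ consists of small circles of fixed radius $\rho$ centered at the discrete spectral points in $\Delta_1^+ \cup \Delta_1^-$, while the poles of $M^{\Lambda}$ lie at the points of $\Lambda \subset \{1,\dots,N\}\setminus \Delta_1$; these two sets are disjoint, so $\Sigma^{(R)}$ is at positive distance from every singularity of $M^{\Lambda}$. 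From the explicit formula for $M^{\Lambda}(k)$ in the preceding Proposition, the matrix entries are rational functions of $k$ whose coefficients $\alpha_n,\beta_n,\gamma_n,\zeta_n$ solve a linear system with coefficient matrix depending on $T(k_n)$ and the bounded phases $e^{2it\theta(k_n)}$ for $n\in\Lambda$ (where $|\mathrm{Im}\,\theta(k_n)|<\delta$ by the definition of $\Lambda$). Since the reflectionless soliton RHP is uniquely solvable and its solution is uniformly bounded on compacta disjoint from the poles, one obtains constants $C,C'>0$ independent of $t$ with $\|M^{\Lambda}\|_{L^\infty(\Sigma^{(R)})}\le C$ and $\|(M^{\Lambda})^{-1}\|_{L^\infty(\Sigma^{(R)})}\le C'$; for the latter one uses $\det M^{\Lambda}\equiv 1$, which follows from the normalization $M^{\Lambda}(k)\to I$ as $k\to\infty$ together with analyticity and absence of jumps.

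Combining these three bounds yields
\begin{equation*}
\bigl\|V^{err}(k) - I\bigr\|_{L^\infty(\Sigma^{(R)})} \;\le\; CC'\bigl\|V_R - I\bigr\|_{L^\infty(\Sigma^{(R)})} \;=\; \mathcal{O}(e^{-2\rho_0 t}),
\end{equation*}
which is the claimed estimate (\ref{guji2}). The main obstacle, and where I would spend the most care, is the uniform-in-$t$ bound on $(M^{\Lambda})^{-1}$: one must rule out that the linear system determining the residue coefficients $\alpha_n,\beta_n,\gamma_n,\zeta_n$ becomes degenerate or produces cancellations that shrink $\det M^{\Lambda}$ at a point on $\Sigma^{(R)}$. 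The unit-determinant argument together with the compactness of $\Sigma^{(R)}$ and continuity of $M^{\Lambda}$ in the parameters $(y,t)$ removes this worry; the remaining dependence on the phases $e^{2it\theta(k_n)}$ for $n\in\Lambda$ only produces a rotation in an appropriate sense and does not destabilize the bound.
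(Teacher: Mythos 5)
Your proposal is correct and follows essentially the same route as the paper: conjugating $V_R-I$ by $M^{\Lambda}$, invoking the earlier estimate $\|V_R-I\|_{L^\infty}=\mathcal{O}(e^{-2\rho_0 t})$, and absorbing the uniformly bounded factors $M^{\Lambda}$, $(M^{\Lambda})^{-1}$ into a constant. The paper's proof is just a terser version of this (it writes the bound as $\le c|V_R-I|$ without spelling out the determinant and pole-separation arguments you supply), so your added care about the uniform boundedness of $(M^{\Lambda})^{\pm 1}$ on $\Sigma^{(R)}$ is a welcome elaboration rather than a departure.
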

											
											\begin{proof}
												Take $k\in\{k:|k-ik_n|=\rho,ik_n\in\Delta^+\}$ as an example.
												\begin{align}
													|V_R(k)-I|=|M^{\Lambda}(k)(V_R-I)M^{\Lambda}(k)^{-1}|\leq c|V_R-I|\leq e^{-2\rho_0t}.\nonumber
												\end{align}
												The proofs for the rest of the cases are similar.
											\end{proof}
											
											\begin{corollary}\label{verrp}
												For $1\leq p\leq +\infty$, the jump matrix $V^{err}(k)$ satisfies
												\begin{align}
												&	\parallel V^{err}(k)-I\parallel_{L^p(\Sigma_R\setminus\Sigma^{(2)})}\leq C_pe^{- 2\rho_0t},\\
													&	\parallel V^{err}(k)-I\parallel_{L^p(\Sigma^{(2)})}\leq C_pt^{- \frac{1}{p}}.
												\end{align}
											
											\end{corollary}
					
										 Denote
											\begin{align}
							  C_{\omega_e}f=C_-(f(V^{err}-I)), \label{nlscauchy65}
											\end{align}
											where $C_{-}$ is the Cauchy projection operator, defined as follows
											\begin{equation}\label{eq:5.23}
												C_{-}f(k)=\operatorname*{lim}\limits_{k'\to k\in\Sigma_R}  \frac{1}{2\pi i}\int_{\Sigma^{(2)}}\frac{f(s)}{s-k'}ds,
											\end{equation}
											and $\|C^{-}\|_{L^{2}}$ is bounded. By  Beals-Coifman's theorem, the solution of the above RH problem can be expressed as
											\begin{align}
												&M^{err}(k)=I+\frac{1}{2\pi i}\int_{\Sigma_R}  \frac{\mu_{e} (s)(V^{err}(s)-I)}{s-k}ds,\label{cauchy68}
											\end{align}
											where $\mu_{e} (k)\in L^{2}(\Sigma_R)$ satisfies
											\begin{align}
												& (1-C_{\omega_e})\mu_{e}(k)=I. \label{fcauchy66}
											\end{align}
											Using (\ref{guji2})-(\ref{nlscauchy65}), we can see that
											\begin{equation}\label{eq:5.24}
												||C_{\omega_e}||_{L^{2}(\Sigma_R)}\leq
												||C_{-}||_{L^{2}(\Sigma_R)}||V^{err}(k)-I||_{L^{\infty}(\Sigma_R) }\leq  c t^{-\frac{1}{2}},
											\end{equation}
									which implies that the operator $(1-C_{\omega_e})^{-1}$ exists  for large  $t$, so    $\mu_e$ and  $M^{err}(k)$   exist uniquely.
					Next  we make  some estimates and asymptotic behaviors of $M^{err}(k)$.
											
											\begin{Proposition}
												For $M^{err}(k)$ defined in (\ref{Merr}), it stratifies
												\begin{align}
													&|M^{err}(k)-I|\leq ct^{-\frac{1}{2}}.
												\end{align}
												When $k=\frac{i}{2}$,
												\begin{equation}
													M^{err}(\frac{i}{2})=I+\frac{1}{2\pi i}\int_{\Sigma_R}\dfrac{\mu_e(s)(V^{err}(s)-I)}{s-\frac{i}{2}}ds,\label{Merri}
												\end{equation}
												As $k\to \frac{i}{2}$, the Laurent expansion of $M^{err}(k)$ is
												\begin{align}
													M^{err}(k)=	M^{err}(\frac{i}{2})+M_1^{err}(k-\frac{i}{2})+\mathcal{O}((k-\frac{i}{2})^{2}),\label{expMerr}
												\end{align}
												where
												\begin{equation}
													M_1^{err}=\frac{1}{2\pi i}\int_{\Sigma_R}\frac{\mu_e(s)(V^{err}(s)-I)}{(s-\frac{i}{2})^2}ds.\label{Merr1}
												\end{equation}
												In addition, $M^{err}(\frac{i}{2})$ and  $M_1^{err}$ satisfy following  asymptotics
												\begin{equation}
													|M^{err}(\frac{i}{2})-I|\lesssim\mathcal{O}(t^{-\frac{1}{2}}) ,\hspace{0.5cm} M_1^{err}\lesssim\mathcal{O}(t^{-\frac{1}{2}}).\label{asyerrMerr}
												\end{equation}
												\begin{proof}
													By (\ref{cauchy68}),
													\begin{align}
														&M^{err}(k)-I= \frac{1}{2\pi i}\int_{\Sigma_R}  \frac{  V^{err}(s)-I }{s-k}ds + \frac{1}{2\pi i}\int_{\Sigma_R}  \frac{ (\mu_{e} (s)-I) (V^{err}(s)-I)}{s-k}ds ,\nonumber
													\end{align}
													from which the following estimate is obtained
													\begin{align}
														| M^{err}(k)-I  | & \leq  \|  V^{err}(s)-I\|_{L^2(\Sigma_R)}\| \frac{ 1 }{s-k}  \|_{L^2(\Sigma_R)}+ \nonumber\\
														& \ \ \ \  \|  V^{err}(s)-I\|_{L^\infty(\Sigma_R)} \|  \mu_{e} (s)-I\|_{L^2(\Sigma_R)}  \| \frac{ 1 }{s-k}  \|_{L^2(\Sigma_R)}\nonumber \\
														& \leq  c t^{-\frac{1}{2}}. \nonumber
													\end{align}
													(\ref{expMerr})-(\ref{Merr1}) can be obtained by  Taylor expansion. At last, since $ (s-\frac{i}{2})^{-2} $ is bounded in $\Sigma_R$, the proof of (\ref{asyerrMerr}) is given as follows:
													\begin{align}
														| M_1^{err} | \lesssim || V^{err}-I ||_{L^2}||\mu_{e} (s)-I||_{L^2}+||V^{err}-I||_{L^1} \lesssim \mathcal{O}(t^{-\frac{1}{2}}). \nonumber
													\end{align}
												\end{proof}
												
											\end{Proposition}

											\subsection{Analysis on a pure $\bar{\partial}$ problem}\label{subsec3.3}

											In this subsection, we mainly consider the following pure $\bar{\partial}$-problem which is defined by (\ref{M2})
											
											\begin{RHP}\label{RHP9}
												\noindent (Pure $\bar{\partial}$-problem) Find   $  M^{(3)}(k)$ with following identities:
												
												$\blacktriangleright$ Analyticity: $M^{(3)}(k)$ is continuous   and has sectionally continuous first partial derivatives in $\mathbb{C}$.

												$\blacktriangleright$ Asymptotic behavior:
												\begin{align}
													&M^{(3)}(k) \sim I+\mathcal{O}(k^{-1}),\hspace{0.5cm}k \rightarrow \infty;\label{asymbehv7}
												\end{align}
												
												$\blacktriangleright$ $\bar{\partial}$-Derivative: We have
												$$\bar{\partial}M^{(3)}(k)=M^{(3)}(k)W^{(3)}(k),\ \ k\in \mathbb{C},$$
												where
												\begin{equation}
													W^{(3)}(k)=M^{(R)}(k)\bar{\partial}R^{(2)}(k)M^{(R)}(k)^{-1}.
												\end{equation}
											\end{RHP}
											
											The solution of the pure $\bar\partial$ problem $ M^{(3)}(k)$ is given by the following integral equation
											\begin{equation}\label{feq:61}
												M^{(3)}(k)=
												I-\frac{1}{\pi}\iint_{\mathbb{C}}\frac{M^{(3)}W^{(3)}}{s-k}\mathrm{d}A(s),
											\end{equation}
											where $\mathrm{d}A(s)$ is the Lebesgue measure in the complex plane. The equation (\ref{feq:61}) can also be expressed as an operator equation
											\begin{equation}
												(I-J) M^{(3)}(k) =I, \label{opers}
											\end{equation}
											where $J$ is the Cauchy operator
											\begin{equation}
												Jf(k)= -\frac{1}{\pi}\iint_{\mathbb{C}}\frac{f(s)W^{(3)}(s)}{s-k}\mathrm{d}A(s).\label{fe3st3}
											\end{equation}
											
											\begin{lemma}\label{lemmatheta}
												
												For $\xi \in (-\infty,-\frac{1}{4})$, $ {\rm Im }\theta(k)$ has following estimation:
												\begin{align}
													&\text{Im }\theta(k)\leq |k||sin\omega|\left(\xi+\frac{1}{1+cos2\varphi+\sqrt{2(1+cos2\varphi)}}\right) ,\hspace{0.5cm} \text{as }k\in\Omega_1, \Omega_2, \\
													&\text{Im }\theta(k)\geq -|k||sin\omega|\left(\xi+\frac{1}{1+cos2\varphi+\sqrt{2(1+cos2\varphi)}}\right) ,\hspace{0.3cm} \text{as }k\in\Omega_3, \Omega_4.
												\end{align}
												For $\xi \in (2,+\infty)$, the estimation is shown as follow:
												\begin{align}
													&\text{Im }\theta(k)\geq |k||sin\omega|\left(\xi+\frac{1}{1+cos2\varphi-\sqrt{2(1+cos2\varphi)}}\right) ,\hspace{0.5cm} \text{as }k\in\Omega_1, \Omega_2, \\
													&\text{Im }\theta(k)\leq -|k||sin\omega|\left(\xi+\frac{1}{1+cos2\varphi-\sqrt{2(1+cos2\varphi)}}\right) ,\hspace{0.3cm} \text{as }k\in\Omega_3, \Omega_4.
												\end{align}
												\begin{proof}
													We just take $k \in \Omega_1$ as an example, and the other regions are similarly.
													
													Since
													$\theta(k)=k\xi-\frac{k}{2k^2+\frac{1}{2}}$, 
													let $k=|k|e^{i\omega}=u+iv$, then
													\begin{equation}
														{\rm Im} \theta(k)=ksin\omega \left(\xi+\frac{2|k|^2-\frac{1}{2}}{(2|k|^2cos2\omega+\frac{1}{2})^2+4|k|^2 sin^22\omega}\right). \nonumber
													\end{equation}
													Denote
													\begin{align}
														h(x;a)=\frac{x}{(x+\frac{1}{2})^2+a(x+\frac{1}{2})+\frac{1}{4}},\nonumber
													\end{align}
													where $x=2|k|^2-\frac{1}{2}$, $a=cos2\omega$, and then
													\begin{align}
												h(x,a)\in\left(-\infty,1+a-\sqrt{2(1+a)}\right)\bigcup\left(1+a+\sqrt{2(1+a)},+\infty\right).\nonumber
												\end{align}
												From ${\rm Im} \theta=v\left(\xi+h(x;a)\right)$ with $x \in (-\frac{1}{2},+\infty)$, $a \in [cos2\varphi,1]$, the conclusion of the lemma is obtained.
												\end{proof}
											\end{lemma}
											
											\begin{corollary}\label{Imtheta}
												There exist constants $c(\xi)<0$, $\tilde{c}(\xi)>0$ relative to $\xi$ that the imaginary part of phase function (\ref{theta}) $\text{Im }\theta(k)$ have following evaluation for $k=|k|e^{i\omega}=u+i v $:\\
												When $\xi\in(-\infty,-1/4)$,
												\begin{align}
													& {\rm Im }\theta(k)\leq c(\xi)v ,\hspace{0.5cm} \text{as }k\in\Omega_1, \Omega_2,\\ \label{3}
													& {\rm Im }\theta(k)\geq -c(\xi)v ,\hspace{0.5cm} \text{as }k\in\Omega_3, \Omega_4.	
												\end{align}
												When $\xi\in(2,+\infty)$,
												\begin{align}
													& {\rm Im }\theta(k)\geq \tilde{c}(\xi)v ,\hspace{0.5cm} \text{as }k\in\Omega_1, \Omega_2,\\
													& {\rm Im }\theta(k)\leq -\tilde{c}(\xi)v ,\hspace{0.5cm} \text{as }k\in\Omega_3, \Omega_4.
												\end{align}	
											\end{corollary}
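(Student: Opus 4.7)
The statement is essentially a packaging of Lemma \ref{lemmatheta}: the lemma already furnishes explicit, $\xi$- and $\varphi$-dependent estimates of $\mathrm{Im}\,\theta(k)$ on each sector $\Upsilon_j$, so the plan is to (a) name the coefficient appearing there, (b) translate $|k||\sin\omega|$ into a linear function of $v=\mathrm{Im}\,k$ on each sector, and (c) verify that the named coefficient has the advertised sign under the admissibility conditions (ii) and (iii) imposed earlier on the opening angle $\varphi$.

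I would begin by defining
$$ c(\xi) := \xi + \frac{1}{1+\cos 2\varphi + \sqrt{2(1+\cos 2\varphi)}}, \qquad \tilde c(\xi) := \xi + \frac{1}{1+\cos 2\varphi - \sqrt{2(1+\cos 2\varphi)}}, $$
so that Lemma \ref{lemmatheta} reads $\mathrm{Im}\,\theta(k)\le |k||\sin\omega|\, c(\xi)$ on $\Upsilon_1\cup\Upsilon_2$ and $\mathrm{Im}\,\theta(k)\ge -|k||\sin\omega|\, c(\xi)$ on $\Upsilon_3\cup\Upsilon_4$ in the regime $\xi<-1/4$ (and analogously with $\tilde c(\xi)$ when $\xi>2$). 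On the upper half-plane sectors $\Upsilon_1,\Upsilon_2$ one has $\arg k\in[0,\pi]$, so $|k||\sin\omega|=v$; on the lower half-plane sectors $\Upsilon_3,\Upsilon_4$ one has $\arg k\in[\pi,2\pi]$, so $|k||\sin\omega|=-v$. Inserting these identities converts the four lemma inequalities into the four inequalities claimed by the corollary, so the structural part is immediate once the signs of the constants are in hand.

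The only non-cosmetic step is verifying the sign claims $c(\xi)<0$ for $\xi<-1/4$ and $\tilde c(\xi)>0$ for $\xi>2$. Here I would set $y:=\sqrt{2(1+\cos 2\varphi)}\ge 0$, so that $1+\cos 2\varphi=y^2/2$ and the prefactor in $c(\xi)$ equals $2/(y^2+2y)$. The condition $c(\xi)<0$ then reduces to the quadratic inequality $y^2+2y+2/\xi>0$, whose unique positive solution is $y>-1+\sqrt{1-2/\xi}$, which is precisely the admissibility condition (ii). An identical computation for $\tilde c(\xi)>0$ reduces, after a small case analysis at the smaller root of the analogous quadratic, to the two-sided bound $1-\sqrt{1-2/\xi}<y<1+\sqrt{1-2/\xi}$, which is exactly condition (iii). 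Thus the admissibility constraints previously imposed on $\varphi$ were chosen precisely so that the constants $c(\xi), \tilde c(\xi)$ have the desired signs.

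There is no serious obstacle: the argument is a change of variables plus a one-line quadratic inequality. The only technical care needed is to keep track of the sign of $v$ on each sector so that the direction of the inequality is preserved when the nonnegative quantity $|k||\sin\omega|$ is replaced by $\pm v$; in particular on $\Upsilon_3\cup\Upsilon_4$ one should verify that with $v\le 0$ and $c(\xi)<0$ the bound really does certify $\mathrm{Im}\,\theta\ge 0$, which is what is used subsequently to obtain exponential decay of the oscillatory factor $e^{2it\theta}$ in the $\bar\partial$-estimates.
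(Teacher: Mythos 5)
Your proposal is correct and is exactly how the paper intends the corollary to be read: it is an immediate repackaging of Lemma \ref{lemmatheta}, with $|k||\sin\omega|=\pm v$ on the upper/lower sectors and the sign claims for $c(\xi)$, $\tilde c(\xi)$ reducing, via the substitution $y=\sqrt{2(1+\cos2\varphi)}$, precisely to the admissibility conditions (ii) and (iii) imposed on $\varphi$. Your careful tracking of the sign of $v$ on $\Upsilon_3\cup\Upsilon_4$ in fact produces the bound $\mathrm{Im}\,\theta\ge -c(\xi)|v|\ge 0$ that is what is actually used in the subsequent $\bar\partial$-estimates, which is the intended reading of the (slightly mistyped) second inequality in the corollary's statement.
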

										
											Next we prove that the above operator $J$ is small parametric when $t$ is sufficiently large.
											\begin{Proposition}\label{Joperator}
												To the case  $\xi\in(-\infty,-1/4)\cup(2,+\infty)$, for sufficiently large $t$,  we have
												\begin{equation}
													|| J ||_{ L^{\infty}\rightarrow L^{\infty} }  \leqslant ct^{-\frac{1}{2}}.\label{fest914}
												\end{equation}
												Therefore $(1-J)^{-1}$ exists, and thus the operator equation (\ref{opers}) has a unique solution.
												\begin{proof}
													Here, we just take the case $\xi\in(-\infty,-1/4)$ as an example. In order to get the estimate (\ref{fest914}), it is only need to prove that for $f\in L^{\infty}(\mathbb{C}) $,
													\begin{equation}
														||Jf ||_{ L^{\infty} (\mathbb{C}) }  \leqslant ct^{-1/2 }|| f ||_{ L^{\infty}(\mathbb{C})}.\nonumber
													\end{equation}
													Here, we just prove the case $k\in \Omega_1$. By (\ref{fe3st3}),
													\begin{equation}
														|Jf(k)|\leq c || f ||_{ L^{\infty}}  \iint_{\Omega_1}\frac {|W^{(3)}(s)| }{|s-k|  }dA(s),\label{fest313}
													\end{equation}
													where
													\begin{align}
														|W^{(3)}(s)|  &\leq|M^{(R)}||\bar{\partial}R^{(2)}||(M^{(R)})^{-1}| \leq |M^{(R)}||\bar{\partial}R^{(2)}||\sigma_2(M^{(R)})^{T}\sigma_2| \nonumber \\
														& \leq |M^{(R)}|^{2}|\bar{\partial}R^{(2)}| \leq c|\bar{\partial}R_1(k)||e^{-2it\theta}|,
													\end{align}
													thus
													\begin{align}
														\iint_{\Omega_1}\frac {|W^{(3)}(s)| }{|s-k|  }dA(s) \lesssim \iint_{\Omega_1}\frac {|\bar{\partial}R_1|e^{2tIm\theta}}{|s-k|  }dA(s)
													\end{align}
													
													Referring  into (\ref{dbarRj}) in Proposition \ref{proR}, the integral $\iint_{\Omega_1}\frac {|\bar{\partial}R_1|e^{2tIm\theta}}{|s-k|  }dA(s)$	can be divided to two part:
													\begin{align}
														\iint_{\Omega_1}\dfrac{|\bar{\partial}R_1 (s)|e^{2t\text{Im}\theta}}{|s-k|}dA(s)\lesssim I_1+I_2,	\label{chai}
													\end{align}
													with
													\begin{align}
														I_1=\iint_{\Omega_1}\dfrac{|r'(|s|)|e^{2t\text{Im}\theta}}{|s-k|}dA(s),\hspace{0.5cm}
														I_2=\iint_{\Omega_1}\dfrac{|s|^{-1/2}e^{2t\text{Im}\theta}}{|s-k|}dA(s).	
													\end{align}
													Denoting $s=u+iv$, $k=k_R+ik_I$, (\ref{3}) gives that
													\begin{align}
														&\text{Im }\theta(s)\leq c(\xi)2tv\leq c'tv.
													\end{align}
													Therefore,
													\begin{align}
														&I_{1}= \int_0^\infty e^{c'tv}  dv  \int_v^\infty  \frac { |r'(|s|)| }{|s-k| }du
														\leq    \int_0^\infty  e^{c'tv } \| r'(|s|) \|_{L^2(v,\infty)} \bigg\| \frac{ 1 } {s-k}\bigg\|_{L^2(v,\infty)} dv. \label{est623}
													\end{align}
												Note that
													\begin{align}
														&   \bigg\| \frac{ 1} {s-k}\bigg\|^2_{L^2(v,\infty)}=  \int_{v}^\infty
														\frac{1}{|s-k|^2}du   \leq \int_{-\infty}^\infty    \frac{1}{ (u-k_R)^2+(v-k_I)^2     }    du  \nonumber\\
														&=   \frac{1}{| v-k_I|}   \int_{-\infty}^\infty    \frac{1}{ 1+   y^2     }    d  y
														=    \frac{\pi} {|v- k_I|} ,  \label{ineq3}
													\end{align}
													where $ y=\frac{u-k_R}{v-k_I}$. And
													\begin{align}
														&  \| r'(|s|) \|_{L^2(v,\infty)}^2 =  \int_{v}^\infty
														\big | r'( \sqrt{u^2+v^2})\big|^2  du  \leq  \int_{\sqrt{2}v}^\infty
														\big | r'(\tau)\big|^2  \frac{\sqrt{u^2+v^2}}{u} d\tau   \nonumber\\
														&\leq  \sqrt{2}  \int_{\sqrt{2}v}^\infty
														\big | r'(\tau)\big|^2   d\tau
														\leq  \| r'(s) \|_{L^2(\mathbb{R})}^2. \label{ineq33}
													\end{align}
												(\ref{ineq3})-(\ref{ineq33}) implies that
													\begin{align}
														&  I_{1} \leq c \|r'\|_{L^2(\mathbb{R})} \int_0^\infty e^{c'tv} |v-k_I|^{-1/2}  dv  \nonumber \\
														&=c \|r'\|_{L^2(\mathbb{R})} \left[   \int_0^{k_I}   \frac{ e^{c'tv }} {\sqrt{k_I-v}}  dv+ \int_{k_I}^\infty   \frac{ e^{c'tv }} {\sqrt{v-k_I}}  dv\right] \nonumber \\
														&\lesssim t^{-\frac{1}{2}}.\label{fh33gf1}
													\end{align}
													To estimate $I_{2}$, we consider the following two $L^p$-estimates $(p>2)$.
													\begin{align}
														&  \||s|^{-1/2} \|_{L^p(v,\infty)}  = \left(\int_{v}^\infty
														\frac{1}{|u+iv |^{p/2}}du\right)^{1/p} =  \left(\int_{v}^\infty
														\frac{1}{(u^2+v^2)^{p/4}}du\right)^{1/p}  \nonumber\\
														&= v^{1/p-1/2} \left(\int_1^\infty
														\frac{1}{(1+x^2)^{p/4}}dx\right)^{1/p} \leq cv^{1/p-1/2};\\
														& \big\| |s-k|^{-1}  \big\|_{L^q(v,\infty)}  \leq c |v-k_I|^{1/q-1 }, \ \ 1/p+1/q=1. \ \label{Lpest}
													\end{align}
													Using the two estimates above, we arrive at
													\begin{align}
														I_{2}&\leq \int_0^\infty e^{c'tv}  dv  \int_v^\infty  \frac {  |s|^{-1/2}  }{ |s-k| }du \nonumber\\
														&\leq    \int_0^\infty  e^{c'tv } \||s|^{-1/2} \|_{L^p(v,\infty)} \big\| |s-k|^{-1}  \big\|_{L^q(v,\infty)} dv.\nonumber\\
														&\leq c \left[    \int_0^{k_I} e^{c'tv } v^{1/p-1/2} |v-k_I|^{1/q-1} dv  +\int_{k_I}^\infty e^{c'tv } v^{1/p-1/2} |v-k_I|^{1/q-1}dv \right].\label{piepe}
													\end{align}
													Calculating the two integrals yields
													\begin{align}
														I_{2}\leq ct^{-\frac{1}{2}}.\label{jej}
													\end{align}
													By (\ref{fest313})-(\ref{chai}), the result of Proposition \ref{Joperator} is proved.
												\end{proof}
											\end{Proposition}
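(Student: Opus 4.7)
The plan is to prove the $L^\infty \to L^\infty$ operator bound by estimating $|Jf(k)|$ uniformly in $k$ and thereby reducing the task to a pointwise area integral of the form $\iint |\bar\partial R^{(2)}(s)|\,|e^{\pm 2it\theta(s)}|\,|s-k|^{-1}\,dA(s)$. First I would invoke the fact that $M^{(R)}$ is bounded on $\mathbb{C}$ (its construction in Proposition~6 shows that it is a rational matrix with uniformly controlled poles away from the jump contours), so $|W^{(3)}| \lesssim |\bar\partial R^{(2)}|\,|e^{\pm 2it\theta}|$ up to a constant. By the symmetry of the four sectors $\Upsilon_j$, it suffices to prove the bound with the integral restricted to $\Upsilon_1$, and by Proposition~\ref{proR} the relevant density splits into two contributions, $|r'(|s|)|$ and $|s|^{-1/2}$, giving rise to two integrals $I_1$ and $I_2$.

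Next I would insert the phase estimate from Corollary~\ref{Imtheta}: writing $s = u+iv$ and $k=k_R+ik_I$, one has $2t\,\text{Im}\,\theta(s) \le c'tv$ with $c'<0$ for $s\in\Upsilon_1$. The exponential decay in $v$ is the only source of smallness, so the strategy is to integrate in $u$ first (at fixed $v$), then integrate the resulting one-dimensional expression against $e^{c'tv}$ in $v$. For $I_1$ I would apply Cauchy--Schwarz in $u$, using $\|r'\|_{L^2(\mathbb{R})} < \infty$ (guaranteed by $r \in H^{1,1}$ from Proposition~\ref{pror}) together with the elementary bound $\| (s-k)^{-1}\|_{L^2(v,\infty)} \lesssim |v-k_I|^{-1/2}$, leaving an integral of $e^{c'tv}|v-k_I|^{-1/2}$ over $v>0$ which, split at $v = k_I$, is easily shown to be $\mathcal{O}(t^{-1/2})$ by substituting $w = t|v-k_I|$.

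The main obstacle lies in $I_2$, where the density $|s|^{-1/2}$ fails to be $L^2$ at the origin, so the Cauchy--Schwarz trick above breaks down. I would resolve this by a H\"older argument with exponents $p > 2$ and $q = p/(p-1)$: a direct calculation gives $\| |s|^{-1/2}\|_{L^p(v,\infty)} \lesssim v^{1/p - 1/2}$ and $\| (s-k)^{-1}\|_{L^q(v,\infty)} \lesssim |v-k_I|^{1/q - 1}$. One then has to check that the remaining single integral
\begin{equation*}
\int_0^\infty e^{c'tv}\,v^{1/p - 1/2}\,|v-k_I|^{1/q - 1}\,dv
\end{equation*}
is $\mathcal{O}(t^{-1/2})$ uniformly in $k_I$; this is the crucial technical step and is handled by splitting at $v = k_I$ and rescaling $v \mapsto v/t$ in each piece, using that the exponents satisfy $1/p - 1/2 > -1/2$ and $1/q - 1 > -1$ so the resulting $\Gamma$-type integrals converge. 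For the case $\xi > 2$ I would repeat the argument verbatim, replacing Corollary~\ref{Imtheta}'s inequality by its counterpart and using the corresponding $\bar\partial R_j$ bounds from Proposition~\ref{proR}. Once $\|J\|_{L^\infty \to L^\infty} \le ct^{-1/2}$ is established, Neumann series gives the invertibility of $1-J$ for $t$ large, and hence existence of $M^{(3)}$ via~\eqref{opers}.
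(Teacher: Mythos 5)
Your proposal is correct and follows essentially the same route as the paper's proof: bounding $|W^{(3)}|$ by $|\bar\partial R^{(2)}|\,|e^{\pm 2it\theta}|$ via the boundedness of $M^{(R)}$, splitting the area integral over $\Upsilon_1$ into the $|r'(|s|)|$ and $|s|^{-1/2}$ pieces, and handling the first by Cauchy--Schwarz with $\|(s-k)^{-1}\|_{L^2(v,\infty)}\lesssim |v-k_I|^{-1/2}$ and the second by H\"older with exponents $p>2$ and $q=p/(p-1)$, each yielding $\mathcal{O}(t^{-1/2})$ after integrating $e^{c'tv}$ in $v$. No substantive differences from the paper's argument.
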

											
											\begin{Proposition}\label{asyM3i}
												There exist a  positive constant $\frac{1}{4}<\rho<\frac{1}{2}  $ such that the solution $M^{(3)}(k)$  of  $\bar{\partial}$-problem  admits the following estimation
												\begin{align}
													\parallel M^{(3)}(\frac{i}{2})-I\parallel=\parallel\frac{1}{\pi}\iint_\mathbb{C}\dfrac{M^{(3)}(s)W^{(3)} (s)}{s-\frac{i}{2}}dA(s)\parallel\lesssim t^{-\frac{3}{2}+2\rho}.\label{m3i}
												\end{align}
												As $k\to \frac{i}{2}$, $M^{(3)}(k)$ has asymptotic expansion
												\begin{equation}
													M^{(3)}(k)=M^{(3)}(\frac{i}{2})+M^{(3)}_1(x,t)(k-\frac{i}{2})+\mathcal{O}((k-\frac{i}{2})^{2}),
												\end{equation}
												where $M^{(3)}_1(x,t) $ is a $k$-independent coefficient with
												\begin{equation}
													M^{(3)}_1(x,t)=\frac{1}{\pi}\iint_C\dfrac{M^{(3)}(s)W^{(3)} (s)}{(s-\frac{i}{2})^2}dA(s),
												\end{equation}
												and $M^{(3)}_1(x,t)$  satisfies
												\begin{equation}
													|M^{(3)}_1(x,t)|\lesssim t^{-\frac{3}{2}+2\rho}.\label{M31}
												\end{equation}
											\end{Proposition}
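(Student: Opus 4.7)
My plan is to estimate $M^{(3)}(i/2)-I$ directly from the integral representation (\ref{feq:61}) by exploiting two observations. First, Proposition~\ref{Joperator} supplies $\|M^{(3)}\|_{L^\infty(\mathbb{C})}\le C$ uniformly in $t$, via the Neumann series for $(I-J)^{-1}$. Second, the evaluation point $k=i/2$ sits at a strictly positive distance $c_\varphi>0$ from the support of $W^{(3)}$: since $W^{(3)}$ vanishes outside $\cup_{j=1}^{4}\Upsilon_j$ and each cone $\Upsilon_j$, for small opening angle $\varphi$, stays close to the real axis while $i/2$ sits on the imaginary axis, one has $|s-i/2|\ge c_\varphi>0$ on the support. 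Setting $k=i/2$ in (\ref{feq:61}) and using $W^{(3)}=M^{(R)}\,\bar\partial R^{(2)}(M^{(R)})^{-1}$ together with boundedness of $M^{(R)}$, the task reduces to controlling
\[
\sum_{j=1}^{4}\iint_{\Upsilon_j}\frac{|\bar\partial R_j(s)|\,e^{\pm 2t\,\text{Im}\,\theta(s)}}{|s-i/2|}\,dA(s).
\]

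Next I would invoke Corollary~\ref{Imtheta}: writing $s=u+iv$, there is $a=a(\xi)>0$ with $e^{\pm 2t\,\text{Im}\,\theta(s)}\le e^{-atv}$ throughout each cone. Proposition~\ref{proR} supplies the splitting $|\bar\partial R_j(s)|\lesssim |r'(|s|)|+|s|^{-1/2}$, and I handle the two pieces separately by integrating $u$ first. For the $r'$-piece, Cauchy--Schwarz in $u$, together with the weighted $L^2$-bound delivered by $r\in H^{1,1}$, and the elementary estimate $\|(s-i/2)^{-1}\|_{L^2(v/\tan\varphi,\infty)}\lesssim |v-1/2|^{-1/2}$, yields an inner bound of order $(1+v)^{-1/2}|v-1/2|^{-1/2}$. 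For the $|s|^{-1/2}$-piece, H\"older with conjugate exponents $(p,q)$, $p>2$, gives $\||s|^{-1/2}\|_{L^p(u)}\lesssim v^{1/p-1/2}$ and $\|(s-i/2)^{-1}\|_{L^q(u)}\lesssim |v-1/2|^{1/q-1}$, hence an inner bound of order $v^{1/p-1/2}|v-1/2|^{1/q-1}$.

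The decisive improvement over the $t^{-1/2}$ rate of Proposition~\ref{Joperator} appears in the outer $v$-integral: because $k_I=\text{Im}(i/2)=1/2$ is a \emph{fixed} positive constant here, the weight $e^{-atv}$ concentrates mass near $v=0$ while the integrable singularity at $v=1/2$ contributes only an exponentially small $O(e^{-at/2})$ remainder. Rescaling $v=w/t$ in the near-zero region converts $\int_0^\infty e^{-atv}\,v^{1/p-1/2}\,dv$ into $O(t^{-1/2-1/p})$. Setting $1/p=1/2-2\rho$, equivalently $p=2/(1-4\rho)>2$ for any $\rho\in(0,1/4)$, this produces exactly $t^{-1+2\rho}$ and proves (\ref{m3i}); the $r'$-piece is harmless, yielding the faster rate $O(t^{-1})$. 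For the asymptotic expansion I Taylor-expand $1/(s-k)=1/(s-i/2)+(k-i/2)/(s-i/2)^2+\cdots$, legitimate because $|s-i/2|\ge c_\varphi$ on the support; reading off the $(k-i/2)$-coefficient gives the claimed formula for $M_1^{(3)}$, and since $|s-i/2|^{-2}\le c_\varphi^{-2}$ is bounded, the same H\"older scheme immediately yields (\ref{M31}).

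The main obstacle I anticipate is the careful H\"older bookkeeping at the borderline exponent $p\to 2^+$: one must verify that the endpoint singularity $v^{1/p-1/2}$ at $v=0$ and the singularity $|v-1/2|^{1/q-1}$ at $v=1/2$ both survive integration without the constants blowing up, while matching the rate $t^{-1+2\rho}$. A subsidiary technicality arises for $\xi>2$, where the factorization (\ref{V2}) forces $R_j$ to carry a factor of $(1-|r|^2)^{-1}$; handling this requires the a priori bound $\|r\|_{L^\infty}<1$ so that the composed functions remain in $H^{1,1}$ and the above $\bar\partial$-analysis applies unchanged.
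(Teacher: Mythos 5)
Your argument is correct and reaches the stated rate, and structurally it mirrors the paper's proof: bound $\|M^{(3)}\|_{L^\infty}$ via Proposition \ref{Joperator}, reduce to the double integral of $|\bar\partial R_j|\,e^{\pm 2t\,\mathrm{Im}\,\theta}/|s-\tfrac{i}{2}|$ over the cones, split off the $r'$-piece (which gives $O(t^{-1})$), and exploit the fact that $\mathrm{Im}\,\theta$ has a sign proportional to $v$ with no stationary point so the $v$-integral concentrates at $v=0$. The one substantive difference is where the $t^{2\rho}$ loss is generated: you keep the weaker bound (\ref{dbarRj}), $|\bar\partial R_j|\lesssim |r'|+|s|^{-1/2}$, and tune the H\"older exponent $p=2/(1-4\rho)$ so that $\||s|^{-1/2}\|_{L^p_u}\lesssim v^{-2\rho}$, whereas the paper switches to the sharper bound (\ref{dbarRj3}), $|\bar\partial R_j|\lesssim |r'|+|s|^{-1}$, and produces the same $v^{-2\rho}$ by redistributing powers between $|s|^{-1}$ and $|s-\tfrac{i}{2}|^{-1}$ on the region $v<\tfrac14$. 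Both routes give $\int_0^{\cdot}e^{c'tv}v^{-2\rho}\,dv\lesssim t^{-1+2\rho}$ for each fixed $\rho\in(0,\tfrac14)$ with a constant that degenerates only as $\rho\to 0$, which is all the proposition claims; your version has the minor advantage of needing only $r'\in L^2$ (Cauchy--Schwarz) where the paper invokes $\|r'\|_{L^1(\mathbb{R}^+)}$. One small caution on your last step: for (\ref{M31}) you should bound only one of the two factors $|s-\tfrac{i}{2}|^{-1}$ by $c_\varphi^{-1}$ and retain the other inside the H\"older pairing, since dropping both leaves a $u$-integral of $|s|^{-1/2}$ that diverges at infinity.
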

											\begin{proof}
												First we estimate (\ref{m3i}). The proof proceeds along the same steps as the proof of above Proposition. 	(\ref{opers}) and (\ref{fest914}) implies that for large $t$,   $\parallel M^{(3)}\parallel_\infty \lesssim1$. And here we only estimate the integral on sector $\Omega_1$ as $\xi<-1/4$. Let $s=u+vi$.	We also divide $M^{(3)}(\frac{i}{2})-I$ to two parts:
												\begin{equation}
													\frac{1}{\pi}\iint_\mathbb{C}\dfrac{M^{(3)}(s)W^{(3)} (s)}{s-\frac{i}{2}}dA(s)\lesssim I_3+I_4,
												\end{equation}
												with
												\begin{align}
													I_3=\iint_{\Omega_1}\dfrac{|r' (|s|)|e^{2t\text{Im}\theta}}{|\frac{i}{2}-s|}dA(s),\hspace{0.5cm}
													I_4=\iint_{\Omega_1}\dfrac{|s|^{-\frac{1}{2}}e^{2t\text{Im}\theta}}{|\frac{i}{2}-s|}dA(s).	
												\end{align}	
												For $s\in\Omega_1$, there exists a constant $c>0$ so that $|\frac{i}{2}-s|\leq c $. So
												\begin{align}
													I_3\leq&\int_{0}^{+\infty}\int_{v}^{+\infty}\dfrac{|r' (|s|)|e^{c'tv}}{|\frac{i}{2}-s|}dudv\nonumber\\
													&\lesssim \int_{0}^{+\infty} \parallel r'\parallel_{L^1(\mathbb{R}^+)}  e^{c'tv} dv\lesssim \int_{0}^{+\infty}  e^{c'tv} dv\lesssim t^{-1}.\nonumber
												\end{align}
												As for $I_4$, we partition it to two parts:
												\begin{align}
													 I_4\leq\int_{0}^{\frac{1}{4}}\int_{v}^{+\infty}\dfrac{|s|^{-\frac{1}{2}}e^{c'tv}}{|\frac{i}{2}-s|}dudv+\int_{\frac{1}{4}}^{+\infty}\int_{v}^{+\infty}\dfrac{|s|^{-\frac{1}{2}}e^{c'tv}}{|\frac{i}{2}-s|}dudv:=I_{41}+I_{42}.
												\end{align}
												For $0<v<\frac{1}{4}$, $|s-\frac{i}{2}|^2=u^2+(v-\frac{1}{2})^2>u^2+v^2=|s|^2$, while as $v>\frac{1}{4}$, $|s-\frac{i}{2}|^2<|s|^2$. Then  the first integral has:
												\begin{align}
													&I_{41}\leq\int_{0}^{\frac{1}{4}}\int_{v}^{+\infty} (u^2+v^2)^{-\frac{1}{4}-\rho}(u^2+(v-\frac{1}{2})^2)^{-\frac{1}{2}+\rho}due^{c'tv}dv\nonumber\\
													&\leq \int_{0}^{\frac{1}{4}}\left[ \int_{v}^{+\infty}\left(1+\left(\frac{u}{v} \right)^2  \right) ^{-\frac{1}{4}-\rho}v^{\frac{1}{2}-2\rho} d\frac{u}{v}\right]  (v^2+(v-\frac{1}{2})^2)^{-\frac{1}{2}+\rho}e^{c'tv}dv\nonumber\\
													&\lesssim  \int_{0}^{\frac{1}{4}}v^{\frac{1}{2}-2\rho}(\frac{1}{8})^{-\frac{1}{2}+\rho}e^{c'tv}dv\lesssim t^{-\frac{3}{2}+2\rho}.
												\end{align}
												The second integral can be bounded in similar way:
												\begin{align}
													\quad \quad &I_{42}\leq\int_{\frac{1}{4}}^{+\infty}e^{c'tv}\parallel |s|^{-\frac{1}{2}}\parallel_{L^p} \parallel |\frac{i}{2}-s|^{-1}\parallel_{L^q} dv \leq\int_{\frac{1}{4}}^{+\infty}(v)^{-\frac{1}{2}+2\rho}|v-\frac{1}{2}|^{-2\rho}e^{c'tv}dv\nonumber\\
													&= \int_{\frac{1}{4}}^{\frac{1}{2}} v^{-\frac{1}{2}+2\rho}(\frac{1}{2}-v)^{-2\rho}e^{c'tv}dv+\int_{\frac{1}{2}}^{1}v^{-\frac{1}{2}+2\rho}(v-\frac{1}{2})^{-2\rho}e^{c'tv}dv+\int_{1}^{+\infty}v^{-\frac{1}{2}+2\rho}(v-\frac{1}{2})^{-2\rho}e^{c'tv}dv\nonumber\\
													&\leq e^{\frac{c'}{4}t}\int_{\frac{1}{4}}^{\frac{1}{2}} (\frac{1}{2}-v)^{-2\rho}dv+e^{\frac{c'}{2}t}\int_{\frac{1}{2}}^{1}(v-\frac{1}{2})^{-2\rho}dv\int_{1}^{+\infty}v^{-\frac{1}{2}+2\rho}e^{c'tv}dv\lesssim t^{-(\frac{1}{2}+2\rho)}.\nonumber
												\end{align}
												This estimation  is strong enough to obtain the result (\ref{m3i}). And (\ref{M31}) is obtained by deflating $|\frac{i}{2}-s|^2$ to a constant   for $s\in\Omega_1$.
											\end{proof}
											
											\subsection{Long time asymptotic behaviors}
											We begin to construct the long time asymptotics of the CH equation (\ref{ch}).
											According to the series of RHP transformations we have done before, $M(k)$ is given as follows.\\
											(i) For $\xi<-\frac{1}{4}$,
											\begin{align}
												M(k)=&\left(I-\frac{1}{k-1}\sigma_1\right)^{-1}\left[I-\frac{1}{k-1}\sigma_1\left(M^{(3)}(1)M^{err}(1)\right)^{-1}\right] M^{(3)}(k)M^{err}(k) \label{ope}\\
												:=&F(y,t,k)M^{(3)}(y,t,k)M^{err}(y,t,k);\nonumber
											\end{align}
											To  reconstruct   $u(y,t)$ by using (\ref{cover}),    in above  equation ,  we  take    $k\to \frac{i}{2}$   out of $\bar{\Omega}$.
											Further taking the Laurent expansion of each element under the large time asymptotics into (\ref{ope}) ,  we obtain that
											\begin{align}
												 M(k)=&\left(I+\mathcal{O}(t^{-\frac{1}{2}})\right)\left(I+\mathcal{O}(t^{-\frac{3}{2}+2\rho})+\mathcal{O}((k-\frac{i}{2})^2)\right)\left(I+\mathcal{O}(t^{-\frac{1}{2}})+\mathcal{O}((k-\frac{i}{2})^2)\right)\nonumber \\
												=&I+\mathcal{O}(t^{-\frac{1}{2}})+\mathcal{O}((k-\frac{i}{2})^2).
											\end{align}
											Let $f(k)=\left(M_{11}(k)+M_{21}(k)\right)\left(M_{12}(k)+M_{22}(k)\right)$, then (\ref{cover}) comes to
											\begin{align}
												u(y,t)=\frac{1}{2i}\frac{f'(\frac{i}{2})}{f(\frac{i}{2})}. \label{newu}
											\end{align}
											Substituting  above estimates  into  (\ref{newu}) and (\ref{xy}),
											\begin{align}
												 u(x,t)=&u(y(x,t),t)=\frac{1}{2i}\left(\frac{M_{11}'(\frac{i}{2})+M_{21}'(\frac{i}{2})}{M_{11}(\frac{i}{2})+M_{21}(\frac{i}{2})}+\frac{M_{12}'(\frac{i}{2})+M_{22}'(\frac{i}{2})}{M_{12}(\frac{i}{2})+M_{22}(\frac{i}{2})}\right)=\mathcal{O}(t^{-\frac{1}{2}}),  \label{resultu}
											\end{align}
											and
											\begin{align}
												x(y,t)&=y+\ln\left( \frac{M_{11}(\frac{i}{2})+M_{21}(\frac{i}{2})}{\left[M_{12}(\frac{i}{2})+M_{22}(\frac{i}{2})\right]a^2(\frac{i}{2})}\right). \nonumber
											\end{align}
											
											(ii) For $\xi>2$,
											\begin{align}
												M(k)=&\left(I-\frac{1}{k-1}\sigma_1\right)^{-1}\left[I-\frac{1}{k-1}\sigma_1\left(M^{J}(1)\right)^{-1}\right] M^J(k) \nonumber\\
												:=&F(y,t,k)M^J(k)=F(y,t,k)M^{(3)}(k)M^{err}(k)M^{\Lambda}(k)T^{-\sigma_3}(k);\label{ope4}
											\end{align}
											
											By simple calculations, let
											\begin{align}
												F(y,t,k)=
												\nonumber
												\left(\begin{array}{cc}
													F_{11}+F_{12}(k-\frac{i}{2})& F_{21}+F_{22}(k-\frac{i}{2})\\
													F_{31}+F_{32}(k-\frac{i}{2})& F_{41}+F_{42}(k-\frac{i}{2})
												\end{array}\right)+\mathcal{O}((k-\frac{i}{2})^2)+\mathcal{O}(t^{-\frac{1}{2}}).
											\end{align}
											Denote the asymptotic expansion of $T^{-1}(k)$ at $\frac{i}{2}$ as
											\begin{align}
												T^{-1}(k)=T_{0,-1}+T_{1,-1}(k-\frac{i}{2})+\mathcal{O}((k-\frac{i}{2})^2),\nonumber
											\end{align}
											then the Laurent expansion of $M(k)$ at $\frac{i}{2}$ can be written as
											\begin{align}\label{3-99}
												M(k)=\left(\begin{array}{cc}
													f_{11}(y,t,k)& f_{12}(y,t,k)\\
													f_{21}(y,t,k) & f_{22}(y,t,k)
												\end{array}\right)+ \mathcal{O}((k-\frac{i}{2})^2)+\mathcal{O}(t^{-\frac{1}{2}}),
											\end{align}
											where
											\begin{align}
												f_{11}:=&f_{1,11}\left(T_{0,-1}+T_{1,-1}(k-\frac{i}{2})\right)+f_{2,11}T_{0,-1}\left(k-\frac{i}{2}\right)\nonumber \\
												=&\left(F_{11}M_{11}^{\Lambda}(\frac{i}{2})+F_{21}M_{21}^{\Lambda}(\frac{i}{2})\right)\left(T_{0,-1}+T_{1,-1}(k-\frac{i}{2})\right)+ \nonumber \\ &\left(F_{11}M_{1,11}^{\Lambda}+F_{12}M_{11}^{\Lambda}(\frac{i}{2})+F_{21}M_{1,21}^{\Lambda}+F_{22}M_{21}^{\Lambda}(\frac{i}{2})\right)T_{0,-1}\left(k-\frac{i}{2}\right);\nonumber \\
												f_{12}:=&f_{1,12}\left(T_{0}+T_{1}(k-\frac{i}{2})\right)+f_{2,12}T_{0}\left(k-\frac{i}{2}\right)\nonumber \\
												=&\left(F_{11}M_{12}^{\Lambda}(\frac{i}{2})+F_{21}M_{22}^{\Lambda}(\frac{i}{2})\right)\left(T_{0}+T_{1}(k-\frac{i}{2})\right)+ \nonumber \\ &\left(F_{11}M_{1,12}^{\Lambda}+F_{12}M_{12}^{\Lambda}(\frac{i}{2})+F_{21}M_{1,22}^{\Lambda}+F_{22}M_{22}^{\Lambda}(\frac{i}{2})\right)T_{0}\left(k-\frac{i}{2}\right); \nonumber \\
												f_{22}:=&f_{1,22}\left(T_{0}+T_{1}(k-\frac{i}{2})\right)+f_{2,22}T_{0}\left(k-\frac{i}{2}\right)\nonumber\\
												=&\left(F_{31}M_{12}^{\Lambda}(\frac{i}{2})+F_{41}M_{22}^{\Lambda}(\frac{i}{2})\right)\left(T_{0}+T_{1}(k-\frac{i}{2})\right)+ \nonumber \\ &\left(F_{41}M_{1,12}^{\Lambda}+F_{32}M_{12}^{\Lambda}(\frac{i}{2})+F_{41}M_{1,22}^{\Lambda}+F_{42}M_{22}^{\Lambda}(\frac{i}{2})\right)T_{0}\left(k-\frac{i}{2}\right); \nonumber \\
												f_{21}:=&f_{1,21}\left(T_{0,-1}+T_{1,-1}(k-\frac{i}{2})\right)+f_{2,21}T_{0,-1}\left(k-\frac{i}{2}\right)\nonumber\\
												=&\left(F_{31}M_{11}^{\Lambda}(\frac{i}{2})+F_{41}M_{21}^{\Lambda}(\frac{i}{2})\right)\left(T_{0,-1}+T_{1,-1}(k-\frac{i}{2})\right)+ \nonumber \\ &\left(F_{31}M_{1,11}^{\Lambda}+F_{32}M_{11}^{\Lambda}(\frac{i}{2})+F_{41}M_{1,21}^{\Lambda}+F_{42}M_{21}^{\Lambda}(\frac{i}{2})\right)T_{0,-1}\left(k-\frac{i}{2}\right). \nonumber
											\end{align}
											Substituting each component of $M(k)$ into $(\ref{newu})$,  it follows that
											\begin{align}
												u(x,t)=&u(y(x,t),t):=u^{r}(y,t,\tilde{\mathcal{D}}_\Lambda)+\mathcal{O}(t^{-\frac{1}{2}}),
											\end{align}
											where $u^{r}$ is called as modulation-solitons and it has the following expressions
											\begin{align}\label{3-101}
												 u^{r}(y,t,\tilde{\mathcal{D}}_\Lambda)=\frac{1}{2i}\left(\frac{T_{1,-1}}{T_{0,-1}}+\frac{T_{1}}{T_{0}}+\frac{f_{2,11}+f_{2,21}}{f_{1,11}+f_{1,21}}+\frac{f_{2,12}+f_{2,22}}{f_{1,12}+f_{1,22}}\right);
											\end{align}
											and
											\begin{align}
												x(y,t)=y+\ln\left( \frac{M_{11}(\frac{i}{2})+M_{21}(\frac{i}{2})}{\left[M_{12}(\frac{i}{2})+M_{22}(\frac{i}{2})\right]a^2(\frac{i}{2})}\right) +\log{\frac{(f_{1,11}+f_{1,21})T_{0,-1}}{(f_{1,12}+f_{1,22})T_0}}.
											\end{align}
Finally,  summing up above results  gives  the following  theorem
											
											\begin{theorem}\label{last}   Let $u(x,t)$ be the solution for  the initial-value problem (\ref{ch})-(\ref{initial}) with generic data   $u_0 \in H^{4,2}(\mathbb{R})$ and scatting data $\left\lbrace  r(k),\left\lbrace ik_n,c_n\right\rbrace_{n\in \aleph}\right\rbrace$.
												And  $u^r(y,t;\tilde{\mathcal{D}}_\Lambda)$  denote  the modulation-soliton solution corresponding to  scattering data
												$\tilde{\mathcal{D}}_\Lambda=\left\lbrace  0,\left\lbrace ik_n,c_n\delta(ik_n)\right\rbrace_{n\in\Lambda}\right\rbrace$. Then there exist a large constant $T_1=T_1(\xi), \  \xi=\frac{y}{t}$,  such that   for all $t>T_1$,
												
												\noindent 1. For $\xi\in(-\infty,-1/4)$, we have asymptotic expansion
												\begin{align}
													u(x,t)=\mathcal{O}(t^{-\frac{1}{2}}),\nonumber
												\end{align}
												with
												$$ x(y,t)=y+\ln\left( \frac{M_{11}(\frac{i}{2})+M_{21}(\frac{i}{2})}{\left[M_{12}(\frac{i}{2})+M_{22}(\frac{i}{2})\right]a^2(\frac{i}{2})}\right).$$
												
												
												\noindent 2. For $\xi\in(2,+\infty)$, we have asymptotic expansion
												\begin{align}
													u(x,t)=u^{r}(y,t,\tilde{\mathcal{D}}_\Lambda)+\mathcal{O}(t^{-\frac{1}{2}}),\nonumber
												\end{align}
												with
												$$ x(y,t)=y+\ln\left( \frac{M_{11}(\frac{i}{2})+M_{21}(\frac{i}{2})}{\left[M_{12}(\frac{i}{2})+M_{22}(\frac{i}{2})\right]a^2(\frac{i}{2})}\right) +\log{\frac{(f_{1,11}+f_{1,21})T_{0,-1}}{(f_{1,12}+f_{1,22})T_0}},
												$$
												where   $u^r(y,t;\tilde{\mathcal{D}}_\Lambda)$, $f_{1,ij}(i,j=1,2 )$is shown in  (\ref{3-99})-(\ref{3-101}).
												
												
											\end{theorem}
											
											\section{Long-time asymptotics  in region  with  phase points}\label{sec4}
											As we shown in Subsection \ref{subsec2.3},
											for the cases II and III,
											there exist four and two  stationary phase points on the real axis,  denoted as $\xi_1>...>\xi_4$ and $\xi_1>\xi_2$ respectively.
											We use these phase points  to  divide the real axis $\mathbb{R}$ in the following way.
											Denote $\xi_0=+\infty$, $\xi_{n(\xi)+1}=-\infty$, and introduce some  intervals  when $j=1,...,n(\xi)$.
											For the case  $0<\xi<2$
											\begin{align}
												I_{j1}=I_{j2}=\left\{ \begin{array}{ll}
													\left( \frac{\xi_j+\xi_{j+1}}{2},\xi_j\right),\    & j\text{ is  odd },\\[10pt]
													\left(\xi_j ,\frac{\xi_j+\xi_{j-1}}{2}\right),   &j\text{ is  even },
												\end{array}\right.\label{In1}\
												I_{j3}=I_{j4}=\left\{ \begin{array}{ll}
													\left(\xi_j ,\frac{\xi_j+\xi_{j-1}}{2}\right),\    & j\text{ is  odd},\\[10pt]
													\left( \frac{\xi_j+\xi_{j+1}}{2},\xi_j\right),   &j\text{ is  even};
												\end{array}\right.
											\end{align}
											and for the case  $-1/4<\xi<0$,
											\begin{align}
												I_{j1}=I_{j2}=\left\{ \begin{array}{ll}
													\left(\xi_j ,\frac{\xi_j+\xi_{j-1}}{2}\right),\    & j\text{ is  odd},\\[10pt]
													\left( \frac{\xi_j+\xi_{j+1}}{2},\xi_j\right),   &j\text{ is  even},
												\end{array}\right.\
												I_{j3}=I_{j4}=\left\{ \begin{array}{ll}
													\left( \frac{\xi_j+\xi_{j+1}}{2},\xi_j\right),\    & j\text{ is  odd},\\[10pt]
													\left(\xi_j ,\frac{\xi_j+\xi_{j-1}}{2}\right),   &j\text{ is  even}.
												\end{array}\right. \label{In2}
											\end{align}
											For brevity, we denote
											\begin{align}
												n(\xi)=\left\{ \begin{array}{ll}
													2 , &\text{as } 0\leq\xi<2,\\[10pt]
													4,   &\text{as } -1/4<\xi<0,
												\end{array}\right.
											\end{align}
											as the number of stationary phase points.
											Denote
											\begin{align}
												L(\xi)=\left\{ \begin{array}{ll}
													(\xi_2,\xi_1),   &\text{as } 0\leq\xi<2,\\[4pt]
													(-\infty,\xi_4)\cup(\xi_{3},\xi_{2})\cup(\xi_1,+\infty),   &\text{as } -1/4<\xi<0.\\
												\end{array}\right.
											\end{align}
											In  the above formulas, we choose the principal branch of power and logarithm functions. We introduce   a  sign
											\begin{align}
												\eta(\xi,\xi_j)=\left\{ \begin{array}{ll}
													(-1)^j,   &\text{as } -1/4<\xi<0;\\
													(-1)^{j+1},   &\text{as } 0\leq\xi<2, \ \ j=1,\cdots, n(\xi).
												\end{array}\right.\label{eta}
											\end{align}
											Same as Subsection \ref{subsec3.1},  in these two case, $T(k)=\prod_{ik_n\in\Delta^+}\frac{ k+ik_n}{k-ik_n}\delta(k,\xi)$ admit a new proposition
											
\begin{Proposition}
												As $k\to \xi_j$ along any ray $\xi_j+e^{i\phi}\mathbb{R}^+$ with $|\phi|<\pi$,
												\begin{align}
													|T(k,\xi)-T_j(\xi)\left( \eta(\xi,\xi_j)(k-\xi_j)\right) ^{\eta(\xi,\xi_j) i\nu(\xi_j)}|\lesssim \parallel r\parallel_{H^{1}(\mathbb{R})}|k-\xi_j|^{1/2},\label{T-TJ}
												\end{align}
												where
												\begin{align}
&T_j(\xi)=\prod_{ik_n\in\Delta^+}\frac{ k+ik_n}{k-ik_n}e^{i\beta_j(\xi_j,\xi)},  \ j=1,...,n(\xi),\nonumber\\
&\beta_j(k,\xi)=\int_{L(\xi)}\frac{\nu(s)}{s-k}ds-\eta(\xi,\xi_j)\log\left( \eta(\xi,\xi_j)(k-\xi_j)\right) \nu(\xi_j).\nonumber
												\end{align}
											\end{Proposition}

											\begin{proof}
												Simply calculation gives that
												\begin{align*}
													|(k-\xi_j)^{i\eta(\xi,\xi_j)\nu(\xi_j)}|\leq e^{-\pi\nu(\xi_j)}=\sqrt{1+|r(\xi_j)|^2}\label{key}.
												\end{align*}
Further,
												\begin{align*}
													|\delta(k,\xi)-e^{\beta_j(\xi_j,\xi)}|&=\Big|\exp\left\lbrace i\eta(\xi,\xi_j)\log(\eta(\xi,\xi_j)(k-\xi_j)\nu(\xi_j)) \right\rbrace\left( e^{i\beta_j(k,\xi)}-e^{i\beta_j(\xi_j,\xi)}\right) \Big|.
												\end{align*}
												Then by $|\beta_j(k,\xi)-\beta_j(\xi_j,\xi)|\lesssim \parallel r\parallel_{H^{1 }(\mathbb{R})}|k-\xi_j|^{1/2}$,  we get (\ref{T-TJ}).
											\end{proof}
											
Making a transformation
$$ M^{(1)}(k)=M^J(k)T(k)^{ \sigma_3}, $$
then the RHP \ref{RHP2-2} change to
											\begin{RHP} \label{rhp10}
												Find a matrix-valued function $ M^{(1)}(k) $ which satisfies:
												
												$\blacktriangleright$ Analyticity: $M^{(1)}(k)$ is analytic in $\mathbb{C}\setminus (\mathbb{R}\bigcup\mathcal{Z})$ and has single poles;
												
												$\blacktriangleright$ Jump condition: $M^{(1)}(k)$ has continuous boundary values $M^{(1)}_\pm(k)$ on $\mathbb{R}$ and
												\begin{equation}
													M_+^{(1)}(k)=M_-^{(1)}(k)V^{(1)}(k,\xi),\hspace{0.5cm}k \in \mathbb{R},
												\end{equation}
												where
												\begin{equation}
													V^{(1)}(k,\xi)=\left\{\begin{array}{ll}
														\left(\begin{array}{cc}
															1 & 0 \\
															-T^2\overline{r}e^{2it\theta}&  1
														\end{array}\right)\left(\begin{array}{cc}
															1 & T^{-2}re^{-2it\theta} \\
															0 &  1
														\end{array}\right),   &\text{as }k\in L(\xi),\\[12pt]
														\left(\begin{array}{cc}
															1 & \frac{r(k)e^{-2it\theta}T_-^{-2}(k)}{1-|r(k)|^2}\\
															0 & 1
														\end{array}\right)\left(\begin{array}{cc}
															1 & 0 \\
															\frac{-\bar{r}(k)e^{2it\theta}T_+^{2}(k)}{1-|r(k)|^2} & 1
														\end{array}\right),   &\text{as } k\in 	\mathbb{R}\setminus L(\xi);
													\end{array}\right.\label{jumpv1}
												\end{equation}
												
												$\blacktriangleright$ Asymptotic behaviors:
												\begin{align}
													M^{(1)}(k) = I+\mathcal{O}(k^{-1}),\hspace{0.5cm}k \rightarrow \infty;
												\end{align}
												
												$\blacktriangleright$ Residue conditions: $M^{(1)}(k)$ has simple poles at each point in $ \mathcal{Z} $ with:
												\begin{align}
													\res_{k=ik_n}M^{(1)}(k)=\left\{\begin{array}{ll}\lim_{k\rightarrow ik_n}M^{(1)}(k)\left(\begin{array}{cc}
															0 & 0\\
															c_n^{-1} e^{2it\theta{(ik_n)}}[(\frac{1}{T})^{'}(ik_n)]^{-2}& 0
														\end{array}\right),   &\text{as } ik_n\in 	\Delta^+,\\[12pt]
														\lim_{k\rightarrow ik_n}M^{(1)}(k)\left(\begin{array}{cc}
															0 & c_nT^{-2}e^{-2it\theta{(ik_n)}}\\
															0 & 0
														\end{array}\right),   &\text{as } ik_n\in 	\Delta^-\bigcup\bigwedge;\\[12pt]
													\end{array}
													\right. \nonumber
												\end{align}
																		\begin{align}
																			\res_{k=-ik_n}M^{(1)}(k)=\left\{\begin{array}{ll}\lim_{k\rightarrow -ik_n}M^{(1)}(k)\left(\begin{array}{cc}
																					0 &  c_n^{-1} e^{-2it\theta{(-ik_n)}}[T^{'}(-ik_n)]^{-2}\\
																					0 & 0
																				\end{array}\right),   &\text{as } ik_n\in 	\Delta^+,\\[12pt]
																				\lim_{k\rightarrow -ik_n}M^{(1)}(k)\left(\begin{array}{cc}
																					0 & 0\\
																					c_nT^{2}e^{2it\theta{(-ik_n)}} & 0
																				\end{array}\right),   &\text{as } ik_n\in 	\Delta^-\bigcup\bigwedge;\\[12pt]
																			\end{array}
																			\right.\nonumber
																		\end{align}
																	\end{RHP}

																	\begin{figure}[h]
																		\begin{center}
																			\subfigure[]{
																				\begin{tikzpicture}
																					\draw[white, fill=white]
																					(-5,0.8)--(-5,-0.8)--(-2.5,0)--(0,0.8)--(0,-0.8);
																					\draw[white, fill=white]
																					(5,0.8)--(5,-0.8)--(2.5,0)--(0,0.8)--(0,-0.8);
																					\draw[->,dashed,red,thick](-5.5,0)--(5.5,0)node[right]{ \textcolor{black}{Re$k$}};
																					\draw[->,dashed,red,thick](0,-1.5)--(0,1.5)node[right]{\textcolor{black}{Im$k$}};
																					\coordinate (I) at (0,0);
																					\fill (I) circle (1pt) node[below] {$0$};
																					\draw(-5,0.8)--(-2.5,0);
																					\draw[-<](-5,0.8)--(-3.5,0.32)node[above]{\scriptsize$\Sigma_{24}$};
																					\draw(-5,-0.8)--(-2.5,0.0);
																					\draw[-<](-5,-0.8)--(-3.5,-0.32)node[below]{\scriptsize$\Sigma_{23}$};
																					\draw(-2.5,0)--(0,-0.8);
																					\draw(-2.5,0)--(0,0.8);
																					\draw[->](-2.5,0)--(-1.5,-0.32)node[below]{\scriptsize$\Sigma_{22}$};
																					\draw[->](-2.5,0)--(-1.5,0.32)node[above]{\scriptsize$\Sigma_{21}$};
																					\draw(5,0.8)--(2.5,0);
																					\draw[-<](5,0.8)--(3.5,0.32)node[above]{\scriptsize$\Sigma_{14}$};
																					\draw(5,-0.8)--(2.5,0.0);
																					\draw[-<](5,-0.8)--(3.5,-0.32)node[below]{\scriptsize$\Sigma_{13}$};
																					\draw(2.5,0)--(0,-0.8);
																					\draw(2.5,0)--(0,0.8);
																					\draw[->](2.5,0)--(1.5,-0.32)node[below]{\scriptsize$\Sigma_{12}$};
																					\draw[->](2.5,0)--(1.5,0.32)node[above]{\scriptsize$\Sigma_{11}$};
																					\draw[->](0,0)--(0,0.8)node[above]{\scriptsize$\Sigma_{2+}'$};
																					\draw[->](0,0)--(0,-0.8)node[below]{\scriptsize$\Sigma_{2-}'$};
																					\coordinate (R) at (-2.5,0);
																					\fill (R) circle (1pt) node[blue,below] {$\xi_2$};
																					\coordinate (T) at (2.5,0);
																					\fill (T) circle (1pt) node[blue,below] {$\xi_1$};
																					\coordinate (e) at (-4,-0.1);
																					\fill (e) circle (0pt) node[above] {\tiny$\Omega_{24}$};
																					\coordinate (e1) at (-4,-0.1);
																					\fill (e1) circle (0pt) node[below] {\tiny$\Omega_{23}$};
																					\coordinate (r) at (-0.5,-0.1);
																					\fill (r) circle (0pt) node[above] {\tiny$\Omega_{21}$};
																					\coordinate (r1) at (-0.5,-0.1);
																					\fill (r1) circle (0pt) node[below] {\tiny$\Omega_{22}$};
																					\coordinate (e) at (4,-0.1);
																					\fill (e) circle (0pt) node[above] {\tiny$\Omega_{14}$};
																					\coordinate (e1) at (4,-0.1);
																					\fill (e1) circle (0pt) node[below] {\tiny$\Omega_{13}$};
																					\coordinate (r) at (0.5,-0.1);
																					\fill (r) circle (0pt) node[above] {\tiny$\Omega_{11}$};
																					\coordinate (r1) at (0.5,-0.1);
																					\fill (r1) circle (0pt) node[below] {\tiny$\Omega_{12}$};
																				\end{tikzpicture}
																				\label{case1}}
																			\subfigure[]{
																				\begin{tikzpicture}
																					\draw[white, fill=white] (-5,0.5)--(-5,-0.5)--(-4,0)--(-2.5,-0.6)--(-1,0)--(0,-0.5)--(1,0)--(2.5,-0.6)--(4,0)--(5,-0.5)--(5,0.5)--(4,0)--(2.5,0.6)--(1,0)--(0,0.5)--(-1,0)--(-2.5,0.6)--(-4,0)--(-5,0.5);
																					\draw(-4,0)--(-5,0.5)node[above]{\scriptsize$\Sigma_{41}$};
																					\draw[->](-4,0)--(-4.5,0.25);
																					\draw(-4,0)--(-2.5,0.6);
																					\draw[->](-4,0)--(-3.25,-0.3)node[below]{\scriptsize$\Sigma_{43}$};
																					\draw(-4,0)--(-5,-0.5)node[below]{\scriptsize$\Sigma_{42}$};
																					\draw[->](-4,0)--(-3.25,0.3)node[above]{\scriptsize$\Sigma_{44}$};
																					\draw(-4,0)--(-2.5,-0.6);
																					\draw[->](-4,0)--(-4.5,-0.25);
																					\draw(-1,0)--(0,0.5);
																					\draw[->](-1,0)--(-0.5,0.25)node[above]{\scriptsize$\Sigma_{31}$};
																					\draw(-1,0)--(-2.5,0.6);
																					\draw[->](-1,0)--(-1.75,-0.3)node[below]{\scriptsize$\Sigma_{33}$};
																					\draw(-1,0)--(0,-0.5);
																					\draw[->](-1,0)--(-1.75,0.3)node[above]{\scriptsize$\Sigma_{34}$};
																					\draw(-1,0)--(-2.5,-0.6);
																					\draw[->](-1,0)--(-0.5,-0.25)node[below]{\scriptsize$\Sigma_{32}$};
																					\draw[->,dashed,red,thick](-5.5,0)--(5.5,0) node[right] {  \textcolor{black}{Re$k$}};
																					\draw[->,dashed,red,thick](0,-1.5)--(0,1.5)  node[right]{  \textcolor{black}{ Im$k$}};
																					\draw(1,0)--(0,0.5);
																					\draw[->](1,0)--(0.5,0.25)node[above]{\scriptsize$\Sigma_{21}$};
																					\draw(1,0)--(2.5,0.6);
																					\draw[->](1,0)--(1.75,-0.3)node[below]{\scriptsize$\Sigma_{23}$};
																					\draw(1,0)--(0,-0.5);
																					\draw[->](1,0)--(1.75,0.3)node[above]{\scriptsize$\Sigma_{24}$};
																					\draw(1,0)--(2.5,-0.6);
																					\draw[->](1,0)--(0.5,-0.25)node[below]{\scriptsize$\Sigma_{22}$};
																					\draw(4,0)--(5,0.5)node[above]{\scriptsize$\Sigma_{11}$};
																					\draw[->](4,0)--(4.5,0.25);
																					\draw(4,0)--(2.5,0.6);
																					\draw[->](4,0)--(3.25,-0.3)node[below]{\scriptsize$\Sigma_{13}$};
																					\draw(4,0)--(5,-0.5)node[below]{\scriptsize$\Sigma_{12}$};
																					\draw[->](4,0)--(3.25,0.3)node[above]{\scriptsize$\Sigma_{14}$};
																					\draw(4,0)--(2.5,-0.6);
																					\draw[->](4,0)--(4.5,-0.25);
																					\draw[->](2.5,0)--(2.5,0.6)node[above]{\scriptsize$\Sigma_{1+}'$};
																					\draw[->](2.5,0)--(2.5,-0.6)node[below]{\scriptsize$\Sigma_{1-}'$};
																					\draw[->](-2.5,0)--(-2.5,0.6)node[above]{\scriptsize$\Sigma_{3+}'$};
																					\draw[->](-2.5,0)--(-2.5,-0.6)node[below]{\scriptsize$\Sigma_{3-}'$};
																					\draw[->](0,0)--(0,0.5)node[above]{\scriptsize$\Sigma_{2+}'$};
																					\draw[->](0,0)--(0,-0.5)node[below]{\scriptsize$\Sigma_{2-}'$};
																					\coordinate (I) at (0,0);
																					\fill (I) circle (1pt) node[below] {$0$};
																					\coordinate (A) at (-4,0);
																					\fill (A) circle (1pt) node[blue,below] {$\xi_4$};
																					\coordinate (b) at (-1,0);
																					\fill (b) circle (1pt) node[blue,below] {$\xi_3$};
																					\coordinate (e) at (4,0);
																					\fill (e) circle (1pt) node[blue,below] {$\xi_1$};
																					\coordinate (f) at (1,0);
																					\fill (f) circle (1pt) node[blue,below] {$\xi_2$};
																					\coordinate (ke) at (4.7,0.1);
																					\fill (ke) circle (0pt) node[below] {\tiny$\Omega_{12}$};
																					\coordinate (k1e) at (4.7,-0.1);
																					\fill (k1e) circle (0pt) node[above] {\tiny$\Omega_{11}$};
																					\coordinate (le) at (3,0.1);
																					\fill (le) circle (0pt) node[below] {\tiny$\Omega_{13}$};
																					\coordinate (l1e) at (3,-0.1);
																					\fill (l1e) circle (0pt) node[above] {\tiny$\Omega_{14}$};
																					\coordinate (n2) at (0.27,0.1);
																					\fill (n2) circle (0pt) node[below] {\tiny$\Omega_{22}$};
																					\coordinate (n12) at (0.27,-0.1);
																					\fill (n12) circle (0pt) node[above] {\tiny$\Omega_{21}$};
																					\coordinate (m2) at (2.25,0.1);
																					\fill (m2) circle (0pt) node[below] {\tiny$\Omega_{23}$};
																					\coordinate (m12) at (2.25,-0.1);
																					\fill (m12) circle (0pt) node[above] {\tiny$\Omega_{24}$};
																					\coordinate (k) at (-4.7,0.1);
																					\fill (k) circle (0pt) node[below] {\tiny$\Omega_{42}$};
																					\coordinate (k1) at (-4.7,-0.1);
																					\fill (k1) circle (0pt) node[above] {\tiny$\Omega_{41}$};
																					\coordinate (l) at (-3,0.1);
																					\fill (l) circle (0pt) node[below] {\tiny$\Omega_{43}$};
																					\coordinate (l1) at (-3,-0.1);
																					\fill (l1) circle (0pt) node[above] {\tiny$\Omega_{44}$};
																					\coordinate (n) at (-0.27,0.1);
																					\fill (n) circle (0pt) node[below] {\tiny$\Omega_{32}$};
																					\coordinate (n1) at (-0.27,-0.1);
																					\fill (n1) circle (0pt) node[above] {\tiny$\Omega_{31}$};
																					\coordinate (m) at (-2.2,0.1);
																					\fill (m) circle (0pt) node[below] {\tiny$\Omega_{33}$};
																					\coordinate (m1) at (-2.2,-0.1);
																					\fill (m1) circle (0pt) node[above] {\tiny$\Omega_{34}$};
																				\end{tikzpicture}
																				\label{case2}}
																			\caption{\footnotesize   The stationary phase points of phase function $\theta(k)$.
																				The  figure   (a)  corresponds  to  two phase points for the case   $0< \xi<2$;    The  figure   (b) corresponds  to  four  phase points for  the case   $-1/4<\xi<0$. $\Sigma_{ij}$ separate
																				complex plane $\mathbb{C}$ into some  sectors denoted by $\Omega_{ij}$.}
																			\label{FigOmig}
																		\end{center}
																	\end{figure}
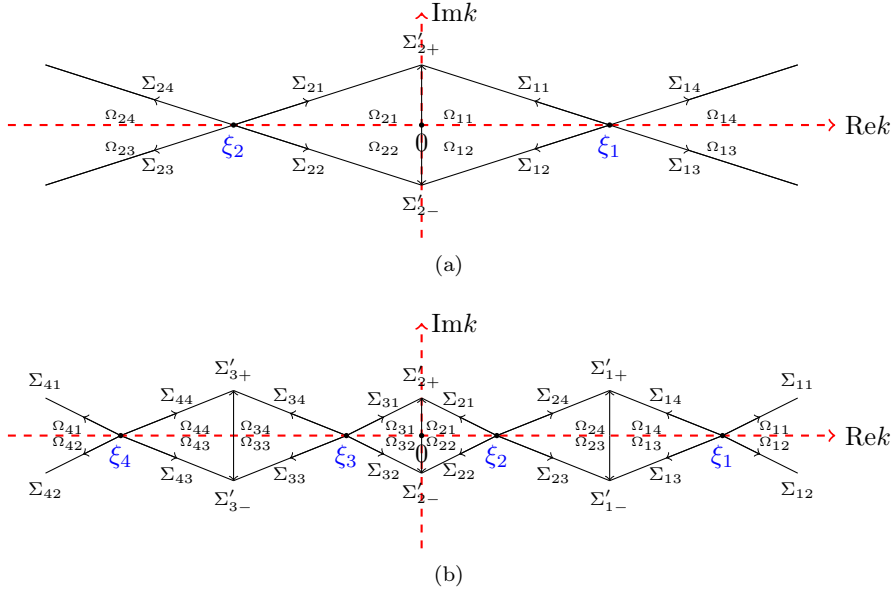
																	We show deformation contours and some notations in Figure \ref{FigOmig}. In addition, let
																	\begin{align}
																		&\tilde{\Sigma}(\xi)=\left( \underset{j=1,..,n(\xi)}{\underset{h=1,...,4,}{\cup}}\Sigma_{jh}\right) \cup\left(\underset{j}{\cup}\Sigma'{j\pm} \right),\hspace{0.5cm}\nonumber\hspace{0.5cm}
																		\Sigma^{(2)}(\xi)=\tilde{\Sigma}(\xi),\nonumber\\
																		&\Omega(\xi)=\underset{j=1,..,n(\xi)}{\underset{h=1,...,4,}{\cup}}\Omega_{jh},\hspace{0.5cm} \Omega_\pm(\xi)=\mathbb{C}\setminus\Omega. \nonumber
																	\end{align}
																	For convenience, denote $\Sigma_{n(\xi)\pm}'=\emptyset$ when $-1/4 <  \xi<0 $ and $\Sigma_{1\pm}'=\emptyset$ when $ 0\leq\xi< 2$.
																	And $\varphi>0$ is an fixed sufficiently small angle  achieving following conditions:\\
																	1.  each $\Omega_{jh}$ doesn't intersect $\left\lbrace k\in\mathbb{C}|\text{Im }\theta(k)=0\right\rbrace  $,\\
																	2. $\tan\varphi<\frac{1}{\xi_{n(\xi)/2}-\xi_{n(\xi)/2+1}}$.

																	We introduce  a new unknown function
																	\begin{equation}
																		R^{(2)}(k,\xi)=\left\{\begin{array}{ccc}
																			\left(\begin{array}{cc}
																				1 & R_{hj}(k,\xi)e^{-2it\theta}\\
																				0 & 1
																			\end{array}\right), & k\in \Omega_{hj},\ h=1,...,n(\xi),\ j=1,3;\\
																			\\
																			\left(\begin{array}{ccc}
																				1 & 0\\
																				R_{hj}(k,\xi)e^{2it\theta} & 1
																			\end{array}\right),  &k\in \Omega_{hj},\ h=1,...,n(\xi),\ j=2,4;\\
																			\\
																			I,  &elsewhere;\\
																		\end{array}\right.\label{R(2)1}
																	\end{equation}
																	where  the functions $R_{hj}(k,\xi)$, $h=1,...,n(\xi)$, $j=1,2,3,4$  are defined in the following Proposition.
																	\begin{Proposition}\label{proR1}
																		As  for   $\xi \in[0,2)$  and   $\xi \in(-1/4,0)$,  the functions $R_{hj}$: $\bar{\Omega}_{hj}\to \mathbb{C}$, $j=1,2,3,4$, $h=1,...,n(\xi)$ have boundary values as follows:
																		\begin{align}
																			&R_{h1}(k,\xi)=\Bigg\{\begin{array}{ll}
																				p_{h1}(k,\xi)T(k)^{2}, & k\in I_{h1},\\
																				p_{h1}(\xi_h,\xi)T_h(\xi)^{2}\left( \eta(\xi,\xi_h)(k-\xi_h)\right)^{2i\eta(\xi,\xi_h)\nu(\xi_h)},  &k\in \Sigma_{h1},\\
																			\end{array} \\
																			&R_{h2}(k,\xi)=\Bigg\{\begin{array}{ll}
																				p_{h2}(\xi_h,\xi)T_h(\xi)^{-2}\left( \eta(\xi,\xi_h)(k-\xi_h)\right)^{-2i\eta(\xi,\xi_h)\nu(\xi_h)},  &k\in \Sigma_{h2},\\
																				p_{h2}(k,\xi)T(k)^{-2}, &k\in  I_{h2},\\
																			\end{array} \\
																			&R_{h3}(k,\xi)=\Bigg\{\begin{array}{ll}
																				p_{h3}(k,\xi)T_-(k)^{-2}, &k\in I_{h3}, \\
																				p_{h3}(\xi_h,\xi)T_h(\xi)^{-2}\left( \eta(\xi,\xi_h)(k-\xi_h)\right)^{-2i\eta(\xi,\xi_h)\nu(\xi_h)}, &k\in \Sigma_{h3},\\
																			\end{array} \\
																			&R_{h4}(k,\xi)=\Bigg\{\begin{array}{ll}
																				p_{h4}(\xi_h,\xi)T_h(\xi)^{2}\left( \eta(\xi,\xi_h)(k-\xi_h)\right) ^{\eta(\xi,\xi_h)2i\nu(\xi_h)},  &k\in \Sigma_{h4},\\
																				p_{h4}(k,\xi)T_+(k)^{2}, &k\in I_{h4},\\
																			\end{array}
																		\end{align}	
																		where $I_{hj}$ are specified in (\ref{In1})-(\ref{In2}) and
																		\begin{align}
																			&p_{h1}(k,\xi)=-\bar{r}(k),\hspace{0.5cm}p_{h3}(k,\xi)=\dfrac{r(k) }{1-|r(k)|^2},\\
																			&p_{h2}(k,\xi)=-r(k),\hspace{0.6cm}p_{h4}(k,\xi)=\dfrac{\bar{r}(k) }{1-|r(k)|^2}.
																		\end{align}
																		The functions  $R_{hj}$  have the following properties:
																		\begin{align}
																			&|R_{hj}(k,\xi)|\lesssim \sin^2(k_0\arg(k-\xi_h))+ \left(1+ \text{Re}(k)^2\right) ^{-1/2}, \ \text{for all $k\in \Omega_{hj}$},\label{R}\\
																			&|\bar{\partial}R_{hj}(k,\xi)|\lesssim|p_{hj}'(\text{Re}k)|+|k-\xi_h|^{-1/2}, \text{for all $k\in \Omega_{hj}$.} \label{dbarRj3}\\
																			&\bar{\partial}R_{hj}(k,\xi)=0,\hspace{0.5cm}\text{if } k, at\  elsewhere.
																		\end{align}
																	\end{Proposition}
																	\begin{proof}
																		The proof is similar as \cite{YF3}.	
																	\end{proof}

Making a transformation
																	\begin{align}
																		 M^{(2)}(k) =M^{(21)}(k)  R^{(2)}(k,\xi),
																	\end{align}
then  the  RHP \ref{rhp10} becomes    to
							
							\begin{RHP}\label{rhp11}
								Find a matrix valued function  $ M^{(2)}(k)$ with following properties:
								
								$\blacktriangleright$ Analyticity:  $M^{(2)}(k)$ is continuous in $\mathbb{C}\backslash(\Sigma^{(2)}\cup\mathcal{Z})$, where $\Sigma^{(2)}=\cup_{j=1}^4\Sigma_j$;
								
								$\blacktriangleright$ Jump condition: $M^{(2)}(k)$ has continuous boundary values $M^{(2)}_\pm(k)$ on $\Sigma^{(2)}$ and
								\begin{equation}
									M^{(2)}_{+}(k)=M^{(2)}_{-}(k)V^{(2)}(k),\hspace{0.5cm}k \in \Sigma^{(2)},
								\end{equation}
								where
								\begin{align}
									V^{(2)}(k,\xi)=\Bigg\{\begin{array}{ll}
										R^{(2)}(k)^{-1}|_{\Sigma_{1}\cup\Sigma_{3}}&\text{as } 	k\in\Sigma_{1}\cup\Sigma_{3};\\[12pt]
										R^{(2)}(k)|_{\Sigma_{2}\cup\Sigma_{4}}&\text{as } 	k\in\Sigma_{2}\cup\Sigma_{4};\\[12pt]
									\end{array}\label{jumpv2}
								\end{align}
								
								$\blacktriangleright$ Asymptotic behaviors:
								\begin{align}
									M^{(2)}(k) = I+\mathcal{O}(k^{-1}),\hspace{0.5cm}k \rightarrow \infty;
								\end{align}
								
								$\blacktriangleright$ $\bar{\partial}$-Derivative: For $k\in\mathbb{C}$
								we have
								\begin{align}
									\bar{\partial}M^{(2)}(k)=M^{(2)}(k)\bar{\partial}R^{(2)}(k),
								\end{align}
where for  $\xi \in[0,2)$  and   $\xi \in(-1/4,0)$
																	\begin{equation}
																		\bar{\partial}R^{(2)}(k,\xi)=\left\{\begin{array}{lll}
																			\left(\begin{array}{cc}
																				0 & \bar{\partial}R_{hj}(k,\xi)e^{-2it\theta}\\
																				0 & 0
																			\end{array}\right), & k\in \Omega_{hj},\ h=1,...,n(\xi),\ j=1,3;\\
																			\\
																			\left(\begin{array}{cc}
																				0& 0\\
																				\bar{\partial}R_{hj}(k,\xi)e^{2it\theta} & 0
																			\end{array}\right),  &k\in \Omega_{hj},\ h=1,...,n(\xi),\ j=2,4;\\
																			\\
																			0,  &elsewhere;\\
																		\end{array}\right.\label{DbarR(2)1}
																	\end{equation}
																	And the jump condition become
																	\begin{equation}
																		V^{(2)}(k)=\left\{
																		\begin{array}{ll}
																			R^{(2)}(k)^{-1}|_{\Sigma_{h2}\cup\Sigma_{h4}}&\text{as } 	k\in\Sigma_{h2}\cup\Sigma_{h4};\\[12pt]
																			R^{(2)}(k)|_{\Sigma_{h1}\cup\Sigma_{h3}}&\text{as } 	k\in\Sigma_{h1}\cup\Sigma_{h3};\\[12pt]
																			R^{(2)}(k)|_{\Sigma_{h\ (3\pm1)/2}}R^{(2)}(k)^{-1}|_{\Sigma_{(h-1)\ (3\pm1)/2}}&\text{as } 	k\in\Sigma_{h\pm}',\ h \text{ is even} ;\\[12pt]
																			R^{(2)}(k)|_{\Sigma_{h\ (7\pm1)/2}}R^{(2)}(k)^{-1}|_{\Sigma_{(h-1)\ (7\pm1)/2}}&\text{as } 	k\in\Sigma_{h\pm}',\ h \text{ is odd} ;\\[12pt]
																		\end{array}\right. \label{jumpv21}
																	\end{equation}
								
								$\blacktriangleright$ Residue conditions: $M^{(2)}$(k) has the same residue conditions with the RHP \ref{rhp10}.
								
							\end{RHP}

																	The analysis of $M^{(2)}$  is similar as Subsection \ref{subsec3.3}. But $M^{(R)}$ has more differences.
																	Compared with $\xi \in(2,+\infty)\cup(-\infty,-1/4)$,  it can be found that its jump matrix $V^{(2)}$ has additional portion on $\Sigma_{hj}$ and $\Sigma_{h\pm}$ in the case of $\xi \in(-1/4,2)$.
																	So this case is  more difficult  to be dealt  with.   And denote $ U(n(\xi))$ as the union set of neighborhood of $\xi_h$ for $h=1,...,n(\xi)$
																	\begin{equation}
																		U(n(\xi))=\underset{h=1,...,n(\xi)}{\cup}U_{\xi_h},\ U_{\xi_h}= \left\lbrace k:|k-\xi_h|\leq \delta_1 \right\rbrace, \nonumber
																	\end{equation}
																	where $ n(\xi)=2, 4$ correspond to  two cases  $\xi \in(0,2)$  and $\xi \in(-1/4,0)$ respectively.
																Here, the $\delta_1$ is chosen so that the small circumferences do not intersect each other and do not touch the imaginary axis.  Then    jump matrix $V^{(2)}(z)$  outside of $ U(n(\xi))$  has the following  estimates.
																	\begin{Proposition}\label{prov2}
																		For $1\leq p\leq+\infty$, there exist a positive constant $K_p$ relied on $p$ satisfies that the jump matrix $V^{(2)}(k)$ defined in (\ref{jumpv21}) admits
																		\begin{align}
																			\parallel V^{(2)}(k)-I\parallel_{L^p(\Sigma_{hj}\setminus U_{\xi_h} )}= \mathcal{O}( e^{-K_pt}), \ \  t\to\infty,
																		\end{align}
																		for $h=1,...,n(\xi)$ and $j=1,...,4$.
																		And   there also exist a positive constant $K_p'$ relied on $p$ satisfies that the jump matrix $V^{(2)}$  admits
																		\begin{align}
																			\parallel V^{(2)}(k)-I\parallel_{L^p(\Sigma_{h\pm}')}= \mathcal{O}( e^{-K_p't}),   \ \  t\to\infty,
																		\end{align}
																		for $h=1,...,n(\xi)$.
																	\end{Proposition}
																	
																	This proposition means that the jump matrix $V^{(2)}(k)$    uniformly goes to  $I$  on     $\tilde{\Sigma}\setminus U(n(\xi))$.
																	So outside the $U(n(\xi))$ there is only exponentially small error (in $t$) by completely ignoring the jump condition of  $M^{(R)}(k)$.
																	And this proposition enlightens  us to construct the solution $M_{R}(k)$ as follows:
																	\begin{equation}
																		M_{R}(k)=\left\{\begin{array}{ll}
																			E(k,\xi)M^{err}(k) M^{\Lambda}(k):=E(k,\xi)M^{(r)}(k)& k\notin U(n(\xi))\\
																			E(k,\xi)M^{err}(k) M^{\Lambda}(k)M^{lo}(k):=E(k,\xi)M^{(r)}(k)M^{lo}(k)  &k\in U(n(\xi))\\
																		\end{array}\right..\label{transm4}
																	\end{equation}

																	\subsection{Local model near phase points}
																	We first analyze $M^{lo}$. Denote a new contour $\Sigma^{(0)}= (\underset{h=1,2,3,4}{\underset{j=1,..,n(\xi),}{\cup}}\Sigma_{jh} )\cap U(n(\xi))$  shown  in the  Figure \ref{sigma0}.
																	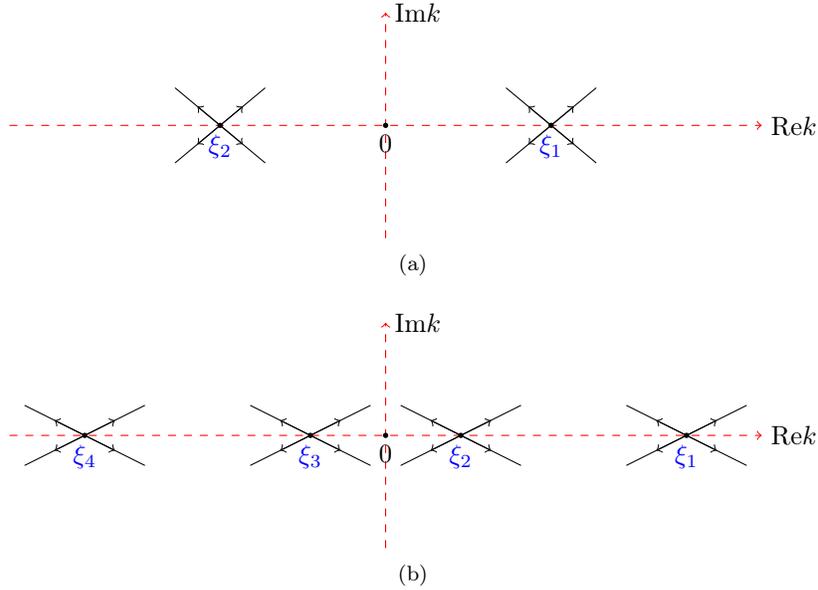
\begin{figure}[h]
																		\begin{center}
																			\subfigure[]{
																				\begin{tikzpicture}
																					\draw[->,dashed,red](-5,0)--(5,0)node[right]{\textcolor{black}{ Re$k$}};
																					\draw[->,dashed,red](0,-1.5)--(0,1.5)node[right]{\textcolor{black}{Im$k$}};
																					\coordinate (I) at (0,0);
																					\fill (I) circle (1pt) node[below] {$0$};
																					\draw(-2.2,0)--(-1.6,0.5);
																					\draw[->](-2.2,0)--(-2.5,0.25);
																					\draw(-2.2,0)--(-1.6,-0.5);
																					\draw[->](-2.2,0)--(-1.9,-0.25);
																					\draw(-2.2,0)--(-2.8,0.5);
																					\draw[->](-2.2,0)--(-1.9,0.25);
																					\draw(-2.2,0)--(-2.8,-0.5);
																					\draw[->](-2.2,0)--(-2.5,-0.25);
																					\draw(2.2,0)--(1.6,0.5);
																					\draw[->](2.2,0)--(2.5,0.25);
																					\draw(2.2,0)--(1.6,-0.5);
																					\draw[->](2.2,0)--(1.9,-0.25);
																					\draw(2.2,0)--(2.8,0.5);
																					\draw[->](2.2,0)--(1.9,0.25);
																					\draw(2.2,0)--(2.8,-0.5);
																					\draw[->](2.2,0)--(2.5,-0.25);
																					\coordinate (A) at (-2.2,0);
																					\fill (A) circle (1pt) node[blue,below] {$\xi_2$};
																					\coordinate (b) at (2.2,0);
																					\fill (b) circle (1pt) node[blue,below] {$\xi_1$};
																				\end{tikzpicture}
																				\label{si1}}
																			\subfigure[]{
																				\begin{tikzpicture}
																					\draw(-4,0)--(-4.8,0.4);
																					\draw[->](-4,0)--(-4.4,0.2);
																					\draw(-4,0)--(-3.2,0.4);
																					\draw[->](-4,0)--(-3.6,-0.2);
																					\draw(-4,0)--(-4.8,-0.4);
																					\draw[->](-4,0)--(-3.6,0.2);
																					\draw(-4,0)--(-3.2,-0.4);
																					\draw[->](-4,0)--(-4.4,-0.2);
																					\draw(-1,0)--(-0.2,0.4);
																					\draw[->](-1,0)--(-0.6,0.2);
																					\draw(-1,0)--(-1.8,0.4);
																					\draw[->](-1,0)--(-1.4,-0.2);
																					\draw(-1,0)--(-0.2,-0.4);
																					\draw[->](-1,0)--(-1.4,0.2);
																					\draw(-1,0)--(-1.8,-0.4);
																					\draw[->](-1,0)--(-0.6,-0.2);
																					\draw[->,dashed,red](-5,0)--(5,0)node[right]{ \textcolor{black}{Re$k$}};
																					\draw[->,dashed,red](0,-1.5)--(0,1.5)node[right]{\textcolor{black}{Im$k$}};
																					\draw(1,0)--(0.2,0.4);
																					\draw[->](1,0)--(0.6,0.2);
																					\draw(1,0)--(0.2,-0.4);
																					\draw[->](1,0)--(1.4,-0.2);
																					\draw(1,0)--(1.8,0.4);
																					\draw[->](1,0)--(1.4,0.2);
																					\draw(1,0)--(1.8,-0.4);
																					\draw[->](1,0)--(0.6,-0.2);
																					\draw(4,0)--(4.8,0.4);
																					\draw[->](4,0)--(4.4,0.2);
																					\draw(4,0)--(3.2,0.4);
																					\draw[->](4,0)--(3.6,-0.2);
																					\draw(4,0)--(4.8,-0.4);
																					\draw[->](4,0)--(3.6,0.2);
																					\draw(4,0)--(3.2,-0.4);
																					\draw[->](4,0)--(4.4,-0.2);
																					\coordinate (I) at (0,0);
																					\fill (I) circle (1pt) node[below] {$0$};
																					\coordinate (A) at (-4,0);
																					\fill (A) circle (1pt) node[blue,below] {$\xi_4$};
																					\coordinate (b) at (-1,0);
																					\fill (b) circle (1pt) node[blue,below] {$\xi_3$};
																					\coordinate (e) at (4,0);
																					\fill (e) circle (1pt) node[blue,below] {$\xi_1$};
																					\coordinate (f) at (1,0);
																					\fill (f) circle (1pt) node[blue,below] {$\xi_2$};
																					\coordinate (c) at (-2,0);
																				\end{tikzpicture}
																				\label{si2}}
																			\caption{  Jump contours $\Sigma^{(0)}$  of  $ M^{lo}(k)$.  The figures (a) and (b)   are corresponding to the   cases    $0\leq \xi<2$  and   $-1/4<\xi<0$ respectively.}
																			\label{sigma0}
																		\end{center}
																	\end{figure}
																	Consider the following RH problem:
																	\begin{RHP}\label{rhp11}
																		Find a matrix-valued function  $ M^{lo}(k)$ with following properties:
																		
																		$\blacktriangleright$ Analyticity: $M^{lo}(k)$ is analytical  in $\mathbb{C}\setminus \Sigma^{(0)} $;
																		
																		$\blacktriangleright$ Jump condition: $M^{lo}(k)$ has continuous boundary values $M^{lo}_\pm(k)$ on $\Sigma^{(0)}$ and
																		\begin{equation}
																			M^{lo}_+(k)=M^{lo}_-(k)V^{(2)}(k),\hspace{0.5cm}k \in \Sigma^{(0)};\label{jump6}
																		\end{equation}
																		
																		$\blacktriangleright$ Asymptotic behaviors:
																		\begin{align}
																			M^{lo}(k) =& I+\mathcal{O}(k^{-1}),\hspace{0.5cm}k \rightarrow \infty;
																		\end{align}
																	\end{RHP}	
																	This RH problem  only has jump conditions  without   poles. The  matrix $V^{(2)}(k)$ is a  upper/lower matrix with 1 on the  diagonal. For $h=1,...,n(\xi)$, we denote
																	\begin{align}
																		w_{hj}(k)=\left\{\begin{array}{lll}
																			\left(\begin{array}{cc}
																				0 & -R_{hj}(k,\xi)e^{-2it\theta}\\
																				0 & 0
																			\end{array}\right), &k\in \Sigma_{hj},j=1,3,\\[10pt]
																			\left(\begin{array}{cc}
																				0 & 0\\
																				R_{hj}(k,\xi)e^{2it\theta} & 0
																			\end{array}\right),  &k\in \Sigma_{hj},j=2,4.
																		\end{array}\right.
																	\end{align}
																	Then $V^{(2)}(k)=I\pm w_{hj}(k)$ for $k\in \Sigma_{hj}$. Here, it takes $"-"$ when $j$ is an even number. Besides, let
																	\begin{align}
																		&\Sigma^{(0)}_h=\bigcup_{j=1,...,4}\Sigma_{hj}, \ \ w_h(k)=\sum_{j=1,...,4} w_{hj}(k),\\
																		&w_{hj}^\pm(k)=w_{hj}(k)|_{\mathbb{C}^\pm}, \ \   w_h^\pm(k)=w_h(k)|_{\mathbb{C}^\pm}, \ \ w^\pm(k)=w(k)|_{\mathbb{C}^\pm}.
																	\end{align}
																	Recall the Cauchy projection operator $C_\pm$    on $\Sigma^{(2)}$
																	\begin{equation}
																		C_{\pm}(f)(s)=\lim_{z\to \Sigma^{(2)}_\pm}\frac{1}{2\pi i}\int_{\Sigma^{(2)}}\dfrac{f(s)}{s-k}ds,
																	\end{equation}
																	by which, we further define  operator
																	\begin{align}
																		C_w(f)=C_+(fw^-)+C_-(fw^+),\hspace{0.5cm}C_{w_h}(f)=C_+(fw_h^-)+C_-(fw_h^+).
																	\end{align}
																	Then $C_w=\sum_{h=1}^{n(\xi)}C_{w_h}$.
By simple calculation, we have 																	
\begin{lemma}
																		The matrix functions $w_{hj}$ defined above admits the following  estimation
																		\begin{align}
																			\parallel w_{hj}\parallel_{L^p(\Sigma_{hj})}=\mathcal{O}(t^{-1/p}),\ 1\leq p<+\infty.
																		\end{align}
																	\end{lemma}
																
This lemma implies that  the  RHP \ref{rhp11} admits a unique solution, which  can be written as
																	\begin{align}
																		M^{lo}(k) =I+\frac{1}{2\pi i}\int_{\Sigma^{(0)}}\frac{(I-C_w)^{-1}I\ w}{s-k}ds.
																	\end{align}
																	As shown in \cite{YF3}, the  contributions of every crosses $\Sigma^{(0)}_h$ can be separated out. So, as $t\to +\infty$, we consider to reduce the  RHP \ref{rhp11} to a model RH problem  whose solution can be given explicitly in terms of parabolic cylinder
																	functions on every contour $\Sigma^{(0)}_h$ respectively. And we only take $\Sigma^{(0)}_1$ as an example, the model near other  critical points can be  constructed similar. We denote $\hat{\Sigma}^{(0)}_1$ as the contour $\{k=\xi_1+le^{\pm\varphi i},\ l\in\mathbb{R}\}$ oriented from $\Sigma^{(0)}_1$, and $\hat{\Sigma}_{1j}$ is the extension of $\Sigma_{1j}$  respectively. And for $k$ near $\xi_1$, rewrite phase function as
																	\begin{align}
																		\theta(k)=\theta(\xi_1)+(k-\xi_1)^2\theta''(\xi_1)+\mathcal{O}((k-\xi_1)^3),
																	\end{align}
where $\theta''(\xi_1)>0$ for $\xi\in[0,2)$  and $\theta''(\xi_1)<0$ for $\xi\in(-1/4,0)$.
																	
																	In order to motivate the model, let $\zeta = \zeta(k)$ denote the rescaled local variable
																	\begin{align}
																		\zeta(k)=t^{1/2}\sqrt{4\eta(\xi,\xi_1)\theta''(\xi_1)}(k-\xi_1),
																	\end{align}
																where  $\eta(\xi,\xi_1)$ is defined in (\ref{eta}). This change of variable maps $U_{\xi_1}$ to an expanding neighborhood of $\zeta= 0$. Additionally, let
																	\begin{align}
																		r_{\xi_1}=-r(\xi_1)T_1(\xi)^{2}e^{2it\theta(\xi_1)}\eta(\xi,\xi_1)^{2i\nu(\xi_1)}\exp\left\lbrace -i\eta(\xi,\xi_1)\nu(\xi_1)\log \left( 4t\theta''(\xi_1)\eta(\xi,\xi_1)\right) \right\rbrace ,
																	\end{align}
																	with $|r_{\xi_1}|=|r(\xi_1)|$.
																	In the above expression, the complex powers are defined by choosing the branch of
																	the logarithm with  $-\pi< \arg \zeta < \pi$ in the cases $\xi\in[0,2)$, and the branch of the logarithm with $0 < \arg \zeta < 2\pi$ in the case $\xi\in(-1/4,0)$. Then Theorem A.1-A.6 in \cite{HG2009} proved that
																	\begin{align}
																		M^{lo,1}(k)=I+\frac{t^{-1/2}}{k-\xi_1} \left(\begin{array}{cc}
																			0 & \tilde{\beta}^1_{12}\\
																			\tilde{\beta}^1_{21} & 0
																		\end{array}\right)+\mathcal{O}(t^{-1}),\label{asyMpc}
																	\end{align}
																	where $M^{lo,1}(k)$ is the solution of the model RHP near the phase point $\xi_1$.

																	For the model around other stationary phase points, it  also admits
																	\begin{align}
																		M^{lo,h}(k)=I+\frac{t^{-1/2}}{k-\xi_h} \left(\begin{array}{cc}
																			0 & \tilde{\beta}^h_{12}\\
																			\tilde{\beta}^h_{21} & 0
																		\end{array}\right)+\mathcal{O}(t^{-1}),\label{asyMpck}
																	\end{align}
																	for $h=2,...,n(\xi)$.
																	Either $\xi\in(-1/4,0)$, $h$ is  odd number or $\xi\in[0,2)$, $h$ is  even number,
																	\begin{align}
																		r_{\xi_h}=-r(\xi_h)T_h(\xi)^{2}e^{2it\theta(\xi_h)}\exp\left\lbrace -i\nu(\xi_h)\log \left( 4t\theta''(\xi_h)\right) \right\rbrace,
																	\end{align}
																	and
																	\begin{align}
																		&\tilde{\beta}^h_{21}=\frac{\sqrt{2\pi}e^{\frac{5}{2}\pi\nu(\xi_h)}e^{-\frac{7\pi}{4} i}}{r_{\xi_h}\Gamma(-i\nu(\xi_h))},\hspace{0.5cm}\tilde{\beta}^h_{21}\tilde{\beta}^h_{12}=-\nu(\xi_h),\nonumber\\
																		&|\tilde{\beta}^h_{21}|=-\frac{\nu(\xi_h)}{(1-|r(\xi_h)|^2)^3},\nonumber\\
																		&\arg(\tilde{\beta}^h_{21})=\frac{5}{2}\pi\nu(\xi_h)-\frac{7\pi}{4} i-\arg r_{\xi_h}-\arg \Gamma(-i\nu(\xi_h))	.\nonumber
																	\end{align}
																	Either $\xi\in(-1/4,0)$, $h$ is  even number or $\xi\in[0,2)$, $h$ is  odd number,
																	\begin{align}
																		r_{\xi_h}=-r(\xi_h)T_h(\xi)^{2}e^{2it\theta(\xi_h)}\exp\left\lbrace -i\nu(\xi_h)\log \left(- 4t\theta''(\xi_h)\right) \right\rbrace,
																	\end{align}
																	and
																	\begin{align}
																		&\tilde{\beta}^h_{21}=\frac{\sqrt{2\pi}e^{\frac{\pi}{2}\nu(\xi_h)}e^{-\frac{\pi}{4} i}}{r_{\xi_h}\Gamma(i\nu(\xi_h))},\hspace{0.5cm}\tilde{\beta}^h_{21}\tilde{\beta}^h_{12}=-\nu(\xi_h),\nonumber\\
																		&|\tilde{\beta}^h_{21}|=-\frac{\nu(\xi_h)}{1-|r(\xi_h)|^2},\nonumber\\
																		&\arg(\tilde{\beta}^h_{21})=\frac{\pi}{2}\nu(\xi_h)-\frac{\pi}{4} i-\arg r_{\xi_h}-\arg \Gamma(i\nu(\xi_h)).\nonumber
																	\end{align}
																	We finally obtain
																	\begin{Proposition}\label{asymlo}
																		As $t\to+\infty$,
																		\begin{align}
																			M^{lo}(k)=I+t^{-1/2}\sum_{ h=1 }^{n(\xi)}\frac{A_h(\xi)}{k-\xi_h} +\mathcal{O}(t^{-1}),
																		\end{align}
																		where
																		\begin{align}
																			A_h(\xi)=\left(\begin{array}{cc}
																				0 & \tilde{\beta}^h_{12}\\
																				\tilde{\beta}^h_{21} & 0
																			\end{array}\right).
																		\end{align}
																	\end{Proposition}
																	\subsection{Error estimate through small norm RH problem}
																	The last step is to consider the following   RH   problem  for the matrix function  $E(k;\xi)$.
																	\begin{RHP}\label{e12}
																		Find a matrix-valued function $E(k;\xi)$  admitting following properties:
																		
																		$\blacktriangleright$ Analyticity: $E(k;\xi)$ is analytical  in $\mathbb{C}\setminus  \Sigma^{(E)} $, where
																		$$\Sigma^{(E)}= \partial U(n(\xi))\cup
																		(\Sigma^{(2)}\setminus U(n(\xi));$$

																		$\blacktriangleright$ Asymptotic behaviors:
																		\begin{align}
																			&E(k;\xi) \sim I+\mathcal{O}(k^{-1}),\hspace{0.5cm}|k| \rightarrow \infty;
																		\end{align}

																		$\blacktriangleright$ Jump condition: $E(k;\xi)$ has continuous boundary values $E_\pm(k;\xi)$ on $\Sigma^{(E)}$ satisfying
																		$$E_+(k;\xi)=E_-(k;\xi)V^{(E)}(k),$$
																		where the jump matrix $V^{(E)}(k)$ is given by
																		\begin{equation}
																			V^{(E)}(k)=\left\{\begin{array}{llll}
																				M^{(r)}(k)V^{(2)}(k)M^{(r)}(k)^{-1}, & k\in \Sigma^{(2)}\setminus U(n(\xi)),\\[4pt]
																				M^{(r)}(k)M^{lo}(k)M^{(r)}(k)^{-1},  & k\in \partial U(n(\xi)),
																			\end{array}\right. \label{deVE}
																		\end{equation}
																		which is  shown in  Figure \ref{figE}.
																	\end{RHP}

																	\begin{figure}[H]
																		\centering
																		\subfigure[]{
																			\begin{tikzpicture}
																				\draw[dashed](-5,0)--(5,0)node[right]{ Re$k$};
																				\draw[dashed](0,-1)--(0,1)node[right]{ Im$k$};
																				\coordinate (I) at (0,0);
																				\fill (I) circle (1pt) node[below] {$0$};
																				\coordinate (e) at (2,0);
																				\fill (e) circle (1pt) node[below] {$\xi_1$};
																				\coordinate (f) at (-2,0);
																				\draw[thick,red](-2,0) circle (0.4);
																				\fill (f) circle (1pt) node[below] {$\xi_2$};
																				\draw[thick,red](2,0) circle (0.4);
																				\draw(-1.629,0.149)--(0,0.8);
																				\draw[->](-1.629,0.149)--(-1,0.4);
																				\draw(1.629,0.149)--(0,0.8);
																				\draw[->](1.629,0.149)--(1,0.4);
																				\draw(-1.629,-0.149)--(0,-0.8);
																				\draw[->](-1.629,-0.149)--(-1,-0.4);
																				\draw(1.629,-0.149)--(0,-0.8);
																				\draw[->](1.629,-0.149)--(1,-0.4);
																				\draw(0,0.8)--(0,-0.8);
																				\draw[->](0,0)--(0,0.8);
																				\draw[->](0,0)--(0,-0.8);
																				\draw(-2.371,0.149)--(-4,0.8);
																				\draw[->](-2.371,0.149)--(-3,0.4);
																				\draw(-2.371,-0.149)--(-4,-0.8);
																				\draw[->](-2.371,-0.149)--(-3,-0.4);
																				\draw(2.371,0.149)--(4,0.8);
																				\draw[->](2.371,0.149)--(3,0.4);
																				\draw(2.371,-0.149)--(4,-0.8);
																				\draw[->](2.371,-0.149)--(3,-0.4);
																			\end{tikzpicture}
																		}
																		\subfigure[]{
																			\begin{tikzpicture}
																				\draw(4.35,0.18)--(5,0.5);
																				\draw[->](4.35,0.18)--(4.5,0.25);
																				\draw(3.64,0.15)--(2.5,0.6);
																				\draw[-<](3.64,-0.15)--(3.25,-0.3);
																				\draw(4.35,-0.18)--(5,-0.5);
																				\draw[-<](3.64,0.15)--(3.25,0.3);
																				\draw(3.64,-0.15)--(2.5,-0.6);
																				\draw[->](4.35,-0.18)--(4.5,-0.25);
																				\draw(-4.35,0.18)--(-5,0.5);
																				\draw[-<](-4.35,0.18)--(-4.5,0.25);
																				\draw(-3.64,0.15)--(-2.5,0.6);
																				\draw[->](-3.64,-0.15)--(-3.25,-0.3);
																				\draw(-4.35,-0.18)--(-5,-0.5);
																				\draw[->](-3.64,0.15)--(-3.25,0.3);
																				\draw(-3.64,-0.15)--(-2.5,-0.6);
																				\draw[-<](-4.35,-0.18)--(-4.5,-0.25);
																				\draw(-0.64,0.15)--(0,0.5);
																				\draw[->](-0.64,0.15)--(-0.4,0.28);
																				\draw(-1.35,0.16)--(-2.5,0.6);
																				\draw[-<](-1.35,-0.16)--(-1.75,-0.31);
																				\draw(-0.64,-0.15)--(0,-0.5);
																				\draw[-<](-1.35,0.16)--(-1.75,0.31);
																				\draw(-1.35,-0.16)--(-2.5,-0.6);
																				\draw[->](-0.64,-0.15)--(-0.4,-0.28);
																				\draw[dashed](-5,0)--(5,0)node[right]{ Re$k$};
																				\draw(0.64,0.15)--(0,0.5);
																				\draw[-<](0.64,0.15)--(0.4,0.28);
																				\draw(1.35,0.16)--(2.5,0.6);
																				\draw[->](1.35,-0.16)--(1.75,-0.31);
																				\draw(0.64,-0.15)--(0,-0.5);
																				\draw[->](1.35,0.16)--(1.75,0.31);
																				\draw(1.35,-0.16)--(2.5,-0.6);
																				\draw[-<](0.64,-0.15)--(0.4,-0.28);
																				\draw[->](2.5,0)--(2.5,0.6);
																				\draw[->](2.5,0)--(2.5,-0.6);
																				\draw[->](-2.5,0)--(-2.5,0.6);
																				\draw[->](-2.5,0)--(-2.5,-0.6);
																				\draw[->](0,0)--(0,0.5);
																				\draw[->](0,0)--(0,-0.5);
																				\coordinate (I) at (0,0);
																				\fill (I) circle (1pt) node[below] {$0$};
																				\coordinate (A) at (-4,0);
																				\fill (A) circle (1pt) node[below] {$\xi_4$};
																				\coordinate (b) at (-1,0);
																				\fill (b) circle (1pt) node[below] {$\xi_3$};
																				\coordinate (e) at (4,0);
																				\fill (e) circle (1pt) node[below] {$\xi_1$};
																				\coordinate (f) at (1,0);
																				\draw[thick,red](1,0) circle (0.4);
																				\fill (f) circle (1pt) node[below] {$\xi_2$};
																				\draw[thick,red](4,0) circle (0.4);
																				\draw[thick,red](-1,0) circle (0.4);
																				\draw[thick,red](-4,0) circle (0.4);
																			\end{tikzpicture}
																		}
																		\caption{  The jump contour $\Sigma^{(E)}$ for the $E(k;\xi)$. The red circles are $U(n(\xi))$. }
																		\label{figE}
																	\end{figure}
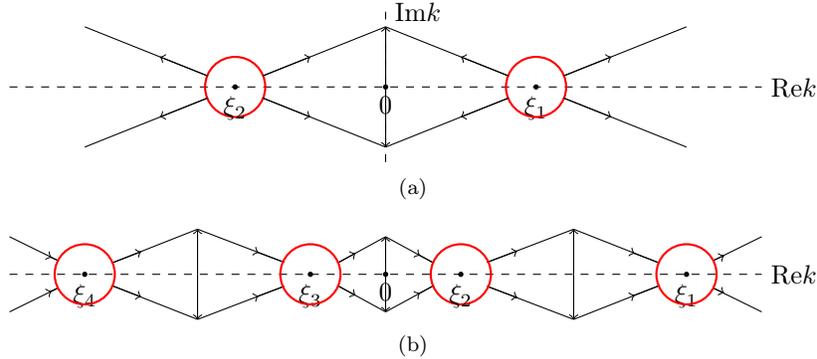
																	
																	By using   {Proposition \ref{prov2}}, we have the following estimates
																	\begin{equation}
																		\parallel V^{(E)}(k)-I\parallel_p\lesssim\left\{\begin{array}{llll}
																			\exp\left\{-tK_p\right\},  & k\in \Sigma_{hj}\setminus U(n(\xi)),\\[6pt]
																			\exp\left\{-tK_p'\right\},   & k\in \Sigma'_{h\pm}.
																		\end{array}\right. \label{VE-I}
																	\end{equation}
																	
																	For $k\in \partial U(n(\xi))$,  $M^{(r)}(k)$ is bounded,  so via Proposition \ref{asymlo},  we find that
																	\begin{equation}
																		| V^{(E)}(k)-I|=   \big|M^{(r)}(k)(M^{lo}(k)-I)M^{(r)}(k)^{-1} \big| = \mathcal{O}(t^{-1/2}).\label{VE}
																	\end{equation}
																	Therefore,    the   existence and uniqueness  of  the RHP \ref{e12} is  shown  by using  a  small-norm RH problem \cite{RN9,RN10}.  Moreover, according to Beals-Coifman theory,
																	the solution of  the RHP \ref{e12}  can be given by
																	\begin{equation}
																		E(k;\xi)=I+\frac{1}{2\pi i}\int_{\Sigma^{(E)}}\dfrac{\left( I+\varpi(s)\right) (V^{(E)}(s)-I)}{s-k}ds,\label{Ez}
																	\end{equation}
																	where the $\varpi\in L^\infty(\Sigma^{(E)})$ is the unique solution of the following equation
																	\begin{equation}
																		(1-C_E)\varpi=C_E\left(I \right).
																	\end{equation}
																	And  $C_E$ is  an integral operator:$L^\infty(\Sigma^{(E)})\to L^2(\Sigma^{(E)})$ defined by
																	\begin{equation}
																		C_E(f)(k)=C_-\left( f(V^{(E)}(k) -I)\right) ,
																	\end{equation}
																	where  $C_-$ is the usual Cauchy projection operator on $\Sigma^{(E)}$.
																	
																	By  (\ref{VE}),   we have
																	\begin{equation}
																		\parallel C_E\parallel\leq\parallel C_-\parallel \parallel V^{(E)}(k)-I\parallel_2 \lesssim \mathcal{O}(t^{-1/2}),
																	\end{equation}
																	which implies that  $1-C_E$ is invertible  for   sufficiently large $t$.    So  $\varpi$  exists and is unique.
																	Besides,
																	\begin{equation}
																		\parallel \varpi\parallel_{L^\infty(\Sigma^{(E)})}\lesssim\dfrac{\parallel C_E\parallel}{1-\parallel C_E\parallel}\lesssim t^{-1/2}.\label{normrho}
																	\end{equation}
																	In order to reconstruct the solution $u(y,t)$ of (\ref{ch}), we need the asymptotic behavior of $E(k;\xi)$ as $k\to\frac{i}{2}$ and the long time asymptotic behavior of $E(\frac{i}{2})$. When we estimate its  asymptotic behavior, from (\ref{Ez}) and (\ref{VE-I}) we only need to consider the calculation on $\partial U(n(\xi))$ because it  approach zero exponentially on other boundary.
																	\begin{Proposition}\label{asyE}
																		As $k\to \frac{i}{2}$, it follows that
																		\begin{align}
																			E(k;\xi)=E(\frac{i}{2})+E_1(k-\frac{i}{2})+\mathcal{O}((k-\frac{i}{2})^2),
																		\end{align}
																		where
																		\begin{align}
&E(\frac{i}{2})=I+t^{-1/2}H^{(0)}+\mathcal{O}(t^{-1}),\label{E0t}\\
&				H^{(0)}=\sum_{ h=1 }^{n(\xi)}\frac{-1}{\xi_h-\frac{i}{2}} M^{(r)}(\xi_h)A_h(\xi)M^{(r)}(\xi_h)^{-1}.\nonumber
																		\end{align}
 And
																		\begin{equation}
																			E_1=\frac{1}{2\pi i}\int_{\Sigma^{(E)}}\dfrac{\left( I+\varpi(s)\right) (V^{(E)}-I)}{(k-\frac{i}{2})^2}ds,
																		\end{equation}
which admits  long time asymptotic behavior
																		\begin{equation}
																			E_1=t^{-1/2}H^{(1)}+\mathcal{O}(t^{-1}),\label{E1t}
																		\end{equation}
where
																		\begin{align}
																			H^{(1)}=\sum_{ h=1 }^{n(\xi)}\frac{-1}{(\xi_h-\frac{i}{2})^2} M^{(r)}(\xi_h)A_h(\xi)M^{(r)}(\xi_h)^{-1}.
																		\end{align}
																	\end{Proposition}
																	\subsection{$\bar{\partial}$-problem analysis}
																	Although in case $-1/4<\xi<2$, $M^{(3)}$ admits same expression
																	in RHP \ref{RHP9}, the analysis region and the function $R^{(2)}$ are different. Similar with the process in Subsection \ref{subsec3.3}, we first give the estimation of Im$\theta$:
																	\begin{lemma}\label{theta2}
																		There exist a constant $c(\xi)>0$ relied on $\xi \in(-1/4, 2)$ that the imaginary part of phase function (\ref{Reitheta}) $\text{\rm Im }\theta(k)$ have the following estimation for $i=1,...,n(\xi)$:
																		\begin{align}
																			&\text{\rm Im }\theta(k)\geq c(\xi)\text{\rm Im} k\frac{\text{Re}^2k-\xi_i^2}{1+4\text{Re}^2k} ,\hspace{0.5cm} \text{as }k\in\Omega_{i1}, \Omega_{i3};\\
																			&\text{\rm Im }\theta(k)\leq -c(\xi)\text{\rm Im} k\frac{\text{Re}^2k-\xi_i^2}{1+4\text{Re}^2k}  ,\hspace{0.5cm} \text{as }k\in\Omega_{i2}, \Omega_{i4}.
																		\end{align}	
																	\end{lemma}
																	\begin{proof}
																		Take $i=1$ as an example. We consider the region $\Omega_{11}$. Let $k=u+\xi_1+vi$, then
																		\begin{align*}
																			{\rm Im} \theta(k)=v\left( \xi+2\frac{4|k|^2-1}{|1+4k^2|^2}\right),
																		\end{align*}
				in which  $\frac{4|k|^2-1}{|1+4k^2|^2}$ is a monotone creasing function of $v$ for given $u$, so
																		\begin{align*}
																			{\rm Im} \theta(k)\geq& v\left( \xi+2\frac{4(u+\xi_1)^2-1}{(1+4(u+\xi_1)^2)^2}\right) \\
																			&=v\left\lbrace 8\left[ (u+\xi_1)^2-\xi_1^2\right] \frac{3-16\xi_1^2(u+\xi_1)^2+4(\xi_1^2+(u+\xi_1)^2)}{\left[1+4(u+\xi_1)^2 \right]^2(1+4\xi_1^2)^2}\right\rbrace .
																		\end{align*}
																		The last equality is from $\xi=2\frac{1-4\xi_1^2}{(1+4\xi_1^2)^2}$. And In the product above, the last item has nonzero upper and lower bound for $u\geq0$. Therefore,
																		\begin{align*}
																			{\rm Im} \theta(k)\gtrsim v\frac{(u+\xi_1)^2-\xi_1^2}{1+4(u+\xi_1)^2}.
																		\end{align*}
																	\end{proof}
																	\begin{Proposition}
																		The Cauchy integral operator $ J: L^{\infty}(\mathbb{C}) \rightarrow L^{\infty}(\mathbb{C}) $. When $\xi\in(-1/4,2)$, for sufficiently large $t$, the operator $J$ is a small parametrization. And
																		\begin{equation}
																			|| J ||_{ L^{\infty}\rightarrow L^{\infty} }  \leqslant ct^{-\frac{1}{4}}.
																		\end{equation}
																		Therefore $(1-J)^{-1}$ exists, and thus the operator equation (\ref{opers}) has a solution. In addition, 	 the solution $M^{(3)}(k)$  of  $\bar{\partial}$-problem  admits the following estimation
																		\begin{align}
																			\parallel M^{(3)}(\frac{i}{2})-I\parallel=\parallel\frac{1}{\pi}\iint_\mathbb{C}\dfrac{M^{(3)}(s)W^{(3)} (s)}{s-\frac{i}{2}}dA(s)\parallel\lesssim t^{-3/4}.
																		\end{align}
																		As $k\to \frac{i}{2}$, $M^{(3)}(k)$ has asymptotic expansion
																		\begin{equation}
																			M^{(3)}(k)=M^{(3)}(\frac{i}{2})+M^{(3)}_1(x,t)(k-\frac{i}{2})+\mathcal{O}((k-\frac{i}{2})^{2}),
																		\end{equation}
																		where $M^{(3)}_1(x,t) $ is a $k$-independent coefficient with
																		\begin{equation}
																			M^{(3)}_1(x,t)=\frac{1}{\pi}\iint_C\dfrac{M^{(3)}(s)W^{(3)} (s)}{(s-\frac{i}{2})^2}dA(s),
																		\end{equation}
																		and $M^{(3)}_1(x,t)$  satisfies
																		\begin{equation}
																			|M^{(3)}_1(x,t)|\lesssim t^{-3/4}.
																		\end{equation}
																	\end{Proposition}
																	
																	\subsection{Long time asymptotic behaviors}

In the case II and case III, $T(k)=\delta(k)$. $M(k)$ can be expressed as
																	\begin{align}
																		M(k)=&  F(y,t,k)M^{(3)}(k)E(k,\xi)M^{err}(k)\delta^{-\sigma_3}(k,\xi),
																	\end{align}
											where
																	\begin{align}
																		&F(y,t,k)= \left(\begin{array}{cc}
																			F_{11}+F_{12}(k-\frac{i}{2})& F_{21}+F_{22}(k-\frac{i}{2})\\
																			F_{31}+F_{32}(k-\frac{i}{2})& F_{41}+F_{42}(k-\frac{i}{2})
																		\end{array}\right)+	\left(\begin{array}{cc}
																			G_{11}+G_{12}(k-\frac{i}{2})& G_{21}+G_{22}(k-\frac{i}{2})\\
																			G_{31}+G_{32}(k-\frac{i}{2})& G_{41}+G_{42}(k-\frac{i}{2})
																		\end{array}\right)t^{-\frac{1}{2}}\nonumber\\
																		&+\mathcal{O}((k-\frac{i}{2})^2)+\mathcal{O}(t^{-\frac{3}{4}}).\nonumber
																	\end{align}
																	In order to facilitate calculation, denote
																	\begin{align}
																		A(y,t,k)=\left(\begin{array}{cc}
																			F_{11}\delta_{0}^{-1}+\left(F_{11}\delta_2+F_{12}\delta_{0}^{-1}\right)(k-\frac{i}{2})& 	 F_{21}\delta_{0}+\left(F_{22}\delta_0+F_{21}\delta_{1}\right)(k-\frac{i}{2})\\
																			F_{31}\delta_{0}^{-1}+\left(F_{31}\delta_2+F_{32}\delta_{0}^{-1}\right)(k-\frac{i}{2})& 	 F_{41}\delta_{0}+\left(F_{42}\delta_0+F_{41}\delta_{1}\right)(k-\frac{i}{2})
																		\end{array}\right)+\mathcal{O}((k-\frac{i}{2})^2),\nonumber
																	\end{align}
																	where
																	\begin{align}
																		&\delta_0=e^{\frac{1}{2\pi i}\int_{L(\xi)}\frac{\log(1-|r|^2)}{s-\frac{i}{2}}ds};\delta_1=\frac{-\delta_0}{2\pi i}\int_{L(\xi)}\frac{\log(1-|r|^2)}{(s-\frac{i}{2})^2}ds;\delta_2=\frac{\delta_0^{-1}}{2\pi i}\int_{L(\xi)}\frac{\log(1-|r|^2)}{(s-\frac{i}{2})^2}ds;\nonumber
																	\end{align}
																	\begin{align}
																		B(y,t,k)=&\left(\begin{array}{cc}
																			(F_{11}H_{11}^{(0)}+F_{21}H_{21}^{(0)})\delta_{0}^{-1}+h_{11}(k-\frac{i}{2})& 	(F_{11}H_{12}^{(0)}+F_{21}H_{22}^{(0)})\delta_{0}+h_{12}(k-\frac{i}{2})\\
																			(F_{31}H_{11}^{(0)}+F_{41}H_{21}^{(0)})\delta_{0}^{-1}+h_{21}(k-\frac{i}{2})& 	(F_{31}H_{12}^{(0)}+F_{41}H_{22}^{(0)})\delta_{0}+h_{22}(k-\frac{i}{2})
																		\end{array}\right)\nonumber \\
																		+ &\left(\begin{array}{cc}
																			G_{11}\delta_{0}^{-1}+\left(G_{12}\delta_{0}^{-1}+G_{11}\delta_{2}\right)(k-\frac{i}{2})& 	 G_{21}\delta_{0}+\left(G_{22}\delta_{0}+G_{21}\delta_{1}\right)(k-\frac{i}{2})\\
																			G_{31}\delta_{0}^{-1}+\left(G_{32}\delta_{0}^{-1}+G_{31}\delta_{2}\right)(k-\frac{i}{2})& 	 G_{41}\delta_{0}+\left(G_{42}\delta_{0}+G_{41}\delta_{1}\right)(k-\frac{i}{2})
																		\end{array}\right)+\mathcal{O}((k-\frac{i}{2})^2),\nonumber
																	\end{align}
																	\begin{align}
																		h_{11}=&(F_{11}H_{11}^{(0)}+F_{21}H_{21}^{(0)})\delta_2+(F_{12}H_{11}^{(0)}+F_{11}H_{11}^{(1)}+F_{21}H_{21}^{(1)}+F_{22}H_{21}^{(0)})\delta_{0}^{-1};\nonumber \\
																		h_{21}=&(F_{31}H_{11}^{(0)}+F_{41}H_{21}^{(0)})\delta_2+(F_{32}H_{11}^{(0)}+F_{31}H_{11}^{(1)}+F_{41}H_{21}^{(1)}+F_{42}H_{21}^{(0)})\delta_{0}^{-1};\nonumber \\
																		h_{12}=&(F_{11}H_{12}^{(0)}+F_{21}H_{22}^{(0)})\delta_1+(F_{12}H_{12}^{(0)}+F_{11}H_{12}^{(1)}+F_{21}H_{22}^{(1)}+F_{22}H_{22}^{(0)})\delta_{0};\nonumber\\
																		h_{22}=&(F_{31}H_{12}^{(0)}+F_{41}H_{22}^{(0)})\delta_1+(F_{32}H_{12}^{(0)}+F_{31}H_{12}^{(1)}+F_{41}H_{22}^{(1)}+F_{42}H_{22}^{(0)})\delta_{0}.\nonumber
																	\end{align}
																	Then $M(k)$ is abbreviated to
																	\begin{align}
																		M(k)=A(y,t,k)+B(y,t,k)t^{-\frac{1}{2}}+\mathcal{O}(t^{-\frac{3}{4}}).
																	\end{align}
																	The reconstruction formula (\ref{newu}) recover the solution
\begin{align}
																		 u(x,t) =\left(\frac{a_1}{c_1}+\frac{a_2}{c_2}\right)+\left(\frac{b_1c_1-a_1d_1}{c_1^2}+\frac{b_2c_2-a_2d_2}{c_2^2}\right)t^{-\frac{1}{2}}+\mathcal{O}(t^{-\frac{3}{4}}),\label{pie}
																	\end{align}
																	where
																	\begin{align}
																		a_1=&(F_{11}+F_{31})\delta_2+(F_{12}+F_{32})\delta_0^{-1};\hspace{0.2cm}a_2=(F_{21}+F_{41})\delta_1+(F_{22}+F_{42})\delta_0;\nonumber \\
																		b_1=&h_{11}+h_{21}+(G_{12}+G_{32})\delta_0^{-1}+(G_{11}+G_{31})\delta_2;
																		b_2=h_{12}+h_{22}+(G_{22}+G_{42})\delta_0+(G_{21}+G_{41})\delta_1;\nonumber\\
																		c_1=&(F_{11}+F_{31})\delta_0^{-1};\hspace{0.2cm}c_2=(F_{21}+F_{41})\delta_0;\nonumber \\
																		d_1=&(F_{11}H_{11}^{(0)}+F_{21}H_{21}^{(0)}+F_{31}H_{11}^{(0)}+F_{41}H_{21}^{(0)}+G_{11}+G_{31})\delta_0^{-1};\nonumber \\
																		d_2=&(F_{11}H_{12}^{(0)}+F_{21}H_{22}^{(0)}+F_{31}H_{12}^{(0)}+F_{41}H_{22}^{(0)}+G_{21}+G_{41})\delta_0. \nonumber
																	\end{align}																	
Finally,  summing up above results  gives  the following  theorem
			\begin{theorem}\label{last1} Under the same condition with Theorem \ref{last},    the solution  of the Cauchy problem
(\ref{ch})-(\ref{initial})  admits the  asymptotic expansion (\ref{pie})
 in the region   $\xi\in(-1/4,2)$.

																	\end{theorem}

																	\section{Long-time asymtotics in transition regions}\label{sec5}
																	In this section, we still use the $\bar{\partial}$-methods to consider  the Painlesv\'e asymptotics for the solution  of the Cauchy problem
(\ref{ch})-(\ref{initial}) in two  transition regions.
																	\subsection{Decomposition of RH problem}
																	For $\xi=2$, we take the region  I:  $0<|\xi-2|t^{\frac{2}{3}}<C$   as an example to analyze  the Painlesv\'e asymptotics. And the region II:  $0<|\xi+\frac{1}{4}|t^{\frac{2}{3}}<C$  corresponds to the case  $\xi<-\frac{1}{4}$ can be analyzed by the same way.
																	
																	We  first change the RHP \ref{RHP2-2} into the RH problem for  $N(k)$ by  the following deformation																	 \begin{align}
																		N(k)=M(k)\tilde{a}(k)^{-\sigma_3}=\left\{ \begin{array}{ll}
																			\left(\frac{\Phi_{-}^{(1)}(k)}{a(k)},  \Phi_{+}^{(2)} (k)\right),   &\text{as } k\in \mathbb{C}^+,\\[12pt]
																			\left( \Phi_{+}^{(1)}(k),\frac{\Phi_{-}^{(2)}(k)}{\overline{a(\bar{k})}}\right)  , &\text{as }k\in \mathbb{C}^-,\\
																		\end{array}\right. \label{trans1}
																	\end{align}							
																	where  $\tilde{a}(k)=\left\{ \begin{array}{ll}
																		a(k),   &\text{as } k\in \mathbb{C}^+,\\[12pt]
																		\frac{1}{\overline{a(\bar{k})}}  , &\text{as }k\in \mathbb{C}^-.\\
																	\end{array}\right.$
																
																	 Further, in
																	   order to    eliminate singularity $k=1$  for   $N(k) $  further making
																transformation
																\begin{align} \left(I-\frac{1}{k-1}\sigma_1\right)N(k) =\left(I-\frac{1}{k-1}\sigma_1\left(N^J(y,t,1)\right)^{-1}\right)N^J(y,t,k),
																\end{align}
																then  $N^{J}(k)$ satisfies the following   RH problem.

																	\begin{RHP}\label{RHPp}
																		Find a matrix-valued function $	N^{J}(k)\triangleq N^{J}(k;y,t)$ which satisfies:
																		
																		$\blacktriangleright$ Analyticity: $N^{J}(k)$ is analytic in $\mathbb{C}\setminus \left(\mathbb{R} \cup \mathcal{Z}  \right)$;
																		
																		$\blacktriangleright$ Jump condition: $N^{J}(k)$ has continuous boundary values $N^{J}_\pm(k)$ on $\mathbb{R}$ and
																		\begin{align}
																			N^{J}_+(k)=N^{J}_-(k)V_N(k), \label{N}
																		\end{align}	
																		where
\begin{align}
																			 V_N(k)= \left(\begin{array}{cc}
																				1-|r_p(k)|^2	 & r_p(k)e^{-2it\theta}\\
																				-\overline{r_p(k)}e^{2it\theta}&  1
																			\end{array}\right), \nonumber
																		\end{align}	
and																		the reflection coefficients   by $r_p(k)=\frac{b(k)}{\overline{a(\bar{k})}}$.
																		
																		$\blacktriangleright$ Asymptotic behaviors:
																		\begin{align}
																			&N^{J}(k) = I+\mathcal{O}(k^{-1}),\hspace{0.5cm}k \rightarrow \infty,\label{jianjin}
																		\end{align}
																		
																		$\blacktriangleright$ Residue conditions: $N^{J}(k)$ has simple poles at each point in $ \mathcal{Z} $ with:
																		\begin{align}
																			&\res_{k=ik_n}N^{J}(k)=\lim_{k\to ik_n}N(k)\left(\begin{array}{cc}
																				0 & 0\\
																				c_{n,p}e^{2it\theta(ik_n)}& 0
																			\end{array}\right),\\
																			&\res_{k=-ik_n}N^{J}(k)=\lim_{k\to -ik_n}N(k)\left(\begin{array}{cc}
																				0 &c_{n,p}e^{2it\theta(ik_n)}\\
																				0& 0
																			\end{array}\right),
																		\end{align}
																		where $c_{n,p}=-\frac{b(-ik_n)}{a^{\prime}(ik_n)}$.
																	\end{RHP}
																	
																	Since $V_N(k)$ have two kinds  of decompositions
																	\begin{align}
																		V_N(k)&=\left(\begin{array}{cc}
																			1 &  r_p(k) e^{-2it\theta(k)} \\
																			0& 1
																		\end{array}\right)\left(\begin{array}{cc}
																			1 &0\\
																			-\overline{r_p(\bar{k}))} e^{2it\theta(k)} & 1
																		\end{array}\right) \label{V1}\\
																		&=\left(\begin{array}{cc}
																			1 &0\\
																			\frac{-\overline{r_p(\bar{k}))} e^{2it\theta(k)} }{1-|r_p|^2}& 1
																		\end{array}\right)(1-|r_p|^2)^{\sigma_3}\left(\begin{array}{cc}
																			1 & \frac{  r_p(k) e^{-2it\theta(k)} }{1-|r_p|^2}\\
																			0& 1
																		\end{array}\right),\label{V_Nc}
																	\end{align}	
where  (\ref{V1}) is used to the  case I  and  (\ref{V_Nc})  to the  case II.
																	
																	From the expression of $\theta(k)$, the saddle points can be calculated as
$$\kappa_0^2(\xi)=\frac{\sqrt{1+4\xi}-1-\xi}{4\xi}<C.$$

																	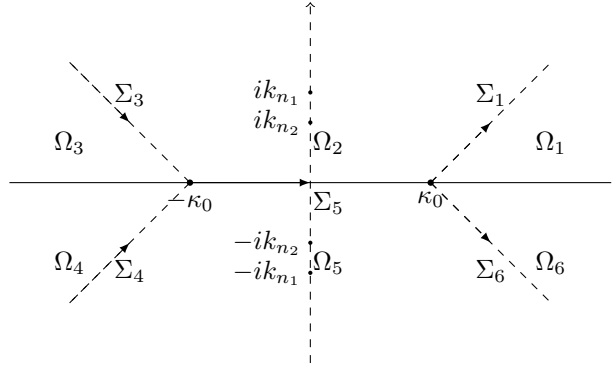
\begin{figure}
																		\centering
																		\begin{tikzpicture}[scale=0.8]
																			\draw[-](-2,0)--(2,0);
																			\draw[dashed,->](0,-3)--(0,3);
																			\draw[-](-5,0)--(-2,0);
																			\draw[-](2,0)--(5,0);
																			\coordinate (a) at (2,0);
																			\fill (a) circle (1.5pt) node[below] {$\kappa_0$};
																			\coordinate (b) at (-2,0);
																			\fill (b) circle (1.5pt) node[below] {$-\kappa_0$};
																			\draw[dashed,-](-2,0)--(-4,2);
																			\draw[dashed,-latex](-4,2)--(-3,1);
																			\draw[dashed,-](-2,0)--(-4,-2);
																			\draw[dashed,-latex](-4,-2)--(-3,-1);
																			\draw[dashed,-](2,0)--(4,-2);
																			\draw[dashed,-latex](2,0)--(3,-1);
																			\draw[dashed,-](2,0)--(4,2);
																			\draw[dashed,-latex](2,0)--(3,1);
																			\draw[-latex](-2,0)--(0,0);
																			\coordinate (c) at (0.3,1);
																			\fill (c) circle (0pt) node[below] {$\Omega_{2}$};
																			\coordinate (d) at (0.3,0);
																			\fill (d) circle (0pt) node[below] {$\Sigma_5$};
																			\coordinate (c) at (0.3,-1);
																			\fill (c) circle (0pt) node[below] {$\Omega_{5}$};
																			\coordinate (d) at (4,1);
																			\fill (d) circle (0pt) node[below] {$\Omega_{1}$};
																			\coordinate (e) at (4,-1);
																			\fill (e) circle (0pt) node[below] {$\Omega_{6}$};
																			\coordinate (f) at (-4,1);
																			\fill (f) circle (0pt) node[below] {$\Omega_{3}$};
																			\coordinate (g) at (-4,-1);
																			\fill (g) circle (0pt) node[below] {$\Omega_{4}$};
																			\coordinate (h) at (3,1.8);
																			\fill (h) circle (0pt) node[below] {$\Sigma_1$};
																			\coordinate (i) at (-3,1.8);
																			\fill (i) circle (0pt) node[below] {$\Sigma_3$};
																			\coordinate (j) at (3,-1.8);
																			\fill (j) circle (0pt) node[above] {$\Sigma_6$};
																			\coordinate (k) at (-3,-1.8);
																			\fill (k) circle (0pt) node[above] {$\Sigma_4$};
																			\coordinate (l) at (0,1.5);
																			\fill (l) circle (1pt) node[left] {$ik_{n_1}$};
																			\coordinate (m) at (0,1);
																			\fill (m) circle (1pt) node[left] {$ik_{n_2}$};
																			\coordinate (l1) at (0,-1.5);
																			\fill (l1) circle (1pt) node[left] {$-ik_{n_1}$};
																			\coordinate (m1) at (0,-1);
																			\fill (m1) circle (1pt) node[left] {$-ik_{n_2}$};
																		\end{tikzpicture}
																		\caption{ The deformation of the Jump contour }
																		\label{figE1}
																	\end{figure}

																	Introduce some notations to be mentioned below
																	\begin{align}
																		&\Sigma_1=\left\{k=\kappa_0+le^{i\varphi},l\geq0\right\},\ \ \Sigma_3=\left\{k=-\kappa_0+le^{3i\varphi},l\geq0\right\},\nonumber\\
																		&\Sigma_4=\left\{k=-\kappa_0+le^{-3i\varphi},l\geq0\right\},\ \ \Sigma_6=\left\{k=\kappa_0+le^{-i\varphi},l\geq0\right\},\nonumber
																	\end{align}
then we have the following Proposition. 	
																	\begin{Proposition}\label{dbar}
																		There exist four functions $R_j:\mathbb{C} \to \Omega_j$, $j=1,3,4,6$ that satisfy the following boundary conditions:
																		\begin{align}
																			&	R_1(k)=\left\{ \begin{array}{ll}
																				\overline{\tilde{r}_p(\bar{k})},   &\text{as } k\in (\kappa_0,+\infty),\\[12pt]
																				\overline{\tilde{r}_p(\kappa_0)}	, &\text{as }k\in \Sigma_1,\\
																			\end{array}\right.\ \
																			R_3(k)=\left\{ \begin{array}{ll}
																				\overline{\tilde{r}_p(k)},   &\text{as } k\in (-\infty,-\kappa_0),\\[12pt]
																				\overline{\tilde{r}_p(-\kappa_0)}	, &\text{as }k\in \Sigma_3,\\
																			\end{array}\right.\nonumber\\
																			&	R_4(k)=\left\{ \begin{array}{ll}
																				\tilde{r}_p(k),   &\text{as } k\in (-\infty,-\kappa_0),\\[12pt]
																				\tilde{r}_p(-\kappa_0), &\text{as }k\in \Sigma_4,\\
																			\end{array}\right.\ \
																			R_6(k)=\left\{ \begin{array}{ll}
																				\tilde{r}_p(k),   &\text{as } k\in (\kappa_0,+\infty),\\[12pt]
																				\tilde{r}_p(\kappa_0), &\text{as }k\in \Sigma_6,\\
																			\end{array}\right. \nonumber
																		\end{align}
																		where $\tilde{r}_p(k)=r_p(k)(\prod\limits_{j}\frac{k-ik_n}{k+ik_n})^2$ and $R_j$ has the following estimates
																		
																		\begin{align}
																			&	|\bar{\partial}R_j(k)|\lesssim|\tilde{r}_p^{\prime}(|k|)|+|k-\kappa_0|^{-\frac{1}{2}}, \ \ k \in \Omega_j,j=1,3,4,6;\label{dbar1}\\
																			&	|\bar{\partial}R_j(k)|=0,\ \ j=2,5.\label{dbar2}
																		\end{align}
	\end{Proposition}
																		\begin{proof}
																			Let $k-\kappa_0=\rho e^{i\phi}$, thus $\bar{\partial}=\frac{e^{i\phi}}{2}\left(\partial_\rho+i\rho^{-1}\partial_\phi\right)$.
																			Take $R_1$ as an example, it can be constructed as follows to satisfy the boundary conditions
																			\begin{align}
																				R_1(k)=\left(	\overline{\tilde{r}_p(|\bar{k}|)}-\overline{\tilde{r}_p(\kappa_0)}\right)\cos(2\phi)+\overline{\tilde{r}_p(\kappa_0)}.
																			\end{align}
																		The estimates 	(\ref{dbar1})  can be derived by a  simple calculation.
																		\end{proof}
				Further we define
																	\begin{align}
																		R^{(2)}(k)=\left\{\begin{array}{ccc}
																			\left(\begin{array}{cc}
																				1 & 0\\
																				R_{j}(k)e^{2it\theta(k)} & 1
																			\end{array}\right), &k\in \Omega_{j},j=1,3,\\[10pt]
																			\left(\begin{array}{cc}
																				1 & 	R_{j}(k)e^{-2it\theta(k)} \\
																				0& 1
																			\end{array}\right),  &k\in \Omega_{j},j=4,6,\\[10pt]
																			I,&k\in \Omega_{2}\cup\Omega_{5}.
																		\end{array}\right.
																	\end{align}		
 and

															\begin{align}
																		J(k)=\left\{\begin{array}{lll}	\left(\begin{array}{cc}
																				1 & 0\\
																				\frac{c_{n,p}e^{2it\theta(ik_n)}}{k-ik_n} & 1
																			\end{array}\right)	\left(\begin{array}{cc}
																				1 & \frac{k-ik_n}{c_{n,p}e^{2it\theta(ik_n)}}\\
																				\frac{c_{n,p}e^{2it\theta(ik_n)}}{k-ik_n} & 0
																			\end{array}\right)	\left(\begin{array}{cc}
																				\prod\limits_{j}\frac{k-ik_n}{k+ik_n} & 0\\
																				0 & \prod\limits_{j}\frac{k+ik_n}{k-ik_n}
																			\end{array}\right),\ \ k\in \cup\mathcal{D}_n,\\[10pt]
																			\left(\begin{array}{cc}
																				1 & 	\frac{c_{n,p}e^{2it\theta(ik_n)}}{k-ik_n}\\
																				0 & 1
																			\end{array}\right)	\left(\begin{array}{cc}
																				1 &\frac{c_{n,p}e^{2it\theta(ik_n)}}{k+ik_n} \\
																				\frac{k+ik_n}{c_{n,p}e^{2it\theta(ik_n)}}	& 0
																			\end{array}\right)	\left(\begin{array}{cc}
																				\prod\limits_{j}\frac{k-ik_n}{k+ik_n} & 0\\
																				0 & \prod\limits_{j}\frac{k+ik_n}{k-ik_n}
																			\end{array}\right),\ \ k\in \cup\bar{\mathcal{D}}_n,\\[10pt]\\
																			\left(\begin{array}{cc}
																				\prod\limits_{j}\frac{k-ik_n}{k+ik_n} & 0\\
																				0 & \prod\limits_{j}\frac{k+ik_n}{k-ik_n}
																			\end{array}\right),\ \  elsewhere.\\[10pt]
																		\end{array}\right.\nonumber
																	\end{align}

Let $\Sigma^{(2)}=\Sigma_1\cup\Sigma_3\cup\Sigma_4\cup\Sigma_5\cup\Sigma_6$, thus the RHP on $\mathbb{R}$ converted to a mixed RHP on $\Sigma^{(2)}\cup(\cup\mathcal{D}_n)\cup(\cup\bar{\mathcal{D}}_n)$. Make  the following two  matrix transforms
\begin{align}																		
N^{(1)}(k)=N^{J}(k)J(k),\ \ N^{(2)}(k)=N^{(1)}(k)R^{(2)}(k),\label{trans3}
\end{align}
then $N^{(2)}(k)$ satisfies the mixed $\bar\partial$-RH problem  																
							
							\begin{RHP}
								Find a matrix valued function  $ M^{(2)}(k)$ with following properties:
								
								$\blacktriangleright$ Analyticity:  $M^{(2)}(k)$ is continuous in $\mathbb{C}\backslash(\Sigma^{(2)}\cup(\cup\mathcal{D}_n)\cup(\cup\bar{\mathcal{D}}_n))$;
								
								$\blacktriangleright$ Jump condition: $M^{(2)}(k)$ satisfies
								\begin{equation}
									M^{(2)}_{+}(k)=M^{(2)}_{-}(k)V^{(2)}(k),\hspace{0.5cm}k \in \Sigma^{(2)}\cup(\cup\mathcal{D}_n)\cup(\cup\bar{\mathcal{D}}_n),
								\end{equation}
								where 			
\begin{align}
																		V^{(2)}(k)=\left\{\begin{array}{lll}	\left(\begin{array}{cc}
																					1 &0 \\
																					\overline{\tilde{r}_p(\kappa_0)}e^{2it\theta(k)}& 1
																				\end{array}\right)^{-1},\ \ k\in \Sigma_1,\\[10pt]
																				\left(\begin{array}{cc}
																					1 & 0\\
																					\overline{\tilde{r}_p(-\kappa_0)}e^{2it\theta(k)} & 1
																				\end{array}\right)^{-1},\ \ k\in \Sigma_3,\\[10pt]
																			\left(\begin{array}{cc}
																					1-|\tilde{r}_p(k)|^2	 & \tilde{r}_p(k)e^{-2it\theta}\\
																					-\overline{\tilde{r}_p(k)}e^{2it\theta}&  1
																				\end{array}\right),\ \  k\in [-k_0, k_0],\\[10pt]\\
 \left(\begin{array}{cc}
																					1 & \tilde{r}_p(-\kappa_0)e^{-2it\theta(k)}\\
																					0 & 1
																				\end{array}\right),\ \ k\in \Sigma_4, \\[10pt]
 \left(\begin{array}{cc}
																					1 & 	\tilde{r}_p(\kappa_0)e^{-2it\theta(k)} \\
																					0& 1
																				\end{array}\right), \ \ k\in \Sigma_6,
																		\end{array}\right.\nonumber
																	\end{align}
see  Figure \ref{figN2}.

								$\blacktriangleright$ $\bar{\partial}$-Derivative: For $k\in\mathbb{C}$
								we have
								\begin{align}
									\bar{\partial}M^{(2)}(k)=M^{(2)}(k)\bar{\partial}R^{(2)}(k),
								\end{align}
								where
								\begin{equation}
									\bar{\partial}R^{(2)}(k,\xi)=\left\{\begin{array}{lll}
										\left(\begin{array}{cc}
											0 & 0\\
											\bar{\partial}R_j(k,\xi)e^{2it\theta} & 0
										\end{array}\right), &  k\in \Omega_j,j=1,3;\\
										\\
										\left(\begin{array}{cc}
											0 & \bar{\partial}R_j(k,\xi)e^{-2it\theta}\\
											0 & 0
										\end{array}\right),  & k\in \Omega_j,j=4,6;\\
										\\
										0,  &elsewhere;\\
									\end{array}\right.\label{DBARR1}
								\end{equation}

							\end{RHP}

																	\begin{figure}
																		\centering
																		\begin{tikzpicture}[scale=0.8]
																			\draw[blue,-](-2,0)--(2,0);
																			\draw[dashed,->](0,-3)--(0,3);
																			\draw[dashed,-](-6,0)--(-2,0);
																			\draw[dashed,-](2,0)--(6,0);
																			\coordinate (a) at (2,0);
																			\fill (a) circle (1.5pt) node[below] {$\kappa_0$};
																			\coordinate (b) at (-2,0);
																			\fill (b) circle (1.5pt) node[below] {$-\kappa_0$};
																			\draw[blue,-](-2,0)--(-4,2);
																			\draw[blue,-latex](-4,2)--(-3,1);
																			\draw[blue,-](-2,0)--(-4,-2);
																			\draw[blue,-latex](-4,-2)--(-3,-1);
																			\draw[blue,-](2,0)--(4,-2);
																			\draw[blue,-latex](2,0)--(3,-1);
																			\draw[blue,-](2,0)--(4,2);
																			\draw[blue,-latex](2,0)--(3,1);
																			\draw[blue,-latex](-2,0)--(0,0);
																			\coordinate (c) at (0,2);
																			\fill (c) circle (0pt) node[below] {$\Omega_{2}$};
																			\coordinate (d) at (0,0);
																			\fill (d) circle (0pt) node[below] {$\Sigma_5$};
																			\coordinate (c) at (0,-2);
																			\fill (c) circle (0pt) node[above] {$\Omega_{5}$};
																			\coordinate (d) at (4,1);
																			\fill (d) circle (0pt) node[below] {$\Omega_{1}$};
																			\coordinate (e) at (4,-1);
																			\fill (e) circle (0pt) node[below] {$\Omega_{6}$};
																			\coordinate (f) at (-4,1);
																			\fill (f) circle (0pt) node[below] {$\Omega_{3}$};
																			\coordinate (g) at (-4,-1);
																			\fill (g) circle (0pt) node[below] {$\Omega_{4}$};
																			\coordinate (h) at (3,1.8);
																			\fill (h) circle (0pt) node[below] {$\Sigma_1$};
																			\coordinate (i) at (-3,1.8);
																			\fill (i) circle (0pt) node[below] {$\Sigma_3$};
																			\coordinate (j) at (3,-1.8);
																			\fill (j) circle (0pt) node[above] {$\Sigma_6$};
																			\coordinate (k) at (-3,-1.8);
																			\fill (k) circle (0pt) node[above] {$\Sigma_4$};
																			\coordinate (l) at (4,2);
																			\fill (l) circle (0pt) node[right] {};
																			\coordinate (l) at (-4,2);
																			\fill (l) circle (0pt) node[left] {};
																			\coordinate (m) at (-4,-2);
																			\fill (m) circle (0pt) node[left] {};
																			\coordinate (n) at (4,-2);
																			\fill (n) circle (0pt) node[right] {};
																			\coordinate (o) at (0,0);
																			\fill (o) circle (0pt) node[above] {};
																			\coordinate (l) at (0,2.2);
																			\draw[blue] (0,2.2) circle (0.2);
																			\fill (l) circle (1pt) node[left] {$ik_{n_1}$};
																			\coordinate (m) at (0,1.3);
																			\draw[blue] (0,1.3) circle (0.2);
																			\fill (m) circle (1pt) node[left] {$ik_{n_2}$};
																			\coordinate (l1) at (0,-2.2);
																			\draw[blue] (0,-2.2) circle (0.2);
																			\fill (l1) circle (1pt) node[left] {$-ik_{n_1}$};
																			\coordinate (m1) at (0,-1.3);
																			\draw[blue] (0,-1.3) circle (0.2);
																			\fill (m1) circle (1pt) node[left] {$-ik_{n_2}$};
																		\end{tikzpicture}
																		\caption{ The jump contour  of $M^{(2)}(k)$ }
																		\label{figN2}
																	\end{figure}
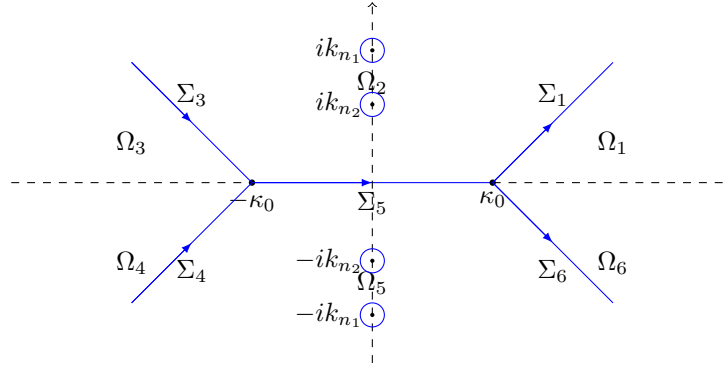
The main contribution to the solution of the hybrid RH problem comes from the $\bar{\partial}$ part and the jump path. For this reason, we have decomposed the hybrid RH problem  as follows:
																	\begin{align}
																		N^{(2)}(k)=N^{(3)}(k)N^{(R)}(k), \label{trans4}
																	\end{align}					
																	where $ N^{(R)}(k)$ is the solution of a pure RH problem, $N^{(3)}(k)$  is the solution of a pure $\bar{\partial}$ problem.
										
																	\subsection{Analysis on pure RH problem}
																	
Noting that  the jump matrices on the circles $D_n$ or $\bar D_n$  decay  exponentially to the
  identity matrix as $t \to  \infty$,  it follows that
																	\begin{align}
																		N_R(k)=\tilde{N}_R(k)+\mathcal{O}(e^{-2\rho_0t}),\label{trans5}
																	\end{align}	
where   $\tilde{N}_R(k)$  satisfies the following RH problem			
																	\begin{RHP}\label{rhp14}
																		Find a matrix-valued function $\tilde{N}_R(k)$ with following properties:
																		
																		$\blacktriangleright$ Analyticity: $\tilde{N}_R(k)$is analytical  in $\mathbb{C}\setminus  \Sigma^{(2)} $;
																		
																		$\blacktriangleright$ Asymptotic behaviors:
																		\begin{align}
																			&\tilde{N}_R(k) \to I,\hspace{0.5cm}|k| \rightarrow \infty;
																		\end{align}

																		$\blacktriangleright$ Jump condition: $\tilde{N}_R(k)$   satisfies the jump relation
																		$$\tilde{N}_{R,+}(k)=\tilde{N}_{R,-}(k)V^{(2)}(k).$$
																	\end{RHP}				
	We make some approximations to $\tilde{N}_R(k)$  to match the solvable Painlev\'e RH model.
By   the Taylor expansion of $\theta(k)$,   we  rewrite
 $$t\theta(k)=\frac{4}{3}\hat{k}^3+\tilde{s}\hat{k}+\mathcal{O}(\hat{k}^{\alpha}t^{-\beta}),$$
    where   $\hat{k}=(6t)^{\frac{1}{3}}k$, $\tilde{s}=6^{-\frac{1}{3}}(\xi-2)t^{\frac{2}{3}}$.
																	
																	\begin{Proposition}\label{bijin}
																		The elements in the jump matrix can be approximated in the following way
																		\begin{align}
																			&	|\overline{\tilde{r}_p(\hat{k}(6t)^{-\frac{1}{3}})}e^{2it\theta(\hat{k}))}-\overline{r_p(0)}e^{i(\frac{8}{3}\hat{k}^3+2\tilde{s}\hat{k})}|\leq ct^{-\frac{1}{6}},& \hspace{0.1cm} \hat{k}\in \hat{\Sigma}_5,\label{bijin1}\\
																			&	|\overline{\tilde{r}_p(\pm\hat{\kappa}_0)}e^{2it\theta(\hat{k}))}-\overline{r_p(0)}e^{i(\frac{8}{3}\hat{k}^3+2\tilde{s}\hat{k})}|\leq ct^{-\frac{1}{6}},&  \hat{k}\in \hat{\Sigma}_j,\label{bijin2}
																		\end{align}
																		where j=1,6 take "+" in (\ref{bijin1}), j=3,4 take "-" in (\ref{bijin2}).
																		\begin{proof}
																			Consider $S(\hat{k})=|t\theta(k)-\frac{4}{3}\hat{k}^3+\tilde{s}\hat{k}|$. Doing a Taylor expansion on $\theta(k)$ at $k=0$ and a series summation on $S(\hat{k})$, a simple estimate gives $S(\hat{k})\lesssim \mathcal{O}(t^{-\frac{2}{3}})$. Let $\nu=|2iS(\hat{k})|$, then $\nu \to 0$
																			as $t \to \infty$. For a fixed constant $\gamma$, $|\nu|\leq \gamma$ when $t$ is large enough. Notice
																			\begin{align}
																				|e^{2it\theta(\hat{k})}-e^{2it(\frac{4}{3}\hat{k}^3+\tilde{s}\hat{k})}|=|e^{2iS(\hat{k})}-1|\leq e^{|2iS(\hat{k})|}-1\leq \frac{e^{\gamma}-1}{\gamma}|\nu|\leq ct^{-\frac{2}{3}}.
																			\end{align}
																			When  $\hat{k} \in \hat{\Sigma}_5$, $ |e^{2it\theta(\hat{k})}|=1$, thus
																			\begin{align}
																				 &|\overline{\tilde{r}_p(\hat{k}(6t)^{-\frac{1}{3}})}e^{2it\theta(\hat{k}))}-\overline{r_p(0)}e^{i(\frac{8}{3}\hat{k}^3+2\tilde{s}\hat{k})}|\nonumber\\
																				&\leq |\overline{\tilde{r}_p(\hat{k}(6t)^{-\frac{1}{3}})}-\overline{r_p(0)}|+|\overline{r_p(0)}||e^{2it\theta(\hat{k})}-e^{2it(\frac{4}{3}\hat{k}^3+\tilde{s}\hat{k})}|\nonumber\\
																				&\leq ||\overline{\tilde{r}_p^{\prime}}||_{L^2}\left(\hat{k}(6t)^{-\frac{1}{3}}\right)^{\frac{1}{2}}+ct^{-\frac{2}{3}}
																				\leq ct^{-\frac{1}{6}}.
																			\end{align}
Let $\hat{k}=\hat{\kappa}_0+u+iv$, then $Re\left(i(\frac{8}{3}\hat{k}^3+2\tilde{s}\hat{k})\right)\leq -\frac{16}{3}u^2v\leq 0$, so $|e^{i(\frac{8}{3}\hat{k}^3+2\tilde{s}\hat{k})}|$ is bounded. When $\hat{k} \in \hat{\Sigma}_1$,
\begin{align}
&|\overline{\tilde{r}_p(\hat{\kappa}_0)}e^{2it\theta(\hat{k}))}-\overline{r_p(0)}e^{i(\frac{8}{3}\hat{k}^3+2\tilde{s}\hat{k})}|\nonumber\\
																				 &\leq|\overline{\tilde{r}_p(\hat{\kappa}_0)}-\overline{r_p(0)}||e^{i(\frac{8}{3}\hat{k}^3+2\tilde{s}\hat{k})}|+|\overline{\tilde{r}_p(\hat{\kappa}_0)}||e^{2it\theta(\hat{k})}-e^{2it(\frac{4}{3}\hat{k}^3+\tilde{s}\hat{k})}|\nonumber\\
&\leq c||\overline{\tilde{r}^{\prime}_p}||_{L^2}|\hat{\kappa}_0|^{\frac{1}{2}}+ct^{-\frac{2}{3}}\leq ct^{-\frac{1}{6}}.
\end{align}
The other results in (\ref{bijin2}) can be proved in similar way.
																		\end{proof}
																	\end{Proposition}				
																	
																	By Proposition \ref{bijin},  as $t \to \infty$, we claim that the solution of RHP \ref{rhp14} can be approximated by the solution of the limit model
																	\begin{align}
																		\tilde{N}_R(k)=\hat{N}_R(\hat{k})+\mathcal{O}(t^{-\frac{1}{6}}), \label{trans6}
																	\end{align}
																	where $\hat{N}_R(\hat{k})$ is the solution of  the following limit model RH problem	
																	\begin{RHP}\label{rhp15}
																		Find a matrix-valued function $\hat{N}_R(\hat{k})$ with following properties:
																		
																		$\blacktriangleright$ Analyticity: $\hat{N}_R(\hat{k})$is analytical  in $\mathbb{C}\setminus  \Sigma^{(2)} $;
																		
																		$\blacktriangleright$ Asymptotic behaviors:
																		\begin{align}
																			&\hat{N}_R(\hat{k}) \to I,\hspace{0.5cm}|\hat{k}| \rightarrow \infty;\nonumber
																		\end{align}

																		$\blacktriangleright$ Jump condition: $\hat{N}_R(\hat{k})$ has continuous boundary values $\hat{N}_{R,\pm}(\hat{k})$ on $\Sigma^{(2)}$ satisfying
																		$$\hat{N}_{R,+}(\hat{k})=\hat{N}_{R,-}(\hat{k})\widehat{V}^{(2)}(\hat{k}),$$
																		where
																		\begin{align}
																			\widehat{V}^{(2)}(\hat{k})=\left\{\begin{array}{lll}
																				&	\left(\begin{array}{cc}
																					1 & 0 \\
																					r_p(0)e^{\frac{8}{3} i \hat{k}^3+2 i \tilde{s}\hat{k}}  & 1
																				\end{array}\right)^{-1}:=b_{+}^{-1}, \quad \hat{k} \in \hat{\Sigma}_1 \cup \hat{\Sigma}_3,\\
																				&\left(\begin{array}{cc}
																					1 & 	r_p(0)e^{-\frac{8}{3} i \hat{k}^3-2 i \tilde{s}\hat{k}}  \\
																					0 & 1
																				\end{array}\right):=b_{-},\quad \hat{k} \in \hat{\Sigma}_4 \cup \hat{\Sigma}_6,\\
																				&	\left(\begin{array}{cc}
																					1 &  r_p(0)e^{-\frac{8}{3} i \hat{k}^3-2 i \tilde{s}\hat{k}} \\
																					0 & 1
																				\end{array}\right)\left(\begin{array}{cc}
																					1 & 0 \\
																					r_p(0) e^{\frac{8}{3} i \hat{k}^3+2 i \tilde{s}\hat{k}} & 1
																				\end{array}\right)^{-1}:=b_{-} b_{+}^{-1}, \quad \hat{k} \in \hat{\Sigma}_5,\\
																			\end{array}\right. \nonumber
																		\end{align}		
																	\end{RHP}				
Let $r_p(0)=|r_p(0)|e^{2i\phi}	$, where   $2\phi$ is the auxiliary angle of $r_p(0)$.
Introduce a matrix function
																	\begin{align}
																		P(\hat{k})=\left\{\begin{array}{cc}
																			b_{+}, & \hat{k} \in \Omega_2 \cup \Omega_4, \\
																			b_{-}^{-1}, & \hat{k} \in \Omega_6 \cup \Omega_8, \\
																			I, & \hat{k} \in \Omega_1 \cup \Omega_3 \cup \Omega_5 \cup \Omega_7,
																		\end{array}\right.
																	\end{align}		
																	
																	\begin{figure}
																		\centering
																		\begin{tikzpicture}[scale=0.8]
																			\draw[dashed,-](-1,0)--(1,0);
																			\coordinate (a) at (2,0);
																			\fill (a) circle (1.5pt) node[below] {$\kappa_0$};
																			\coordinate (b) at (-2,0);
																			\fill (b) circle (1.5pt) node[below] {$-\kappa_0$};
																			\draw[dashed,-](-2,0)--(-4,2);
																			\draw[dashed,-latex](-4,2)--(-3,1);
																			\draw[dashed,-](-2,0)--(-4,-2);
																			\draw[dashed,-latex](-4,-2)--(-3,-1);
																			\draw[dashed,-](2,0)--(4,-2);
																			\draw[dashed,-latex](2,0)--(3,-1);
																			\draw[dashed,-](2,0)--(4,2);
																			\draw[dashed,-latex](2,0)--(3,1);
																			\draw[dashed,-latex](-2,0)--(-1,0);
																			\draw[dashed,-](-1,0)--(1,0);
																			\draw[dashed,-latex](0,0)--(1,0);
																			\draw[dashed,-](1,0)--(2,0);
																			\draw[-latex](-2,2)--(-1,1);
																			\draw[-](-1,1)--(0,0);
																			\draw[-latex](0,0)--(1,1);
																			\draw[-](0,0)--(2,2);
																			\draw[-latex](-2,-2)--(-1,-1);
																			\draw[-](-1,-1)--(0,0);
																			\draw[-latex](0,0)--(1,-1);
																			\draw[-](2,-2)--(0,0);
																			\coordinate (c) at (0,1.5);
																			\fill (c) circle (0pt) node[below] {$\Omega_{3}$};
																			\coordinate (d) at (0,-1.5);
																			\fill (d) circle (0pt) node[above] {$\Omega_{7}$};
																			\coordinate (d) at (3,0);
																			\fill (d) circle (0pt) node[left] {$\Omega_{1}$};
																			\coordinate (d1) at (-3,0);
																			\fill (d1) circle (0pt) node[right] {$\Omega_{5}$};
																			\coordinate (e) at (4,2);
																			\fill (e) circle (0pt) node[right] {$\hat{\Sigma}_{1}$};
																			\coordinate (e1) at (-4,2);
																			\fill (e1) circle (0pt) node[left] {$\hat{\Sigma}_{3}$};
																			\coordinate (e2) at (-4,-2);
																			\fill (e2) circle (0pt) node[left] {$\hat{\Sigma}_{4}$};
																			\coordinate (e3) at (4,-2);
																			\fill (e3) circle (0pt) node[right] {$\hat{\Sigma}_{6}$};
																			\coordinate (f) at (2,2);
																			\fill (f) circle (0pt) node[right] {$\Sigma^*_{1}$};
																			\coordinate (f1) at (-2,2);
																			\fill (f1) circle (0pt) node[left] {$\Sigma^*_{3}$};
																			\coordinate (f2) at (-2,-2);
																			\fill (f2) circle (0pt) node[left] {$\Sigma^*_{4}$};
																			\coordinate (f3) at (2,-2);
																			\fill (f3) circle (0pt) node[right] {$\Sigma^*_{6}$};
																			\coordinate (l) at (0.4,0.2);
																			\fill (l) circle (0pt) node[right] {$\hat{\Sigma}_{5}$};
																			\coordinate (l1) at (-0.4,0.2);
																			\fill (l1) circle (0pt) node[left] {$\hat{\Sigma}_{5}$};
																			\coordinate (g) at (2,1);
																			\fill (g) circle (0pt) node[left] {$\Omega_{2}$};
																			\coordinate (g1) at (2,-1);
																			\fill (g1) circle (0pt) node[left] {$\Omega_{8}$};
																			\coordinate (g2) at (-2,-1);
																			\fill (g2) circle (0pt) node[right] {$\Omega_{6}$};
																			\coordinate (g3) at (-2,1);
																			\fill (g3) circle (0pt) node[right] {$\Omega_{4}$};
																		\end{tikzpicture}
																		\caption{ Jump path deformation to a standard painlesv\'e jump }
																		\label{figpainleve}
																	\end{figure}
Making a transformation
\begin{align}
N^P(\hat{k})=e^{i(\phi-\frac{\pi}{4})}\hat{N}_R(\hat{k})P(\hat{k})e^{-i(\phi-\frac{\pi}{4})}, \label{trans7}
\end{align}																		
then $N^P(\hat{k})$  satisfies the following model Painlev\'e  RH problem (  Figure \ref{figpainleve})

\begin{RHP}\label{painleve rhp}
Find a matrix-valued function $N^P(\hat{k})$ with following properties:

$\blacktriangleright$ Analyticity: $N^P(\hat{k})$is analytical  in $\mathbb{C}\setminus  \Sigma^{*} $, \ \  $\Sigma^{*}=\Sigma^*_{1}\cup\Sigma^*_{3}\cup\Sigma^*_{4}\cup\Sigma^*_{6}$;
																		
																		$\blacktriangleright$ Asymptotic behaviors:
																		\begin{align}
																			&N^P(\hat{k}) \to I,\hspace{0.5cm}|\hat{k}| \rightarrow \infty;\nonumber
																		\end{align}

																		$\blacktriangleright$ Jump condition: $N^P(\hat{k})$ has continuous boundary values $N^P_{\pm}(\hat{k})$ on $\Sigma^{*}$ satisfying
																		$$N^P_+(\hat{k})=N^P_-(\hat{k})\widehat{V}^{P}(\hat{k}),$$
																		where the jump matrix $\widehat{V}^{P}(\hat{k})=e^{-i(\frac{4}{3} \hat{k}^3+ \tilde{s}\hat{k})\hat{\sigma}_3}V_0^{P}(\hat{k})$,
																		\begin{align}
																			V_0^{P}(\hat{k})=\left\{\begin{array}{lll}
																				&	\left(\begin{array}{cc}
																					1 & 0 \\
																					i|r_p(0)| & 1
																				\end{array}\right), \quad \hat{k} =\zeta e^{\frac{i\pi}{4}},\zeta>0,\\
																				&\left(\begin{array}{cc}
																					1 & 		-i|r_p(0)|   \\
																					0 & 1
																				\end{array}\right),\quad \hat{k} =\zeta e^{-\frac{i\pi}{4}},\zeta>0,\\
																				&	\left(\begin{array}{cc}
																					1 & 		0  \\
																					-i|r_p(0)|  & 1
																				\end{array}\right),\quad \hat{k} =\zeta e^{\frac{3i\pi}{4}},\zeta>0,\\
																				&\left(\begin{array}{cc}
																					1 & 	i|r_p(0)|   \\
																					0 & 1
																				\end{array}\right),\quad \hat{k} =\zeta e^{-\frac{3i\pi}{4}},\zeta>0.\\
																			\end{array}\right.
																		\end{align}		
																	\end{RHP}				
The solution of RHP \ref{painleve rhp} 	corresponds to the real-valued solution of Painlev\'e II equation, which has the following expansion
																	$$
																	N^P(\hat{k} ;\tilde{ s})=\frac{N_1^P(\tilde{ s})}{\hat{k}}+\frac{N_2^P(\tilde{ s})}{\hat{k}^2}+...
																	$$
																	When $\hat{k} \to \infty$, $N_1^P(\tilde{ s})$, $N_2^P(\tilde{ s})$ can be represented by the solution of Painlev\'e II equation
																	\begin{align}
																		&	N_1^P(\tilde{s})=\frac{1}{2}\left(\begin{array}{cc}
																			i \int^s v^2(\xi) d \xi & v(\tilde{s}) \\
																			v(\tilde{s}) & -i \int^{\tilde{s}} v^2(\xi) d \xi
																		\end{array}\right),\nonumber\\
																		&	N_2^P(\tilde{ s})=\left(\begin{array}{cc}
																			\xi & \eta \\
																			-\eta & \xi
																		\end{array}\right), \nonumber
																	\end{align}			
																	where $v$ is the solution of Painlev\'e II equation: $v^{\prime \prime}(\tilde{s})=\tilde{s}v(s\tilde{s})+2 v^3(\tilde{s})$, and			
																	\begin{equation}
																		\xi(\tilde{s})=-\frac{1}{8}\left(\left(\int^{\tilde{s}} v^2(\xi) d \xi\right)^2-v^2\right),\nonumber
																	\end{equation}
																	\begin{equation}
																		\eta(\tilde{s})=\frac{i}{4}\left(-v(\tilde{s}) \int^{\tilde{s}} v^2(\xi) d \xi+v^2\right). \nonumber
																	\end{equation}
																	
\subsection{Estimate on the $\bar{\partial}$-part}				

We only give the main results,  which can shown in a  similar way  to Section \ref{subsec3.3}.
Define
			\begin{align}
																		N^{(3)}(\hat{k})=N^{(2)}(\hat{k})(N^{(R)}(\hat{k}))^{-1},
																	\end{align}
																	then $N^{(3)}(\hat{k})$ satisfies a pure $\bar{\partial}$ problem
			 and has the following expansion
																	\begin{equation}
																		N^{(3)}(\hat{k})=I+\frac{N_1^{(3)}}{\hat{k}}+\mathcal{O}\left(\hat{k}^{-2}\right), \label{trans8}
																	\end{equation}
																where $N_1^{(3)}$ has following estimation
																		\begin{equation}
																			|N_1^{(3)}|\leq ct^{-\frac{1}{3}}.
																		\end{equation}

																	\subsection{The Painlev\'e asymtotics}

																	\begin{theorem}  Under the same condition with Theorem \ref{last}, the $u(x,t)$ of the Cauchy problem
(\ref{ch})-(\ref{initial}) admits   the following Painlev\'e asymptotics  in different regions\\
				\noindent  1. For $0<|\xi-2|t^{\frac{2}{3}}<C$,
				\begin{align}
	u(y,t)& =-\left(\frac{4}{3}\right)^{2 / 3} \frac{1}{t^{2 / 3}}\left(v^2(\tilde{s})-v_{\tilde{s}}(\tilde{s})\right)+\mathrm{O}\left(t^{-\frac{1}{2}}\right),\label{up}
																	\end{align}
		where $v(s)$ is the real-valued  solution of the Painlev\'e II equation , as $s \to \infty$,
						\begin{equation}
				v(s) \sim-r_p(0) \operatorname{Ai}(s) \sim-r_p(0) \frac{1}{2 \sqrt{\pi}} s^{-\frac{1}{4}} \mathrm{e}^{-\frac{2}{3}s^{3 / 2}}. \nonumber
		\end{equation}
	\noindent  2. For $0<|\xi+\frac{1}{4}|t^{\frac{2}{3}}<C$,
\begin{align}
u(y, t) =\frac{12^{1 / 6}}{t^{1 / 3}} v_1\left(s_1\right) \sin \psi\left(s_1, t\right)+\mathrm{O}\left(t^{-1/ 2}\right),\label{pu}
																	\end{align}
																	where $v_1(s)$ is the real-valued, nonsingular solution of the Painlev\'e II equation, as $s \to \infty$,
	\begin{align}
&v_1(s) \sim\left|\tilde{r}_p\left(\frac{\sqrt{3}}{2}\right)\right| \frac{1}{2 \sqrt{\pi}} s^{-\frac{1}{4}} \mathrm{e}^{-\frac{2}{3} s^{3 / 2}}, \nonumber\\
&s_1=-\left(\frac{16}{3}\right)^{1 / 3}\left(\xi+\frac{1}{4}\right) t^{2 / 3}, \ \
																	\psi\left(s_1, t\right)=-\frac{3 \sqrt{3}}{4} t-\frac{3^{5 / 6}}{2^{4 / 3}} s_1 t^{1 / 3}+\Delta,\nonumber
	\end{align}
with
$$
																	\begin{aligned}
																		\Delta= & -\frac{4 \sqrt{3}}{\pi} \int_0^{\infty} \frac{\log \left(1-|\tilde{r}_p(\xi)|^2\right)}{1+4 \xi^2} \mathrm{~d} \xi+\arg \tilde{r}_p\left(\frac{\sqrt{3}}{2}\right) \\
																		& +\frac{1}{\pi} \int_{-\infty}^{\infty} \frac{\log \left(1-|\tilde{r}_p/(\xi)|^2\right)}{\xi-\frac{\sqrt{3}}{2}} \mathrm{~d} \xi-4 \sum_{n=1}^N \arg \left(\frac{\sqrt{3}}{2}+\mathrm{i} k_n\right) \\
																		& -2 \sqrt{3} \sum_{n=1}^N \log \left(\frac{1+2 k_n}{1-2 k_n}\right).
																	\end{aligned}
$$
																		
																	\end{theorem}
\begin{proof}
Recalling a  series of transformations (\ref{trans1}), (\ref{trans2}), (\ref{trans3}), (\ref{trans4}), (\ref{trans5}), (\ref{trans6}), (\ref{trans7}) and  (\ref{trans7}),
the reconstruction formula  (\ref{newu})  leads to  the asymptotic expansion  (\ref{up}).
  The asymptotic expansion (\ref{pu}) can be obtained in a similar way.

\end{proof}

\vspace{2mm}

\noindent\textbf{Acknowledgements}

This work is supported by  the National Science
Foundation of China (Grant No. 12271104,  51879045).\vspace{2mm}

    \noindent\textbf{Data Availability Statements}

    The data which supports the findings of this study is available within the article.\vspace{2mm}

    \noindent{\bf Conflict of Interest}

    The authors have no conflicts to disclose.
																	\hspace*{\parindent}
																	\\
																	
																\end{document}